\def\res{\on{res}}
\def\Gres{\on{res}^G}
\def\Kos{\operatorname{Kos}}
\def\Thick{\operatorname{Thick}}
\def\tstar{{\tilde\star}}
\def\del{\partial}
\def\cube#1#2#3#4#5#6#7#8{
& #5 \ar[rr] \ar[dl] \ar@{-}[d] && #6 \ar[dd] \ar[dl] \\
#1 \ar[rr] \ar[dd]  & \ar[d] & #2 \ar[dd] \\
& #7 \ar@{-}[r] \ar[dl] & \ar[r] & #8 \ar[dl] \\
#3 \ar[rr] && #4 \\
}
\def\S{\Sigma}
\def\smsh{\wedge}
\def\cE{\mathcal E}
\def\cA{\mathcal A}
\def\cB{\mathcal B}
\def\cC{\mathcal C}
\def\cD{\mathcal D}
\def\dm{\operatorname{dim}}
\def\Sing{\operatorname{Sing}}
\def\sExt{\sExt}
\def\End{\operatorname{End}}
\def\Spec{\operatorname{Spec}}
\def\bu{\bullet}
\def\into{\hookrightarrow}
\def\onto{\twoheadrightarrow}
\def\a{\alpha}
\def\b{\beta}
\def\e{\varepsilon}
\newcommand{\A}{\mathbb{A}}
\newcommand{\G}{\mathbb{G}}
\newcommand{\R}{{\mathbb{R}}}
\newcommand{\C}{\mathbb{C}}
\newcommand{\Z}{\mathbb{Z}}
\newcommand{\fp}{{\mathfrak p}}
\newcommand{\fm}{{\mathfrak m}}
\numberwithin{equation}{section}
\theoremstyle{plain} %% This is the default, anyway
\newtheorem{thm}[equation]{Theorem}
\newtheorem{thm-conj}[equation]{Theorem-Conjecture}
\newtheorem{defn-conj}[equation]{Definition-Conjecture}
\newtheorem*{introthm*}{Theorem}
\newtheorem{cor}[equation]{Corollary}
\newtheorem{lem}[equation]{Lemma}
\newtheorem{prop}[equation]{Proposition}
\newtheorem{conj}[equation]{Conjecture}
\theoremstyle{definition}
\newtheorem{defn}[equation]{Definition}
\theoremstyle{remark}
\newtheorem{rem}[equation]{Remark}
\def\Perf{\operatorname{Perf}}
\newcommand{\Hom}{\operatorname{Hom}}
\newcommand{\supp}{\operatorname{supp}}
\newcommand{\xra}[1]{\xrightarrow{#1}}
\newcommand{\xla}[1]{\xleftarrow{#1}}
\newcommand{\id}{\operatorname{id}}
\newcommand{\odd}{\operatorname{odd}}
\def\L{\Lambda}
\def\ev{{\mathrm{even}}}
\def\odd{{\mathrm{odd}}}
\def\t{\theta}
\def\s{\sigma}
\def\d{\delta}
\def\tr{\operatorname{tr}}
\def\trace{\operatorname{trace}}
\def\str{\operatorname{str}}
\def\n{\nabla}
\def\and{ \text{ and } }
\def\can{\mathrm{can}}
\def\Der{\operatorname{Der}}
\def\op{{\mathrm{op}}}
\def\G{\Gamma}
\def\nat{\natural}
\def\on{\operatorname}
\def\Ob{\on{Ob}}
\def\Fold{\on{Fold}}
\begin{document}

\begin{abstract}
We prove a conjecture of Shklyarov concerning the relationship between K. Saito's higher residue pairing and a certain pairing on the periodic cyclic homology of matrix factorization categories. Along the way, we give new proofs of a result of Shklyarov (\cite[Corollary 2]{Shklyarov:higherresidues}) and Polishchuk-Vaintrob's Hirzebruch-Riemann-Roch formula for matrix factorizations (\cite[Theorem 4.1.4(i)]{PV}).
\end{abstract} 

\title{A proof of a conjecture of Shklyarov}

\author{Michael K. Brown}
\address{Department of Mathematics, University of Wisconsin-Madison, WI 53706-1388, USA}
\email{mkbrown5@wisc.edu}

\author{Mark E. Walker}
\address{Department of Mathematics, University of Nebraska-Lincoln, NE 68588-0130, USA}
\email{mark.walker@unl.edu}

\thanks{MB gratefully acknowledges support from the National Science Foundation (award DMS-1502553), and MW
gratefully acknowledges support from the Simons Foundation (grant \#318705) and the National Science Foundation (award DMS-1901848).} 
\maketitle

\tableofcontents

%%%%%%%%%%%
\section{Introduction}
%%%%%%%%%%%
Let $Q = \C[x_1, \dots, x_n]$, and let $\fm$ denote the maximal ideal $(x_1, \dots, x_n) \subseteq Q$. 
Fix $f \in \fm$, and assume the only singular point of the associated morphism $f : \Spec(Q) \to \A^1_\C$ is $\fm$. 
Let $mf(Q, f)$
denote the differential $\Z/2$-graded category of matrix factorizations of $f$; see Section \ref{mf} for the definition of $mf(Q, f)$.
Shklyarov proves in \cite[Theorem 1]{Shklyarov:higherresidues} that a certain pairing on the periodic cyclic homology of $mf(Q, f)$
coincides, up to a constant factor $c_f$ (which possibly depends on $f$), with K.~Saito's higher residue pairing, via the
Hochschild-Kostant-Rosenberg (HKR) isomorphism. Shklyarov conjectures in
\cite[Conjecture 3] {Shklyarov:higherresidues} that $c_f = (-1)^{\frac{n(n+1)}{2}}$. The main goal of this paper is to prove this conjecture.  

We begin by discussing Shklyarov's conjecture in more detail.

%%%%%%%%%%%%%%%%%%%%%%%%%%%%
\subsection{Background on Shklyarov's conjecture}
%%%%%%%%%%%%%%%%%%%%%%%%%%%%
Let $HN(mf(Q, f))$ denote the negative cyclic complex of $mf(Q, f)$, and let $HN_*(mf(Q, f))$ denote its homology. See, for instance, \cite[Section 3]{BW} for the definition of the negative cyclic complex of a dg-category. The dg-category $mf(Q,f)$ is \emph{proper}, i.e. each cohomology group of the ($\Z/2$-graded) morphism complex of any two objects is a
finite dimensional $\C$-vector space. As with any such dg-category, there is a canonical pairing of $\Z/2$-graded $\C$-vector spaces 
$$
K_{mf} : HN_*(mf(Q,f)) \times HN_*(mf(Q,f)) \to \C[[u]],
$$
where $u$ is an even degree variable. The pairing $K_{mf}$ is defined exactly as in \cite[page 184]{Shklyarov:higherresidues}, but with periodic cyclic homology $HP_*$ replaced with $HN_*$ and $\C((u))$ replaced with $\C[[u]]$.  We note that $K_{mf}$ is \emph{$\C[[u]]$-sesquilinear}; that is, for any $\a, \b \in HN_*(mf(Q,f))$ and $g \in \C[[u]]$, we have
$$
K_{mf}( g(u) \cdot \a, \b ) = g(u) K_{mf}( \a, \b) = K_{mf}( \a , g(-u) \cdot \b).
$$

It follows from work of Segal \cite[Corollary 3.4]{segal} and Polishchuk-Positselski \cite[Section 4.8]{PP} that there is a quasi-isomorphism
\begin{equation}
\label{HKRif}
I_f: HN(mf(Q,f)) \xra{\simeq} (\Omega_{Q/\C}^\bu[[u]], ud - df),
\end{equation}
which generalizes the classical Hochschild-Kostant-Rosenberg (HKR) theorem.
The target of $I_f$ is called the \emph{twisted de Rham complex}, and
it is a $\Z/2$-graded complex indexed by setting $\Omega^m_{Q/\C}$ to have (homological) degree $m$ and $u$ to have degree $-2$.
(Since the twisted de Rham complex is $\Z/2$-graded, 
we could just as well say $\Omega^m$ has degree $-m$ and $u$ has degree $2$. Note that the map $ud$ has
degree $-1$ whereas $df$ has degree $1$, but since this is regarded as a $\Z/2$-graded complex, there is no problem.)
In particular, we have an isomorphism
$$
I_f: HN_n(mf(Q,f)) \xra{\cong} H^{(0)}_f,
$$
where
$$
H_f^{(0)} := H_n(\Omega^\bu_{Q/\C}[[u]], u d - df) = \frac{\Omega^n_{Q/\C}[[u]]}{(u d -df) \cdot  \Omega^{n-1}_{Q/\C}[[u]]}.
$$
In \cite{saito}, K. Saito equips the $\C[[u]]$-module $H_f^{(0)}$ with a pairing
$$
K_f : H^{(0)}_f \times H^{(0)}_f \to \C[[u]]
$$
known as the {\em higher residue pairing}. Shklyarov has proven the following result concerning the relationship between the 
canonical pairing and the higher residue pairing under the HKR isomorphism:

\begin{thm}[\cite{Shklyarov:higherresidues}, Theorem 1] For each polynomial $f$ as above, there is a constant $c_f \in \C$ (possibly depending on $f$) such that the diagram
\begin{equation} \label{E122b}
\xymatrix{
HN_n(mf(Q,f))^{\times 2} \ar[rr]^-{I_f \times I_f}_-\cong \ar[dr]_-{c_{f} \cdot u^n \cdot K_{mf}} &&  \left(H_f^{(0)}\right)^{\times 2} \ar[dl]^-{K_f} \\
& \C[[u]] \\
}
\end{equation}
commutes.
\end{thm}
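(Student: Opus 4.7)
The strategy is to exploit Saito's characterization of the higher residue pairing on the Brieskorn lattice $H_f^{(0)}$ as (essentially) the unique $\C[[u]]$-sesquilinear pairing extending the Grothendieck residue pairing on the Jacobian ring and compatible with the Gauss--Manin connection. This reduces the theorem to a computation of leading coefficients in $u$, at the level of Hochschild homology.

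First, I would transport the scaled canonical pairing $u^n K_{mf}$ to a pairing on $H_f^{(0)} \times H_f^{(0)}$ along the HKR quasi-isomorphism $I_f$ from \eqref{HKRif} and verify that the result is $\C[[u]]$-sesquilinear; this is immediate from the stated sesquilinearity of $K_{mf}$ together with the $\C[[u]]$-linearity of $I_f$. I would also check that this transported pairing is compatible with the Gauss--Manin connection on $H_f^{(0)}$; on the matrix factorization side this should follow from the general fact that the canonical pairing on negative cyclic homology of a smooth proper dg-category intertwines the Getzler connection, which under $I_f$ is identified with the Gauss--Manin connection on the twisted de Rham complex.

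Next, and this is the heart of the argument, I would compute the leading coefficient in $u$ of $u^n K_{mf}$. Setting $u=0$, the pairing $K_{mf}$ specializes to Shklyarov's Mukai pairing on Hochschild homology $HH_*(mf(Q,f))$, and under the HKR identification $HH_*(mf(Q,f)) \simeq H_*(\Omega^\bu_{Q/\C}, -df)$ of Segal and Polishchuk--Positselski, I would rewrite this Mukai pairing as a Grothendieck residue on the Jacobian algebra $Q/(\partial f/\partial x_1,\dots,\partial f/\partial x_n)$. The key input is a Kapustin--Li style trace formula for the Chern character of a matrix factorization, which is essentially the Polishchuk--Vaintrob Hirzebruch--Riemann--Roch formula whose reproof is announced in the abstract. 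The outcome is that the leading coefficient of $u^n K_{mf}$ agrees with the leading coefficient of $K_f$ up to a scalar $c_f$ (a priori depending on $f$).

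Finally, Saito's uniqueness result applied to the two $\C[[u]]$-sesquilinear pairings $K_f$ and $c_f \cdot u^n K_{mf}$ (transported along $I_f$), both of which are compatible with the Gauss--Manin connection and share the same leading term, forces them to agree, yielding the commuting diagram. The main obstacle will be the leading-term computation: reconciling signs and normalization conventions between the cyclic-homological Chern character and Saito's definition of the residue pairing is delicate, and it is precisely this bookkeeping that will produce the explicit value $c_f = (-1)^{n(n+1)/2}$ in the subsequent refinement of this theorem.
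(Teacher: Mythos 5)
The statement you are addressing is cited in the paper as \cite[Theorem 1]{Shklyarov:higherresidues}; the paper does not contain a proof of it, but rather uses it as a known input. The paper's own contribution is to determine the value of the constant $c_f$ (Theorem \ref{introthm1}), and the proof of that determination proceeds via the commutativity of diagram (\ref{outline}), which is the mod-$u$, degree-$n$ shadow of the diagram in the statement. So there is no paper-internal proof for me to compare your proposal against directly.

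Taken on its own terms, your outline (sesquilinearity, Gauss--Manin compatibility, leading-term matching, Saito's uniqueness) is a plausible reconstruction of the shape of Shklyarov's original argument. But two points deserve scrutiny. First, you identify the leading coefficient of $u^n K_{mf}$ with the classical residue pairing via a Hirzebruch--Riemann--Roch style trace formula; in \emph{this} paper, the reproof of Polishchuk--Vaintrob's HRR (Corollary \ref{PVcor}) is deduced \emph{from} the commutativity of diagram (\ref{E122}), i.e.\ from the mod-$u$ version of the statement you are trying to prove. If your leading-term step is to rely on HRR, you must obtain HRR by an independent route (as Polishchuk--Vaintrob do), or the argument is circular. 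Second, even setting that aside, the leading-term computation is the entire substance of the problem: the paper expends Sections \ref{HHofmf}--\ref{traceresidue} establishing the HKR machinery, trace and residue maps, and the multiplicativity and duality compatibilities needed to carry it out. Your proposal compresses this to a gesture toward a ``Kapustin--Li style trace formula.'' You should also note explicitly what uniqueness statement of Saito you are invoking: to promote agreement of leading terms up to a scalar to agreement of the full pairings up to the \emph{same} scalar, you need that the higher residue pairing is rigid among sesquilinear pairings satisfying the Gauss--Manin compatibility, and verifying that the transported $u^n K_{mf}$ actually satisfies the hypotheses of that rigidity theorem is a genuine (if standard) check.
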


Moreover, Shklyarov makes the following prediction:

\begin{conj}[\cite{Shklyarov:higherresidues}, Conjecture 3]
\label{conj:s}
For any $f$, $c_f = (-1)^{n(n+1)/2}$. 
\end{conj}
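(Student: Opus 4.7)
The strategy is to reduce the conjecture to an explicit comparison on a single test pair of classes and then carry that comparison out directly. Since Shklyarov's theorem already establishes
\[
K_f \circ (I_f \times I_f) = c_f \cdot u^n \cdot K_{mf},
\]
and both pairings are $\C[[u]]$-sesquilinear, the constant $c_f$ will be pinned down by evaluating both sides on any pair $(\a, \b) \in HN_n(mf(Q,f))^{\times 2}$ for which at least one side is nonzero. A natural choice is to take $\a$ and $\b$ to be the preimages under $I_f$ of classes in $H_f^{(0)}$ represented by top-degree de Rham forms $g \, dx_1 \wedge \cdots \wedge dx_n$ and $h \, dx_1 \wedge \cdots \wedge dx_n$, with $g, h \in Q$ chosen so that $gh$ represents (up to a scalar) the socle of the Milnor algebra $Q/(\partial f)$; this guarantees that the leading $u$-coefficient of $K_f(\a,\b)$ is a nonzero Grothendieck residue.

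On the Saito side, this leading coefficient is, by the very definition of $K_f$,
\[
\operatorname{Res}_{\fm} \left[ \frac{g h \, dx_1 \wedge \cdots \wedge dx_n}{\partial_1 f, \ldots, \partial_n f} \right],
\]
with Saito's standard normalization. On the categorical side, what is needed is an explicit chain-level formula for the canonical pairing on $HN_*(mf(Q,f))$, and to obtain one I would construct a Koszul matrix factorization resolution of the diagonal $Q \to Q \otimes_\C Q$, realized as the $n$-fold tensor product of rank-one Koszul factorizations in the style of Dyckerhoff-Murfet and Polishchuk-Vaintrob. Transporting the resulting formula along the HKR quasi-isomorphism $I_f$, the categorical trace $HN(mf(Q,f)) \to \C[[u]]$ should become, up to a sign, the $\C[[u]]$-linear extension of the Grothendieck residue on the twisted de Rham complex. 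As the abstract indicates, this same chain-level description simultaneously yields new proofs of Shklyarov's Corollary 2 (the $u=0$ Hochschild-level comparison) and of Polishchuk-Vaintrob's HRR for matrix factorizations (the Chern-character specialization), which serve as consistency checks for both the sign and the normalization.

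Comparing the two explicit formulas then identifies $c_f$ with a single sign depending only on $n$, and the conjectured value $(-1)^{n(n+1)/2}$ should emerge from the Koszul signs incurred in passing between the tensor-product diagonal (built out of $n$ rank-one factorizations, which naturally orders the variables as $(x_1, dx_1, \ldots, x_n, dx_n)$) and the diagonal Koszul complex on the de Rham side (which naturally orders them as $(x_1, \ldots, x_n, dx_1, \ldots, dx_n)$). The hard part will be precisely this sign bookkeeping: the sign must be tracked faithfully through the HKR map, the diagonal resolution, the supertrace on $\Z/2$-graded complexes, Serre duality for $mf(Q,f)$, and Saito's normalization of the higher residue pairing. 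Since $(-1)^{n(n+1)/2}$ is an orientation artifact rather than a deep numerical coincidence, once the sign conventions are fully pinned down on both sides the conjecture should follow from a direct Koszul calculation.
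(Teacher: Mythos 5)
Your overall strategy---pin down $c_f$ by an explicit comparison, reducing to the Hochschild ($u=0$) level and tracking signs---is in the right spirit, but the mechanism you propose has a gap that the paper itself explicitly flags as the crux of the problem.

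You plan to realize the categorical pairing via a Koszul matrix factorization resolution of the diagonal, in the style of Dyckerhoff--Murfet and Polishchuk--Vaintrob, and then ``transport the resulting formula along the HKR quasi-isomorphism $I_f$.'' This is precisely the step that cannot be taken for granted. The diagonal-resolution formalism naturally produces Polishchuk--Vaintrob's comparison isomorphism $\gamma: HH_n(mf(Q,f)) \xra{\cong} H_n(\Omega^\bullet_{Q/k}, df)$ (their (2.28)), whereas Shklyarov's theorem is formulated with respect to the map $I_f(0)$ built from Dyckerhoff's compact generator $\cA_f$ and the exponential twist $\exp(-1[\delta_P])$. These two quasi-isomorphisms are not visibly the same, and matching them up---equation (\ref{PViso}) in the paper---is exactly what the authors say they ``believe there is no way to prove \dots without going through Theorem 1.8,'' i.e.\ without establishing the main theorem by other means. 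Your proposal quietly identifies the output of the diagonal resolution calculation with the $I_f$-preimages of de Rham forms without explaining how; as written, the argument is circular or at best has a large unresolved step whose content is comparable to the theorem itself.

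The paper avoids this pitfall by a genuinely different decomposition: it never constructs a diagonal kernel. Instead it factors $\eta_{mf}$ through the duality functor $\Psi$, the K\"unneth product $HH(mf(Q,f)) \otimes HH(mf(Q,-f)) \to HH(mf^\fm(Q,0))$, and an abstract trace map $\trace: HH_*(mf^\fm(Q,0)) \to k$ (Lemma \ref{lem1119t}); simultaneously the de Rham side is rewritten in terms of the residue map $\on{res}$ on $H_{2n}\R\Gamma_\fm(\Omega^\bullet_{Q_\fm/k})$ (Proposition \ref{prop1129}). The sign $(-1)^{n(n+1)/2}$ then drops out of the trace-versus-residue comparison (Theorem \ref{thm112}), which is established by a K\"unneth/multiplicativity argument (Lemmas \ref{lem528c} and \ref{lem531b}) that reduces it to the one-variable case $Q = k[x]$, $\fm = (x)$, where everything is computable by hand. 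That reduction to $n=1$ is what makes the ``orientation artifact'' intuition precise; without it, your proposed direct sign bookkeeping through the diagonal resolution, the supertrace, Serre duality, and Saito's normalization would be considerably harder to carry out reliably, and---more to the point---would land on $\gamma$ rather than $I_f$.

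So: the test-pair evaluation idea is fine, and the leading $u^n$-coefficient of $K_f$ being the Grothendieck residue is correct. But you need to either (a) independently prove that your diagonal-resolution pairing coincides with the canonical pairing $\eta_{mf}$ expressed through $I_f$ rather than through $\gamma$, or (b) abandon the diagonal resolution and adopt a factorization of $\eta_{mf}$ compatible with $I_f$ from the start, which is what the authors do. As it stands, (a) is an open problem the paper says it cannot solve directly, and your proposal does not supply a method for it.
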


%%%%%%%%%%%%%%%%%%%%%%
\subsection{Outline of the proof of Conjecture \ref{conj:s}}
%%%%%%%%%%%%%%%%%%%%%%

The constant $c_f$ can be determined from a related, but simpler, pairing on $HH_*(mf(Q, f))$, the Hochschild homology of $mf(Q, f)$. We recall that, for any dg-category $\cC$, there is a short exact sequence
\begin{equation}
\label{modu}
0 \to HN(\cC) \xra{\cdot u} HN(\cC) \to HH(\cC) \to 0
\end{equation}
of complexes. It follows, for instance from (\ref{HKRif}),
that $HN_*(mf(Q, f))$ and $HH_*(mf(Q, f))$ are concentrated in degree $n$ (mod 2).  The long exact sequence in homology induced by (\ref{modu}) therefore induces an isomorphism
\begin{equation}
\label{moduH}
HN_*(mf(Q, f))/u\cdot HN_*(mf(Q, f)) \xra{\cong} HH_*(mf(Q, f)).
\end{equation}
The pairing $K_{mf}$ determines a well-defined pairing modulo $u$, which we write, via (\ref{moduH}), as
$$
\eta_{mf} : HH_*(mf(Q, f)) \times HH_*(mf(Q, f)) \to \C.
$$
The isomorphism $I_f$ is $\C[[u]]$-linear and, upon
setting $u = 0$, it induces an isomorphism
$$
I_f(0): HH_n(mf(Q,f)) \xra{\cong} H_n(\Omega^\bu_{Q/\C}, -df).
$$

The higher residue pairing $K_f$ has the form
$$
K_f\left(\omega + \sum_{j \geq 1} \omega_j u^j, 
\omega' + \sum_{j \geq 1} \omega_j' u^j\right)
= \langle \omega, \omega' \rangle_{\on{res}} u^n + \text{higher order terms},
$$
where $\langle \omega, \omega' \rangle_{\on{res}}$ is the classical residue pairing determined by the partial derivatives of $f$. It is 
defined algebraically as
$$
\langle g \cdot dx_1 \cdots dx_n , h \cdot dx_1 \cdots dx_n \rangle_{\on{res}} = 
\res \left[\frac{g h \cdot dx_1 \cdots dx_n}{\frac{\del f}{\del x_1}, \dots, \frac{\del f}{\del x_n}}\right],
$$ 
where the right-hand side is Grothendieck's residue symbol.

Thus, upon dividing the maps in diagram (\ref{E122b}) by $u^n$ and setting $u = 0$, we obtain the commutative triangle
\begin{equation} \label{E122}
\xymatrix{
HH_n(mf(Q,f)) \times HH_n(mf(Q,f)) \ar[rr]^-{I_f(0) \times I_f(0)}_-\cong \ar[dr]_-{c_{f} \eta_{mf}} &&  
\frac{\Omega^n_{Q/\C}}{ df \smsh \Omega^{n-1}_{Q/\C}} \times \frac{\Omega^n_{Q/\C}}{ df \smsh \Omega^{n-1}_{Q/\C}}
\ar[dl]^-{\langle -,- \rangle_{\on{res}}}\\
& \C. \\
}
\end{equation}
Since $I_f(0)$ is an isomorphism, and the residue pairing is non-zero,
 the value of $c_f$ is uniquely determined by the commutativity of \eqref{E122}. 
 
In this paper, we re-establish the commutativity of diagram \eqref{E122} using techniques that differ from 
those used by Shklyarov. Our method results in an explicit
calculation of $c_f$:

\begin{thm} \label{introthm1} Shklyarov's Conjecture holds: that is, for any $f$ as above, 
$$
c_f = (-1)^{n(n+1)/2}.
$$
\end{thm}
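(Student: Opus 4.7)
The strategy I would pursue is to evaluate both pairings in diagram \eqref{E122} on Chern characters of explicit matrix factorizations, then extract the constant $c_f$ by direct comparison. Since $mf(Q,f)$ is compactly generated (in its Karoubi envelope) by matrix factorizations and both $\eta_{mf}$ and $\langle -,-\rangle_{\res}$ are bilinear, it suffices to verify
\[
c_f \cdot \eta_{mf}(\ch(X), \ch(Y)) \;=\; \langle I_f(0)(\ch(X)), I_f(0)(\ch(Y)) \rangle_{\res}
\]
for all pairs $(X,Y)$ of matrix factorizations, and in fact, because of non-degeneracy, for a single such pair on which neither side vanishes.

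For the right-hand side I would use a sign-tracked version of the Polishchuk--Vaintrob boundary-bulk formula, which writes $I_f(0)(\ch(X))$ as a normalized supertrace of a power of the de Rham derivative of the defining matrix of $X$, interpreted as an explicit class in $\Omega^n_{Q/\C}/(df \wedge \Omega^{n-1}_{Q/\C})$; Grothendieck's residue symbol then evaluates the right-hand side directly. For the left-hand side I would use the categorical trace pairing on morphism complexes: because $mf(Q,f)$ is proper, $\eta_{mf}(\ch(X), \ch(Y))$ factors through the composition $\uExt^*(X,Y) \otimes \uExt^*(Y,X) \to \uExt^*(X,X) \xrightarrow{\str} \C$, and this composition can be re-expressed via local Grothendieck--Serre duality at the singular point $\fm$ as a residue symbol on $\Omega^n_{Q/\C}$. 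The two sides then take the form of the same residue, and the universal sign relating them is $c_f$.

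A useful technical reduction, if a single global computation proves unwieldy, is Thom--Sebastiani: both pairings and the HKR quasi-isomorphism $I_f$ are compatible with the decomposition $f(x,y) = f_1(x) + f_2(y)$, so $c_{f_1 + f_2}$ is determined by $c_{f_1}$, $c_{f_2}$, and a universal Koszul sign from reordering tensor factors. One then reduces to $n = 1$, where a direct computation on a basic stabilization such as $(Q \xrightarrow{x} Q \xrightarrow{f/x} Q)$ should yield $c_{f_1} = -1$, and multiplies back, the reordering signs producing an additional factor of $(-1)^{n(n-1)/2}$ beyond the $n$ copies of $-1$. The total is $(-1)^n \cdot (-1)^{n(n-1)/2} = (-1)^{n(n+1)/2}$, as predicted.

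The principal obstacle, as often in the literature on matrix factorizations and cyclic homology, is sign bookkeeping rather than any single difficult computation. Signs enter independently through the Koszul rule on the Hochschild complex of $\Z/2$-graded morphism spaces; the supertrace in both the boundary-bulk map and the categorical trace; the sesquilinearity convention $g(u) \mapsto g(-u)$ in $K_{mf}$; the orientation implicit in Grothendieck's residue symbol; and the sign conventions in Saito's original definition of the higher residue pairing. The content of Shklyarov's conjecture is precisely that all of these contributions cancel down to the single universal factor $(-1)^{n(n+1)/2}$, independent of $f$, and the bulk of the work is therefore in setting up a framework that fixes each convention unambiguously and enables a clean end-to-end comparison.
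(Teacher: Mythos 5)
Your proposal shares the high-level plan of the paper -- isolate the constant, reduce to a low-dimensional case via a multiplicativity/Thom--Sebastiani argument, and pin the sign by explicit computation -- and your final sign bookkeeping ($n$ copies of $-1$ times a reordering Koszul sign $(-1)^{n(n-1)/2}$, totalling $(-1)^{n(n+1)/2}$) is exactly the arithmetic that comes out. However, there are two substantive gaps that prevent the argument from going through as written.

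First, the plan to extract $c_f$ by evaluating both pairings on Chern characters $\ch(X) = \id_X[\,]$ fails outright whenever $n$ is odd. Under the HKR isomorphism, $HH_*(mf(Q,f))$ is concentrated in degree $n \bmod 2$, so for $n$ odd one has $HH_0(mf(Q,f)) = 0$ and every Chern character vanishes. Both $\eta_{mf}(\ch(X),\ch(Y))$ and $\langle I_f(0)(\ch X), I_f(0)(\ch Y)\rangle_{\res}$ are then identically $0 = 0$, which places no constraint on $c_f$; in particular the "basic stabilization" computation you describe for $n=1$ cannot produce $c_{f_1} = -1$. This is visible in the paper: the Riemann--Roch comparison via Chern characters (Corollary \ref{PVcor}) is stated only for $n$ even.

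Second, the Thom--Sebastiani reduction only applies to polynomials of the special form $f(x,y) = f_1(x) + f_2(y)$, and iterating it down to $n=1$ requires $f$ to be a sum of single-variable terms. A general $f$ with an isolated singularity need not decompose this way, so even granting the reduction from $n$ even down to a small number of variables, the result is established only for a restricted class of potentials.

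The paper avoids both problems with a single structural move that your sketch is missing: instead of reducing the pairing on $mf(Q,f)$ directly, it factors the comparison through the category $mf^{\fm}(Q,0)$, by pairing $mf(Q,f)$ against $mf(Q,-f)$ and landing in the $f=0$ category via the tensor product. The key technical statement becomes the bottom triangle of diagram \eqref{outline} (Theorem \ref{thm112}), asserting that $\res \circ \e = (-1)^{n(n+1)/2}\,\trace$ on $HH_0(mf^\fm(Q_\fm,0))$ -- a statement that depends on $(Q,\fm)$ but not on $f$. Here $HH_0(mf^\fm(Q_\fm,0))$ is the full polynomial ring $k[y_1,\dots,y_n]$ (via Lemma \ref{lem529} and \eqref{HHexterior}), so there is plenty to evaluate on for any $n$; the multiplicativity that reduces to $n=1$ (Lemmas \ref{lem528c}, \ref{lem531b}, \ref{lem5242}) concerns tensor products of the algebras $Q'\otimes_k Q''$ with $f=0$ and is completely unconstrained by the shape of the original $f$; and the $n=1$ calculation is carried out on the basis elements $y^j$ of $HH_0(mf^{(x)}(k[x]_{(x)},0)) \cong k[y]$, not on Chern characters. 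Your "local Grothendieck--Serre duality" re-expression of the categorical trace as a residue is essentially the content of this Theorem \ref{thm112}; making it precise is where the real work lives, and the $f=0$ factorization is what makes that work tractable and uniform in $f$ and $n$.
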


In fact, we prove the commutativity of diagram \eqref{E122}, and Theorem \ref{introthm1}, in the case where $Q$ is an essentially smooth algebra over a characteristic 0 field $k$,
$\fm$ is a $k$-rational maximal ideal, and $f \in
\fm$ is such that $\fm$ is the only singularity of the morphism $f: \Spec(Q) \to \A^1_k$. 
The special case $k = \C$, $Q = \C[x_1, \dots, x_n]$, and $\fm = (x_1, \dots, x_n)$ yields Shklyarov's conjecture.

The general outline of our proof is summarized by the diagram
\begin{equation}
\label{outline}
\xymatrix{
HH_n(mf(Q,f)) \times HH_n(mf(Q,f))  \ar[d]^-{\id \times \Psi} \ar[rr]^-{I_f(0) \times I_f(0)}_-\cong && H_n(\Omega_{Q/k}^\bu, -df) \times H_n(\Omega_{Q/k}^\bu, -df)
\ar[d]^-{\id \times (-1)^n}\\
HH_n(mf(Q,f)) \times HH_n(mf(Q,-f)) \ar[rr]^-{I_f(0) \times I_{-f}(0)}_-\cong 
\ar[d]^-\star  && H_n(\Omega_{Q/k}^\bu, -df) \times H_n(\Omega_{Q/k}^\bu, df)
\ar[d]^-{\smsh} \\
HH_{2n}(mf^\fm(Q_{\fm},0))  \ar[dr]_-{(-1)^{n(n+1)/2} \trace} \ar[rr]^-{\e} && H_{2n} \R\Gamma_\fm (\Omega_{Q_{\fm}/k}^\bu) \ar[dl]^-{\on{res}} \\
& k. \\
}
\end{equation}
The map $\Psi$ is induced by taking $Q$-linear
duals; $\star$ is induced by a K\"unneth map followed by the tensor product of matrix factorizations; $\trace$ is
defined in Section \ref{traceresidue}; $\on{res}$ is Grothendieck's residue map; $\smsh$ is induced by exterior multiplication of differential forms, using that
the complexes  $(\Omega^\bu_{Q/k}, \pm f)$ are supported on $\{\fm\}$; and the map $\e$ is an HKR-type map.
We prove: 

\begin{enumerate}
\item the diagram commutes (Lemma \ref{If}, Lemma \ref{lem1129}, Corollary \ref{cor1129}, and Theorem \ref{thm112}),

\item the composition along the left side of this diagram is the canonical pairing $\eta_{mf}$ (Lemma \ref{lem1119t}), and 

\item the composition along the right side of this diagram is the residue pairing $\langle -,- \rangle_{\on{res}}$ (Proposition \ref{prop1129}).
\end{enumerate}

Finally, in Section \ref{PV}, we use some of our results to give a new proof Polishchuk-Vaintrob's Hirzebruch-Riemann-Roch theorem for matrix factorizations (\cite[Theorem 4.1.4(i)]{PV}).

We note that a result closely related to the commutativity of (\ref{E122}) was also proven by Polischuk-Vaintrob (\cite[Corollary 4.1.3]{PV}). More precisely, they prove that the residue pairing on $H_n(\Omega_{Q/k}^\bu, -df) \times H_n(\Omega_{Q/k}^\bu, df)$ and the canonical pairing on $HH_n(mf(Q,f)) \times HH_n(mf(Q,-f))$ coincide up to multiplication by $(-1)^{(n-1)n/2}$ via an isomorphism 
$$
\gamma: HH_n(mf(Q, f)) \xra{\cong} H_n(\Omega^\bullet_{Q/k}, df)
$$ 
described in \cite[(2.28)]{PV}. Combining this result of Polishchuk-Vaintrob with our Theorem \ref{introthm1} and the non-degeneracy of the residue pairing, we conclude that, if $\alpha, \alpha' \in HH_n(mf(Q, f))$, 
\begin{equation}
\label{PViso}
\langle \gamma(\alpha), \gamma(\alpha') \rangle_{\on{res}} = \langle I_f(0)(\alpha), I_f(0)(\alpha')\rangle_{\on{res}}.
\end{equation}
If one could prove (\ref{PViso}) directly, one could simply combine \cite[Corollary 4.1.3]{PV} with the commutativity of the top square of diagram (\ref{outline}) to quickly prove Shkylarov's conjecture. But we believe there is no way to prove (\ref{PViso}) without going through Theorem 1.8.

\vskip\baselineskip
\noindent {\bf Acknowledgements.}
We thank Srikanth Iyengar for helpful remarks concerning the proof of Lemma \ref{lem529}.
%%%%%%%%%%%%%%%%%%%%%%%%%%%%%%%%%%%%%
\section{Generalities on Hochschild homology for curved dg-categories}
%%%%%%%%%%%%%%%%%%%%%%%%%%%%%%%%%%%%%

We review some background on Hochschild homology of curved dg-categories and establish some new results concerning pairings
of such. Throughout this section, $k$ is a field, and ``graded'' means $\G$-graded for $\G \in \{\Z ,\Z/2\}$. We will eventually focus on the case $\G = \Z/2$. 

%%%%%%%%%%%%%%%%%%%%%%%%%%
\subsection{Hochschild homology of curved dg-categories}
%%%%%%%%%%%%%%%%%%%%%%%%%%
We refer the reader to \cite[Section 2.1]{BW} for the definition of a curved differential $\G$-graded category (henceforth referred to as a cdg-category). 
Recall that a cdg-category with just one object is a curved differential $\G$-graded algebra
(cdga).

For a cdg-category $\cC$ whose objects form a set, define $HH(\cC)^\nat$ to be the $\G$-graded $k$-vector space given by the direct sum totalization of the $\Z-\G$-bicomplex
which, in $\Z$-degree $n$, is the $\G$-graded $k$-vector space
$$
\bigoplus_{X_0, \dots, X_n \in \cC}
\Hom(X_1, X_0) \otimes_k \Sigma \Hom(X_2, X_1) \otimes_k \cdots \otimes_k \Sigma \Hom(X_{n}, X_{n-1}) \otimes_k \Sigma \Hom(X_0, X_n).
$$
When $\cC$ is \emph{essentially small}, so that the isomorphism classes of objects in the $\G$-graded category
underlying $\cC$ form a set (see \cite[Section 2.6]{PP}), we define $HH(\cC)^\nat$ by first replacing
$\cC$ with a full subcategory consisting of a single object from each isomorphism class. From now on, we will tacitly assume all of our cdg-categories are essentially small. 
Given $\a_i \in \Hom(X_{i+1}, X_i)$ for $i = 0, \dots, n$ (with $X_{n+1} = X_0$), we write 
$\a_0[\a_1| \cdots | \a_n]$ for the element $\a_0 \otimes s \a_1 \otimes \cdots \otimes s \a_n$ of $HH(\cC)^\nat$.  

The \emph{Hochschild complex of $\cC$}, denoted $HH(\cC)$, is the above graded $k$-vector space equipped with the differential $b := b_2 + b_1 + b_0$, where $b_2, b_1, b_0$ are defined as in \cite[Section 3.1]{BW}. Roughly, $b_2$ is the classical Hochschild differential induced by the composition law in $\cC$, 
$b_1$ is induced by the differentials of $\cC$, and $b_0$ is induced by the curvature elements of
$\cC$.  When $\cC$ has just one object with trivial curvature, then $\cC$ is a dga, and the maps $b_2$ and $b_1$ are the classical ones (and $b_0 = 0$
in this case). 

We will also need ``Hochschild homology of the second kind'', as introduced by Polishchuk-Positselski in \cite{PP} and by C{\u a}ld{\u a}raru-Tu in \cite{CT}; the latter authors call this theory ``Borel-Moore Hochschild homology". Define
$HH^{II}(\cC)^\nat$ to be the $\G$-graded $k$-vector space
given as the direct {\em product} totalization of the above bicomplex.  Equivalently, $HH^{II}(\cC)^\nat$ is the
completion of $HH(\cC)^\nat$ under the topology determined by the
evident filtration. Since $b$ is continuous for this topology, it
induces a differential on $HH^{II}(\cC)^\nat$, which we also write as $b$, and we write
$HH^{II}(\cC)$ for the resulting chain complex. 

%%%%%%%%%%%%%%%%%%%%%%%%%%%%%%%
\subsection{The K\"unneth map for Hochschild homology of cdga's}
%%%%%%%%%%%%%%%%%%%%%%%%%%%%%%%
For a cdga $\cA = (A, d_A, h_A)$, we have
$$
HH(A)^\nat = A \otimes_k T(\Sigma A),
$$
where, for any graded $k$-vector space $V$, $T(V) = \bigoplus_{n \ge 0} V^{\otimes n}$.
Recall that $T(V)$ is a commuative $k$-algebra under the {\em shuffle product}:
$$
(v_1 \otimes \cdots \otimes v_p) \bu (v_{p+1} \otimes \cdots \otimes v_{p+q}) 
= \sum_\s \pm v_{\s(1)} \otimes \cdots \otimes v_{\s(p+q)},
$$
where $\s$ ranges over all $(p,q)$-shuffles. The sign is given by the usual rule for permuting homogeneous elements in a product.

Since $A$ is also an algebra, $HH(A)^\nat$ has an algebra structure, whose multiplication
rule will be written as 
$$
- \star -: HH(\cA)^\nat \otimes_k HH(\cA)^\nat \to HH(\cA)^\nat.
$$
It is given explicitly as 
$$
x [a_1 | \cdots | a_p] \star y [a_{p+1} | \cdots | a_{p+q}] =
\sum_{\sigma} \pm xy [a_{\s(1}) | \cdots | a_{\s(p+q)}].
$$
Note that the canonical inclusion $T(\S A) \into HH(A)^\nat$ lands in the center of $HH(A)^\nat$ for the $\star$ multiplication.

If $\cB = (B, d_B, h_B)$ is another cdga, the tensor product of $\cA$ and $\cB$ is defined to be 
$$
\cA \otimes_k \cB = (A \otimes_k B, d_A \otimes 1 + 1 \otimes d_B, h_A \otimes 1 + 1 \otimes h_B).
$$
We define the \emph{K\"unneth map}
$$
- \tilde{\star} -: HH(\cA)^\nat \otimes_k HH(\cB)^\nat \to HH(\cA \otimes_k\cB)^\nat
$$
to be the composition of the tensor product of the maps induced by the canonical inclusions $HH(\cA)^\nat \into HH(\cA \otimes \cB)^\nat$
and $HH(\cB)^\nat \into HH(\cA \otimes \cB)^\nat$ with the $\star$ product for $\cA \otimes_k \cB$.
The $\star$ product on $HH(\cA)^\nat$ can be recovered from the K\"unneth map by setting $\cB = \cA$: the $\star$ product coincides with the composition
$$
HH(\cA)^\nat \otimes_k HH(\cA)^\nat \xra{\tilde{\star}} HH(\cA \otimes_k \cA)^\nat \xra{\mu_*} HH(\cA)^\nat ,
$$
where 
$$
\mu_*: (A \otimes_k A) \otimes_k T(\S(A \otimes A)) \to A \otimes_k T(\S A)
$$
is induced by the multiplication map $\mu: A \otimes A \to A$. 

It is important to note that, for an algebra $A$, the $\star$ product
does not, in general, make $HH(A)$ into a dga, since $b_2$ is not a derivation for the $\star$ multiplication unless $A$ is
commutative. But, $b_2$ is a derivation for the K\"unneth map; see Lemma \ref{tedious}.

The $\star$ product does behave well with respect to $b_1$. In detail, recall that the tensor algebra functor $T( - )$ sends $\G$-graded complexes of $k$-vector
spaces to differential $\G$-graded algebras under the shuffle product.
Let $d_T$ denote the differential on $T(\Sigma A)$ induced from the differential $\Sigma d$ on
$\Sigma A$. Then $(T(\Sigma A), \bu, d_T)$ is a dga, where $\bu$ is the shuffle product. By examining the explicit formula for $b_1$, we see that
$$
b_1 = d_A \otimes 1 + 1 \otimes d_T.
$$
In other words, $(HH(\cA)^\nat, \star, b_1)$ is a dga, and it is given as a tensor product of dga's:
$$
(HH(\cA)^\nat, \star, b_1) = (A, \cdot , d_A) \otimes (T(\Sigma A), \bu, d_T),
$$
where $\cdot$ is the multiplication rule for $A$.

If $z$ is an element of $A$ of even degree, then we have
$$
1 [z] \star a_0[a_1 | \cdots | a_n] =
\sum_i (-1)^{|a_0| + |a_1| + \cdots + |a_i| - i}  a_0[a_1| \cdots |a_i| z| a_{i+1} | \cdots |a_n].
$$
In particular, the component $b_0$ of the differential in $HH(\cA)$ is given by 
\begin{equation} \label{1029a}
b_0 = 1 [h] \star -.
\end{equation}
Since $1 [h]$ is a central element of $(A \otimes T(\Sigma A), \star)$ of odd degree, it follows that 
\begin{equation} \label{1029b}
b_0(-) \star - = b_0 (- \star -) = \pm - \star b_0(-).
\end{equation}

The $\star$ product extends to $HH^{II}$ since it is continuous for the topology on $HH$  whose completion gives $HH^{II}$. 

%%%%%%%%%%%%%%%%%%%%%%%%%%%%%%
\subsection{Functoriality of $HH^{II}$ using the shuffle product}
%%%%%%%%%%%%%%%%%%%%%%%%%%%%%%
We recall that a morphism $\cA = (A,d_A,h_A) \to \cB = (B, d_B,h_B)$ of cdga's is given by a pair $\phi = (\rho, \b)$, with $\rho: A \to B$ a morphism of
$\G$-graded $k$-algebras and $\b \in B$ a degree one element, such that 
\begin{itemize}
\item $\rho(d(a)) - d'(\rho(a)) = [\b, \rho(a)]$ for all $a \in A$, and
\item $\rho(h)  = h' + d'(\b) + \b^2$.
\end{itemize}
Such a morphism is called {\em strict} if $\b = 0$.

A strict morphism $\phi$ induces maps
$$
\phi_*: HH(\cA) \to HH(\cB) \and \phi_*: HH^{II}(\cA) \to HH^{II}(\cB)
$$
given by
$$
\phi_*(a_0 [a_1| \dots| a_n])  = \rho(a_0) [\rho(a_1) | \cdots | \rho(a_n)].
$$
A non-strict morphism $\phi$ does not, in general, induce a map on Hochschild homology, but it does induce a map
$$
\phi_*: HH^{II}(\cA) \to HH^{II}(\cB)
$$
given by sending $a_0 [a_1| \dots| a_n]$ to 
\begin{equation} 
\sum_{i_0, \dots, i_n \geq 0} (-1)^{i_0 + \cdots + i_n} \rho(a_0) [\underbrace{\b | \cdots | \b}_{i_0 \text{ copies}} | \rho(a_1) |\underbrace{\b | \cdots | \b}_{i_1 \text{ copies}}| \rho(a_2) | \cdots | \rho(a_n) | \underbrace{\b | \cdots | \b}_{i_n \text{ copies}}].
\end{equation}

We next show how $\phi_*$ may also be defined using the $\star$ product. Suppose $b \in B$ is a degree $1$ element, and
let $\exp(1[b])$ denote the degree $0$, central element of the algebra $(HH^{II}(B)^\nat, \star)$
given by evaluating the power series for the exponential function
at $1[b]$:
$$
\begin{aligned}
\exp(1[b]) & = 
1 + 1[b] + \frac{1}{2!} (1[b] \star 1[b]) + \frac{1}{3!} (1[b]\star 1[b]\star 1[b]) + \cdots \\
& = 1 + 1[b] + 1[b|b]  + 1[b|b|b] + \cdots. \\
\end{aligned}
$$
The signs are correct, since $s(b) \in T(\Sigma B)$ has even degree. We have:
$$
\begin{aligned}
\exp(1[b]) \star (b_0 [b_1| \dots| b_n]) & = (b_0 [b_1| \dots| b_n])\star \exp(1[b]) \\
& =
\sum_{i_0, \dots, i_n \geq 0} b_0 [\underbrace{b | \cdots | b}_{i_0 \text{ copies}} | b_1 |\underbrace{b | \cdots | b}_{i_1 \text{ copies}}| b_2| \cdots |
b_n |  \underbrace{b | \cdots | b}_{i_n \text{ copies}}]. \\
\end{aligned}
$$
%%%%%
By comparing formulas, we see that
\begin{equation} \label{1029v}
\phi_* = \exp(1[-\b]) \star \rho_*.
\end{equation}
%%%%%
That is, 
$$
\phi_*(a_0 [a_1| \dots| a_n])  = \exp(1[-\b]) \star  \rho(a_0) [\rho(a_1) | \cdots | \rho(a_n)]
= \rho(a_0) [\rho(a_1) | \cdots | \rho(a_n)] \star \exp(1[-\b]).
 $$

%%%%%%%%%%%%%%%%%%%%%%%%
%%%%%%%%%%%%%%%%%%%%%%%%
%%%%%%%%%%%%%%%%%%%%%%%%
%%%%%%%%%%%%%%%%%%%%%%%%

%%%%%%%%%%%%%%%%%%%%%%%%
\subsection{The K\"unneth map for Hochschild homology of cdg-categories}
\label{kunnethcdgc}
%%%%%%%%%%%%%%%%%%%%%%%%
For a pair of cdg-categories 
$\cC$ and $\cD$, we write $\cC \otimes_k \cD$ for the  cdg-category whose objects are ordered pairs $(C, D)$ with $C \in \cC$ and
$D \in \cD$ and such that
$$
\Hom((C,D), (C', D')) = \Hom_\cC(C,C') \otimes_k \Hom_\cD(D, D'),
$$
with differentials given in the standard way for a tensor product.  The composition rules are the evident ones, and the curvature elements are defined by
$$
h_{(C,D)} = h_C \otimes \id_D + \id_C \otimes h_D.
$$
Note that, if $\cA = (A, d_A, h_A)$ and $\cB = (B, d_B, h_B)$ are cdga's, then this construction specializes to the construction given above:
$$
\cA \otimes_k \cB = (A \otimes_k B, d_A \otimes \id_B + \id_A \otimes d_B, h_A \otimes \id_B + \id_A \otimes h_B).
$$

We define the {\em K\"unneth map} for the cdg-categories $\cC$ and $\cD$ to be the map
$$
- \tstar -: HH(\cC)^\nat \otimes_k HH(\cD)^\nat \to HH(\cC \otimes_k \cD)^\nat
$$
given by 
$$
c_0[c_1 | \cdots | c_m] \tstar d_0 [d_1| \cdots |d_n] = \sum_\sigma \pm c_0 \otimes d_0[ e_{\sigma(1)} | \cdots | e_{\sigma(m+n)}],
$$
where $\sigma$ ranges over all $(m,n)$-shuffles, and 
$$
e_i := 
\begin{cases}
c_i \otimes \id, & \text{if $1 \leq i \leq m$, and} \\
\id \otimes d_{i-m} , & \text{if $m+1 \leq i \leq m+n$.}
\end{cases}
$$
This map extends to $HH^{II}( - )^\nat$:
$$
- \tstar -: HH^{II}(\cC)^\nat \otimes_k HH^{II}(\cD)^\nat \to HH^{II}(\cC \otimes \cD)^\nat.
$$

\begin{rem} There does not seem to be an analogue of the $\star$ product for a general cdg-category. The issue is that, in general, there is no ``diagonal map"
$$
\cC \otimes_k \cC \to \cC.
$$
\end{rem}

\begin{lem} 
\label{tedious}
For any two cdg-categories $\cC$ and $\cD$, the diagram
$$
\xymatrix{ 
HH(\cC)^\nat \otimes_k HH(\cD)^\nat \ar[d]^-{b_i \otimes \id + \id \otimes b_i} \ar[r]^-{-\tstar -}& HH(\cC \otimes_k \cD)^\nat \ar[d]^-{b_i}\\
HH^{II}(\cC)^\nat \otimes_k HH(\cD)^\nat \ar[r]^-{- \tstar -}& HH(\cC \otimes_k \cD)^\nat
}
$$
commutes for $i = 0, 1$, and $2$, and similarly for $HH^{II}( - )^\nat$. In particular,
$$
- \tstar -: HH(\cC) \otimes_k HH(\cD) \to HH(\cC \otimes_k \cD)
$$
and
$$
- \tstar -: HH^{II}(\cC) \otimes_k HH^{II}(\cD) \to HH^{II}(\cC \otimes_k \cD)
$$
are chain maps. 
\end{lem}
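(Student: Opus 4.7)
The plan is to verify the graded Leibniz identity
$$
b_i(\alpha \tstar \beta) = b_i(\alpha) \tstar \beta + (-1)^{|\alpha|} \alpha \tstar b_i(\beta)
$$
separately for each of the three pieces $b_0, b_1, b_2$ of the Hochschild differential. These components arise from structurally independent features of a cdg-category (curvature, internal differentials, and composition, respectively) and interact with $\tstar$ via different mechanisms, so it is natural to treat them one at a time. Once the identity is established on $HH$, the $HH^{II}$ version follows automatically, since both $\tstar$ and each $b_i$ are continuous for the filtration topology whose completion defines $HH^{II}$.

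The cases $i = 0$ and $i = 1$ are straightforward. For $i = 1$, the internal differential on $\cC \otimes_k \cD$ decomposes as $d_\cC \otimes 1 + 1 \otimes d_\cD$, and an entry of the form $c \otimes \id$ or $\id \otimes d$ in a shuffled word has its differential supported in a single tensor factor; the compatibility with shuffles is then a Koszul sign computation, mirroring the familiar fact that the tensor algebra functor with shuffle product sends graded complexes to differential graded algebras. For $i = 0$, one uses the decomposition $h_{(C,D)} = h_C \otimes \id_D + \id_C \otimes h_D$: since $b_0$ inserts the curvature element into every slot of the word, the two summands produce insertions of $\cC$-type and $\cD$-type entries respectively, which after regrouping into shuffles yield $b_0(\alpha) \tstar \beta + (-1)^{|\alpha|} \alpha \tstar b_0(\beta)$.

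The main obstacle is the case $i = 2$, the compositional part of the differential. Expanding $b_2(\alpha \tstar \beta)$ produces terms in which two adjacent entries of a shuffled word are composed. Classifying these by the origins of the entries, one obtains pure contributions (with both entries from $\alpha$, or both from $\beta$) and mixed contributions (with a $c \otimes \id$ adjacent to an $\id \otimes d$). The pure contributions reassemble into $b_2(\alpha) \tstar \beta + (-1)^{|\alpha|} \alpha \tstar b_2(\beta)$, and the key step is to show that the mixed contributions cancel in pairs. For each shuffle $\sigma$ placing entries of opposite type at adjacent positions $j$ and $j+1$, the shuffle $\sigma'$ obtained by swapping these positions is also a shuffle; both yield the same composite up to a single Koszul sign, since $(c \otimes \id)(\id \otimes d) = c \otimes d$ while $(\id \otimes d)(c \otimes \id) = (-1)^{|c||d|} c \otimes d$ in $\cC \otimes_k \cD$. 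Combined with the sign change of the shuffle product under the transposition and the positional sign built into $b_2$, the two contributions precisely cancel. This Eilenberg--Zilber-style cancellation is the combinatorial heart of the proof; once it is verified, the remaining terms give exactly the Leibniz rule for $b_2$.
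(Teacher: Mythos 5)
Your proposal is correct and fleshes out exactly the computation the paper dismisses as ``a routine check from the definitions'': there is essentially one way to prove this lemma, namely by direct verification, and you have identified the only genuinely delicate point (the Eilenberg--Zilber cancellation of mixed terms in the $b_2$ case). Your sign analysis is sound: writing $\epsilon$ for the positional sign exponent in $b_2$, the ratio of the two paired contributions is $(-1)^{|c|+|d|}\cdot(-1)^{(|c|-1)(|d|-1)}\cdot(-1)^{|c||d|}=-1$, so they cancel as claimed, and the remaining pure terms assemble to $b_2(\alpha)\tstar\beta + (-1)^{|\alpha|}\alpha\tstar b_2(\beta)$.
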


\begin{proof} 
This follows from the definitions by a routine check.
\end{proof}

%%%%%%%%%%%%%%%%
\subsection{Naturality of the K\"unneth map}
%%%%%%%%%%%%%%%%
We recall that a morphism $\cA \to \cB$ of cdg-categories is a pair $\phi = (F, \beta)$, where $F : \cA \to \cB$ is a morphism of categories enriched in $\G$-graded $k$-vector spaces, and $\beta$ is an assignment to each object $X$ of $\cA$ a degree 1 element $\beta_X \in \End_{\cB}(F(X))$. The pair $(F, \beta)$ is required to satisfy:
\begin{itemize}
\item For all $X, Y \in \on{Ob}(\cA)$ and $f \in \Hom_{\cA}(X, Y)$, 
$$
F(\delta(f)) = \delta(F(f)) + \b_Y \circ F(f) - (-1)^{|f|} F(f) \circ  \b_X, 
$$
where $\delta$ is the differential on  $\Hom_{\cA}(X, Y)$; and 
\item for all $X \in \Ob(\cA)$, 
$$
F(h_X) = h_{F(X)} + \delta(\b_X) + \b_X^2.
$$
\end{itemize}
$\phi$ is called \emph{strict} if $\b_X = 0$ for all $X$. 

\begin{lem} \label{lem112}
\label{key}
  Suppose $\cA, \cA', \cB, \cB'$ are curved differential $\G$-graded categories, and $\phi = (F, \beta): \cA \to \cB$,
  $\phi' = (F', \beta'): \cA' \to \cB'$ are morphisms of such. Then
  
  \begin{enumerate}
    \item
      $\phi \otimes \phi' := (F \otimes F', \beta \otimes 1 + 1 \otimes \beta')$ is a morphism from $\cA \otimes_k \cA'$ to $\cB \otimes_k
      \cB'$, and, if $\phi$ and $\phi'$ are strict morphisms, then so is $\phi \otimes \phi'$;

    \item the diagram
      $$
      \xymatrix{
        HH^{II}(\cA) \otimes_k HH^{II}(\cA') \ar[d]^-\tstar \ar[rr]^-{(\phi)_* \otimes (\phi')_*} && HH^{II}(\cB) \otimes_k HH^{II}(\cB') \ar[d]^-\tstar \\
        HH^{II}(\cA \otimes_k \cA') \ar[rr]^-{(\phi \otimes \phi')_*} && HH^{II}(\cB \otimes_k \cB')  \\
      }
      $$
      commutes; and
     
    \item if $\phi$ and $\phi'$ are strict morphisms, the corresponding diagram involving ordinary Hochschild homology commutes.
\end{enumerate}
    \end{lem}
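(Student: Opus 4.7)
The plan is to verify (1), (3), and (2) in turn, with the bulk of the work concentrated in (2).

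For (1), one unpacks the two axioms for a cdg-morphism recalled just above the lemma. The differential axiom is additive in each argument and linear in the morphism, so it follows componentwise from the axioms for $\phi$ and $\phi'$. For the curvature axiom, the crucial observation is that $\beta_X \otimes \id$ and $\id \otimes \beta'_{X'}$ both have odd degree, hence anticommute in the Koszul-signed tensor product $\End_{\cB}(F(X)) \otimes_k \End_{\cB'}(F'(X'))$. Consequently
$$
(\beta_X \otimes \id + \id \otimes \beta'_{X'})^2 \;=\; \beta_X^2 \otimes \id + \id \otimes (\beta'_{X'})^2,
$$
which combined with $\delta(\beta_X \otimes \id + \id \otimes \beta'_{X'}) = \delta(\beta_X) \otimes \id + \id \otimes \delta(\beta'_{X'})$ and the curvature axioms for $\phi$ and $\phi'$ reproduces $h_{(F(X),F'(X'))}$ exactly. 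Preservation of strictness is obvious.

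For (3), the strict naturality is immediate: both compositions in the diagram act on a shuffled tensor expression simply by applying $F$ and $F'$ entrywise, with no $\beta$-corrections, so shuffle signs and permutations match on the nose.

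Part (2) is the main obstacle. In the cdga case one would reduce the statement to the clean identity $\phi_* = \exp(1[-\beta]) \star \rho_*$ from equation (1.4.2), but, as the excerpt notes, no $\star$ product exists for a general cdg-category, so we must work directly with the explicit $\beta$-insertion formula (1.3.2). My plan is to factor $\phi_*$ as the composition of the strict map $(F,0)_*$ with the $\beta$-insertion operator $I_\beta$ described by that formula, and similarly for $\phi'_*$ and for $(\phi \otimes \phi')_*$ (using the curvature correction $\beta \otimes \id + \id \otimes \beta'$). By part (3), the strict parts commute with $\tstar$, so everything reduces to proving
$$
I_{\beta \otimes \id + \id \otimes \beta'}(\gamma \tstar \gamma') \;=\; I_\beta(\gamma) \tstar I_{\beta'}(\gamma')
$$
for $\gamma \in HH^{II}(\cB)$ and $\gamma' \in HH^{II}(\cB')$. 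This is a shuffle-theoretic identity, to be checked by directly matching terms. Each term on the left-hand side is indexed by a choice of placement of finitely many $\beta_X \otimes \id$ or $\id \otimes \beta'_{X'}$ entries into the slots of a fixed $(m,n)$-shuffle of $(F(c_i)\otimes \id)$'s and $(\id \otimes F'(d_j))$'s. The key combinatorial observation is that such a placement is uniquely encoded by: an insertion of $\beta$'s into the $\cB$-factor, an independent insertion of $\beta'$'s into the $\cB'$-factor, and an outer shuffle of the two augmented sequences, which is precisely the structure of the right-hand side. The main delicate point, which I expect to be the hardest step, is confirming that the Koszul signs agree under this bijection; this works because $\beta_X \otimes \id$ has the same parity as any $F(c_i) \otimes \id$ for the purpose of permuting past a $\id \otimes F'(d_j)$ or a $\id \otimes \beta'_{X'}$, so every transposition contributes the same sign on both sides of the claimed identity.
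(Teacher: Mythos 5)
Your proposal is correct in outline, but takes a route through part (2) genuinely different from the paper's. The paper first reduces to the case of cdg-\emph{algebras} (in which the $\star$-product on $HH^{II}(\cA)^\nat$ exists), uses the identity $\phi_* = \exp(1[-\beta]) \star \rho_*$ from \eqref{1029v} together with the computation $\exp(1[-\beta]) \tstar \exp(1[-\beta']) = \exp(1[-\beta \otimes 1 - 1 \otimes \beta'])$, and finishes via the $\star$/$\tstar$ interchange; the general categorical case is described as "notationally more complicated but essentially the same." You instead work directly with the $\beta$-insertion operator $I_\beta$ (so $\phi_* = I_\beta \circ \rho_*$) and establish the key identity $I_{\beta\otimes1 + 1\otimes\beta'}(\gamma \tstar \gamma') = I_\beta(\gamma) \tstar I_{\beta'}(\gamma')$ by a shuffle bijection. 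What the paper's approach buys is brevity and an elegant formalism; what yours buys is that it genuinely applies to cdg-\emph{categories} without the implicit reduction step, since there is no $\star$-product there, and it spells out the combinatorics that the paper's "essentially the same" glosses over. Two small corrections. First, your stated reason for the Koszul sign match is off: it is not true that $\beta_X \otimes \id$ "has the same parity as any $F(c_i) \otimes \id$" — $\beta_X$ has odd degree while $c_i$ can have either parity. The correct reason is that after the shift $\Sigma$, the $\beta$-entries have \emph{even} degree $0$, and hence are invisible to all Koszul signs; so the sign of the outer shuffle $\tau$ on the $\beta$-augmented sequences equals the sign of the underlying shuffle $\sigma$ of the $c_i$'s and $d_j$'s, while the $(-1)^{\#\text{insertions}}$ factors match tautologically. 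Second, the appeal to part (3) for the commutation of $\rho_*$ with $\tstar$ is slightly imprecise: $(F,0)$ is not in general a morphism of cdg-categories (since the curvature condition reads $F(h_X) = h_{F(X)}$, which fails when $\beta \neq 0$), so part (3) does not literally apply. What you actually need is the elementary observation that applying a $k$-linear functor entrywise to bar words commutes with the shuffle formula for $\tstar$ at the level of graded vector spaces, irrespective of any chain-map condition; the verification is identical to that of part (3), just phrased without reference to differentials.
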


\begin{proof} The proof of (1) is a routine check, and (3) is an immediate consequence of (2). 
For (2), to simplify the notation,
we assume the cdg-categories involved are cdg-algebras; the proof of the general claim is notationally more complicated
but essentially the same. 
Write $\phi = (\rho, \b)$, $\phi' = (\rho', \b')$, so that, by (\ref{1029v}),
$$
  \phi_* = \exp(1[-\b]) \star \rho_* \and
  \phi'_* = \exp(1[-\b']) \star \rho'_*.
  $$

Let $\iota: HH^{II}(\cA) \into HH^{II}(\cA \otimes_k \cA')$ and
$\iota': HH^{II}(\cA') \into HH^{II}(\cA \otimes_k \cA')$ be the canonical inclusions.
We have
$$
\exp(1[-\b]) \tstar \exp(1[-\b']) = \exp(\iota(1[-\b]))\star \exp(\iota'(1[-\b']))
=  \exp(1[-\b \otimes 1 - 1 \otimes \b']);
$$
the second equation holds since
$\iota(1[-\b])$ and $\iota'(1[-\b'])$ commute. Therefore, for elements $\a \in HH^{II}(\cA)$ and $\a' \in HH^{II}(\cA')$, using also the associativity of $\star$, we get
$$
\begin{aligned}
(\phi)_*(\a) \tstar (\phi')_*(\a') 
& =  (\exp(1[-\b]) \star \rho(\a)) \tstar  (\exp(1[-\b']) \star \rho'(\a') ) \\
& =  (\exp(1[-\b]) \tstar \exp(1[-\b'])) \star (\rho(\a)   \tstar \rho'(\a') ) \\
& =   \exp(1[-\b \otimes 1 - 1 \otimes \b'])\star (\rho \otimes \rho')(\a    \tstar \a')  \\
& =  (\phi \otimes \phi')_*(\a \tstar \a').
\end{aligned}
$$
\end{proof}

%%%%%%%%%%%%%%%%%%%%%%%%%%%%%
\section{Hochschild homology of matrix factorization categories}
\label{HHofmf}
%%%%%%%%%%%%%%%%%%%%%%%%%%%%%
Let $k$ be a field, and let $Q$ be an essentially smooth $k$-algebra. Fix $f \in Q$. 

%%%%%%%%%%%%%%%%%%%%%%%%%%%%
\subsection{Matrix factorizations}
\label{mf}
%%%%%%%%%%%%%%%%%%%%%%%%%%%%
The dg-category $mf(Q, f)$ of \emph{matrix factorizations of $f$ over $Q$} is defined as follows:
\begin{itemize}
\item Objects are pairs $(P, \d_P)$, where $P$ is a finitely generated $\Z/2$-graded projective $Q$-module, and $\d_P$ is an odd degree endomorphism of $P$ such that $\d_P^2 = f \id_P$. 
\item $\Hom_{mf(Q, f)}((P, \d_P), (P', \d_{P'}))$ is the $\Z/2$-graded complex $\Hom_Q(P, P')$ with differential $\partial$ given by
$$
\partial(\a) = \d_{P'} \a- (-1)^{|\a|} \a \d_P
$$
for $\a$ homogeneous. From now on, we will omit the subscript on $\Hom_{mf(Q, f)}(-, -)$.
\end{itemize}
We emphasize that $f$ is allowed to be 0. The \emph{homotopy category of $mf(Q, f)$}, denoted $[mf(Q, f)]$, is the $Q$-linear category with the same objects as $mf(Q, f)$ and morphisms given by $\Hom_{[mf(Q, f)]}(-, -) := H^0 \Hom(- , -)$.

Let $X, Y \in mf(Q, f)$, and let $\a_0, \a_1 \in \Hom(X, Y)$ be cocycles. We recall that $\a_0, \a_1$ are \emph{homotopic} if there is an odd degree $Q$-linear map $h : X \to Y$ such that
$$
h d_X + d_{Y}h = \a_0 - \a_1.
$$
This is just the usual notion of a homotopy between morphisms of a $\Z/2$-graded complex, adapted verbatim to the setting of matrix factorizations. An object $X \in mf(Q, f)$ is \emph{contractible} if $\id_X$ is null-homotopic. Morphisms in $mf(Q, f)$ that are cocycles are homotopic if and only if they are equal in $[mf(Q, f)]$. 

\begin{defn} Given $X \in mf(Q,f)$, the {\em support of $X$} is the set
$$
\supp(X) = \{ \fp \in \Spec(Q) \mid \text{$X_\fp$ is not a contractible object of $mf(Q_\fp, f)$} \}.
$$
For a closed subset $Z$ of $\Spec(Q)$, let $mf^Z(Q,f)$ denote the full dg-subcategory of $mf(Q,f)$ consisting of those $X$ with $\supp(X) \subseteq Z$.
\end{defn}
We record the following:

\begin{prop} \label{prop116a}
Let $X \in mf(Q,f)$. 
\begin{enumerate}

\item When $f = 0$, $\supp(X)$ is the set of points at which the $\Z/2$-complex $X$ is not exact. Therefore, when $f = 0$, the notion of support defined above agrees with the usual notion of support for a $\Z/2$-graded complex. 

\item We have $\supp(X) \subseteq \Spec(Q/f)$. When $f$ is a non-zero-divisor, $\supp(X) \subseteq \Sing(Q/f)$. 

%%%%%%%%%%%
%%%%%%%%%%%
\end{enumerate}
\end{prop}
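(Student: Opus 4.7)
The proof of both statements reduces to local questions at a prime $\fp$; the crucial fact is that $Q_\fp$ is a regular local ring, since $Q$ is essentially smooth over $k$. The central technical device is that, for any matrix factorization $(P,d)$ of $f$ over a ring $R$ in which $f$ is a non-zero-divisor, the reduction of $(P,d)$ modulo $f$ is a $2$-periodic free resolution of $M := \coker(d_0)$ over $R/f$. Exactness of the reduced complex is verified directly from $d_0 d_1 = f \cdot \id$: if $d_1(x) \in f P_0$, writing $d_1(x) = fy = d_1 d_0(y)$ gives $f(x - d_0(y)) = 0$, forcing $x \in \im(d_0)$ because $f$ is a non-zero-divisor on $P_1$.

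For part (1), contractibility always implies exactness. Conversely, when $f = 0$ an MF is just a $\Z/2$-graded complex, and if it is exact over $Q_\fp$ then, setting $M := \ker(d_0) = \im(d_1)$, the $2$-periodic extension $\cdots \to P_0 \to P_1 \to P_0 \to P_1 \to M \to 0$ is a free resolution of $M$. Since $Q_\fp$ is regular, $\Tor_i^{Q_\fp}(M,-)$ vanishes for $i > \dim Q_\fp$, while $2$-periodicity of the resolution gives $\Tor_i \cong \Tor_{i+2}$ for $i \ge 1$; combining these forces all higher $\Tor$'s to vanish, so $M$ is flat and therefore free as a finitely generated module over a local Noetherian ring. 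The resulting splittings of $0 \to \ker(d_j) \to P_j \to \ker(d_{j+1}) \to 0$ for $j=0,1$ produce an explicit contracting homotopy.

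For part (2), if $f \notin \fp$ then $f$ is a unit in $Q_\fp$, and the odd endomorphism $h$ of $X_\fp$ with $h|_{P_0} = 0$ and $h|_{P_1} = f^{-1} \cdot d|_{P_1}$ satisfies $hd + dh = \id_{X_\fp}$, giving $\supp(X) \subseteq \Spec(Q/f)$. Next, assume $f$ is a non-zero-divisor and $\fp \in \Spec(Q/f) \setminus \Sing(Q/f)$, so that $Q_\fp/f$ is a regular local ring. The opening observation then yields a $2$-periodic free resolution of $M := \coker(d_0)_\fp$ over $Q_\fp/f$, and the same $\Tor$-periodicity argument (now applied over $Q_\fp/f$) forces $M$ to be free. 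Via the Eisenbud correspondence between $[mf(Q_\fp, f)]$ and the stable category of maximal Cohen-Macaulay modules over the hypersurface $Q_\fp/f$, freeness of $M$ implies that $X_\fp$ is a direct sum of trivial matrix factorizations $Q_\fp \xra{1} Q_\fp \xra{f} Q_\fp$, each of which is visibly contractible. The principal technical points are the exactness of the reduction modulo $f$ (which genuinely requires the non-zero-divisor hypothesis) and the deduction that free cokernel implies a decomposition as a direct sum of trivial MFs; both are standard, though the latter involves some bookkeeping with Nakayama's lemma.
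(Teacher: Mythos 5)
Your proof is correct, and it takes a genuinely more self-contained route than the paper. The paper disposes of (1) by citing \cite[Lemma 2.3]{BMTW} and of the second half of (2) by invoking Orlov's theorem that $[mf(Q,f)]$ is equivalent to the singularity category of $Q/f$, which vanishes when $Q/f$ is regular. You instead reprove the needed special cases from scratch: for (1) you build the $2$-periodic free resolution of $M=\ker(d_0)$ over the regular local ring $Q_\fp$ and use $\Tor$-periodicity together with finite global dimension to force $M$ (and, by the same argument applied to $\ker(d_1)$, its partner module) to be free, giving an explicit contracting homotopy; for (2) you reduce $X$ mod $f$, run the same $\Tor$-periodicity argument over the regular local ring $Q_\fp/f$, and invoke Eisenbud's MCM correspondence rather than Orlov's singularity-category formulation. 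The trade-off is exactly what you would expect: the paper's version is shorter but leans on two external theorems, while yours is longer but elementary and makes transparent exactly where regularity enters. Two small points worth tightening if you write this up: in verifying exactness of $X\otimes Q/f$ you pass from $d_1(x-d_0(y))=0$ to $f(x-d_0(y))=0$, which requires applying $d_0$ and using $d_0 d_1 = f\cdot\id$ (the intermediate step is currently suppressed); and in (1) you should say explicitly that the same periodicity argument also shows $\ker(d_1)$ is free, since both short exact sequences need to split to produce the homotopy.
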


\begin{proof}

(1) This is \cite[Lemma 2.3]{BMTW}. (2) It is easy to check that any matrix factorization of a unit is contractible. Suppose $f$ is a non-zero-divisor. By
\cite[Theorem 3.9]{orlov}, the homotopy category $[mf(Q, f)]$ is equivalent to the singularity category of $Q/f$, and the singularity category is trivial when
$Q/f$ is regular. 
\end{proof}

\begin{rem} If $f$ is a non-zero-divisor, so that the morphism of schemes
$f: \Spec(Q) \to \A^1_k$ is flat, then
$$
\Spec(Q/f) \cap \Sing(f) = \Sing(Q/f),
$$ where $\Sing(f)$ denotes the set of points of 
  $\Spec(Q)$ at which the morphism $f: \Spec(Q) \to \A^1_k$ is not smooth. 
\end{rem}

%%%%%%
%%%%%%%%%

Let $R$ be another essentially smooth $k$-algebra, and let $g \in R$. Given $X \in mf(Q, f)$ and $Y \in mf(R, g)$, we form the tensor product
$$
X \otimes Y \in mf(Q \otimes_k R, f \otimes 1 + 1 \otimes g)
$$
by adapting the notion of tensor product of $\Z/2$-graded complexes to matrix factorizations. The tensor product gives a dg-functor
$$
mf(Q, f) \otimes_k mf(R, g) \to mf(Q \otimes_k R , f \otimes 1 + 1 \otimes g).
$$
If $Z$ and $W$ are closed subsets of $\Spec(Q)$ and $\Spec(R)$, respectively, one has an induced functor
$$
mf^Z(Q, f) \otimes_k mf^W(R, g) \to mf^{Z \times W}(Q \otimes_k R , f \otimes 1 + 1 \otimes g).
$$
If $Q = R$, composing with multiplication in $Q$ gives a functor
$$
mf^Z(Q, f) \otimes_k mf^W(Q, g) \to mf^{Z \cap W}(Q , f + g).
$$

We also have a duality functor $D$ which determines an isomorphism of dg-categories
$$
D: mf(Q,f)^\op \xra{\cong} mf(Q, -f).
$$
The functor $D$ sends an object $P = (P,\d_P)$ of $mf(Q,f)$ to the object $P^* = (P^*, -\d_P^*)$ of $mf(Q,-f)$, and it sends an element $\a$ of 
$\Hom(P_2, P_1)^\op = \Hom(P_1, P_2)$
to the element $\a^*$ of $\Hom(P_2^*, P_1^*)$. Note that $\a^*(\gamma) = (-1)^{|\a||\gamma|} \gamma \circ \a$. 
If $X \in mf^Z(X, f)^{\op}$ for some closed $Z \subseteq \Spec(Q)$, then $D(X) \in mf^Z(X, -f)$. If $X, Y \in mf(Q, f)$, there is a canonical isomorphism
$$
\Hom(X, Y) \cong D(X) \otimes Y.
$$
In particular, if $X \in mf^Z(Q, f)$ and $Y \in mf^W(Q, f)$, we have
\begin{equation}
\label{homeqn}
\Hom(X, Y) \in mf^{Z \cap W}(Q, 0).
\end{equation}

%%%%%%%%%%%%%%%%%%%%%%%
\subsection{The HKR map} 
\label{sec:HKR}
%%%%%%%%%%%%%%%%%%%%%%%

Assume for the rest of Section \ref{HHofmf} that $\on{char}(k) = 0$. Given a $\Z$-graded complex $(C^\bullet, d)$ of $k$-vector spaces, its \emph{$\Z/2$-folding} is the $\Z/2$-graded complex whose even (resp. odd) component is  $\bigoplus_{i \in \Z} C^{2i}$ (resp. $\bigoplus_{i \in \Z} C^{2i + 1}$) and whose differential is given by $d$. 

Let $\Omega^\bu_{Q/k}$ denote the $\Z/2$-graded commutative $Q$-algebra given by the $\Z/2$-folding of the
exterior algebra over $\Omega^1_{Q/k}$. That is, $\Omega^\ev_{Q/k} = \bigoplus_j \Omega^{2j}_{Q/k}$, and
$\Omega^\odd_{Q/k} = \bigoplus_j \Omega^{2j+1}_{Q/k}$.
We write $(\Omega_{Q/k}^\bu, -df)$  for the $\Z/2$-graded complex of $Q$-modules with
underlying graded $Q$-module
$\Omega^\bu_{Q/k}$ and with differential given by left multiplication by  $-df \in \Omega^1_{Q/k}$. 

Let $Z$ be a closed subset of $\Spec(Q/f)$.  The goal of the rest of this section is to study,
for each triple $(Q, f, Z)$, a Hochschild-Kostant-Rosenberg (HKR)-type map
\begin{equation}
\label{HKRintro}
\e_{Q,f,Z}: HH(mf^Z(Q,f)) \to \R\Gamma_Z (\Omega_{Q/k}^\bu, -df).
\end{equation}
Here, $\R\Gamma_Z$ is the right adjoint of the inclusion functor $D^Z_{\Z/2}(Q) \subseteq D_{\Z/2}(Q)$,
where $D_{\Z/2}(Q)$ denotes the derived category of $\Z/2$-graded $Q$-modules, and $D^Z_{\Z/2}(Q) \subseteq D_{\Z/2}(Q)$ the subcategory spanned by complexes
with support contained in $Z$. 
It will be convenient for us to use the following \v{C}ech model for $\R \Gamma_Z$. Choose $g_1, \dots, g_m \in Q$ such that $Z = V(g_1, \dots, g_m)$, and let
$$
\cC = \cC(g_1, \dots, g_m) = \bigotimes_j (Q \to Q[1/g_i])
$$
be the ($\Z/2$-folding of the) augmented \v{C}ech complex. It is well-known that $\cC \otimes_Q M$ models $\R\Gamma_Z(M)$ for any $M \in D_{\Z/2}(Q)$; i.e., the functor 
$$
\cC \otimes_Q - : D_{\Z/2}(Q) \to D^Z_{\Z/2}(Q)
$$
is right adjoint to the inclusion. From now on, given $g_1, \dots, g_m \in Q$ such that $V(g_1, \dots, g_m) = Z$, we will tacitly identify $\R\Gamma_Z(M)$ with $\cC \otimes_Q M$. Note that, for any $\Z/2$-graded complex $M$ of $Q$-modules that is supported in $Z$, the natural morphism of complexes
\begin{equation}
\label{cechaugmentation}
\cC \otimes_Q M \to M
\end{equation}
given by the tensor product of the augmentation map $\cC \to Q$ with $\id_M$ is a quasi-isomorphism. 

HKR maps for matrix factorization categories have been widely studied. Segal and C\u ald\u araru-Tu give such an HKR map, involving Hochschild homology of the second kind and without
a support condition, in \cite[Corollary 3.4]{segal} and \cite[Theorem 4.2]{CT}, respectively; Efimov generalizes this result to the non-affine setting in \cite[Proposition 3.21]{efimov}; and Preygel
gives a map just as in (\ref{HKRintro}) (but also in the not-necessarily-affine setting), and proves it is a quasi-isomorphism, in \cite[Theorem
8.2.6(iv))]{preygel}. But \cite{preygel} doesn't contain a concrete formula for where the HKR map (\ref{HKRintro}) sends an element of the bar complex computing
$HH(mf^Z(Q,f))$, and we will need such a formula later on. So, we develop our own version of (\ref{HKRintro}). 

%%%%%%%%%%%%%%%%%%%%%%%
\subsubsection{Quasi-matrix factorizations} 
%%%%%%%%%%%%%%%%%%%%%%%
Define a curved dg-category $qmf(Q,f)$, the category of \emph{quasi-matrix factorizations}, in the following way. 
\begin{itemize}
\item Objects $(P, \d_P)$ are defined in the same way as those of $mf(Q, f)$, except we remove the requirement that $\d_P^2$ is given by multiplication by $f$. 
\item Morphisms are defined in the same way as in $mf(Q, f)$. 
\item The curvature element of $\End_{qmf(Q, f)}(P, \d_P)$ is $\d_P^2 - f$. 
\end{itemize}
$mf(Q, f)$ is precisely the full subcategory of $qmf(Q,f)$ spanned by objects with trivial curvature. Let $qmf(Q,f)^0$ denote the full subcategory of $qmf(Q,f)$ spanned by those objects $(P, \d_P)$ such that $\d_P = 0$. Note that the curvature element of
an object in $qmf(Q,f)^0$ is $-f$.  The pair $(Q,0)$ determines an object of $qmf(Q,f)^0$, and its endomorphisms form the curved differential $\Z/2$-graded algebra $(Q, 0, -f)$. That is, we have inclusions
$$
mf(Q, f) \into qmf(Q, f) \hookleftarrow qmf(Q, f)^0 \hookleftarrow (Q, 0, -f).
$$
These functors are all \emph{pseudo-equivalences}, in the language of \cite[Section 1.5]{PP}, and so, by \cite[Lemma A, page 5319]{PP}, the induced maps 
$$
HH^{II}(mf(Q, f)) \to HH^{II}(qmf(Q, f)) \leftarrow HH^{II}(qmf(Q, f)^0 ) \leftarrow HH^{II}(Q, 0, -f)
$$
are all quasi-isomorphisms.

A key point is that 
there is a (non-strict) cdg-functor
$$
(F, \beta): qmf(Q,f) \to qmf(Q,f)^0
$$
given by $F(P, \d_P) = (P,0)$ and  $\beta_{(P,\d_P)} = \d_P$.
The induced map
$$
(F,\beta)_*: HH^{II}(qmf(Q,f)) \to HH^{II}(qmf(Q,f)^0)
$$
sends $\a_0[\a_1| \cdots | \a_n]$, where $\a_i \in \Hom((P_{i+1}, \delta_{i+1}),(P_{i}, \delta_{i}))$, to 
$$
\sum_{i_0, \dots, i_n \geq 0}  (-1)^{i_0 + \cdots + i_n} 
\a_0 [\overbrace{\d_1| \cdots | \d_1}^{i_0}| \a_1 |\overbrace{\d_2| \cdots | \d_2}^{i_1}| \cdots | \a_n | \overbrace{\d_0| \cdots | \d_0}^{i_n}].
$$

%%%%%%%%%%%%%%%%%%%%%
\subsubsection{The supertrace}
%%%%%%%%%%%%%%%%%%%%%

Given a $\Z/2$-graded finitely generated projective $Q$-module $P$, define the {\em supertrace} map
$$
\str: \End_Q(P) \to Q
$$
as the composition
$$
\End_Q(P) \cong P^* \otimes_Q P \xra{\gamma \otimes p \mapsto \gamma(p)} Q
$$
for homogeneous elements $\gamma$, $p$. Equivalently, for $\a \in \End_Q(P)$ we have
$$
\str(\a) = 
\begin{cases}
\tr(\a_0: P_0 \to P_0) - \tr(\a_1: P_1 \to P_1) , & \text{if $\a$ has degree $0$, and} \\
0 , & \text{if $\a$ has degree $1$.} \\
\end{cases}
$$
Here, $\tr$ is the classical trace of an endomorphism of a projective module. We extend $\str$ to
a map
$$
\End_{\Omega^\bu_{Q/k}}(P \otimes_Q \Omega^\bu_{Q/k}) \cong
\End_Q(P) \otimes_Q \Omega^\bu_{Q/k}  \xra{\str \otimes \id} \Omega^\bu_{Q/k},
$$
which we also write as $\str$. 

%%%%%%%%%%%%%%%%%%%%
\subsubsection{The HKR map without supports}
\label{nosupport}
%%%%%%%%%%%%%%%%%%%%
\begin{defn}
A \emph{connection} on an object $(P, \d_P) \in qmf(Q,f)$ is a $k$-linear map
$$
\nabla : P \to \Omega^1_{Q/k} \otimes_Q P
$$
of odd degree such that $\n(qp) = dq \otimes p + q\n(p)$, i.e. a \emph{superconnection}, in the language of \cite{quillen}. Notice that the definition does not involve $\d_P$. 
\end{defn}

Choose a connection $\n_P$ on each object $(P,0) \in qmf(Q,f)^0$; we stipulate that the connection chosen for $Q \in qmf(Q,f)^0$ is the canonical one
given by the de Rham differential, $d: Q \to \Omega^1_{Q/k}$. Define
$$
\e^0: HH^{II}(qmf(Q,f)^0)^\nat \xra{} \Omega_{Q/k}^\bu
$$
by
$$
\e^0(\a_0[\a_1| \cdots |\a_m]) = \frac{1}{m!} \str(\a_0 \a'_1 \cdots \a'_m),
$$
where, for $\a: (P_1, 0) \to (P_2, 0)$, we set $\a' = \n_{P_2} \circ \a- (-1)^{|\a|} \a \circ \n_{P_1}$. By \cite[Theorem 5.18]{BW}, $\e^0$ gives a chain map
$$
HH^{II}(qmf(Q,f)^0) \xra{} (\Omega_{Q/k}^\bu, -df).
$$
Then the composition
$$
\e^Q: HH^{II}(Q,0, -f) \xra{\simeq} HH^{II}(qmf(Q,f)^0) \xra{\e^0}  (\Omega_{Q/k}^\bu, -df),
$$
where the first map is induced by inclusion, is given by the classical HKR map
$$
\e^Q(q_0 [q_1 | \cdots |q_n]) = \frac{q_0 dq_1 \cdots dq_n}{n!} \in \Omega^n_{Q/k}.
$$
In particular, $\e^0$ is a quasi-isomorphism. $(F, \b)_*$ is also a quasi-isomorphism, since
$$
qmf(Q,f)^0 \xra{\simeq} qmf(Q,f) \xra{(F,\beta)} qmf(Q,f)^0
$$
is the identity.

We define the HKR map
$$
\e_{Q, f}: HH(mf(Q,f)) \to (\Omega_{Q/k}^\bu, -df)
$$
to be the composition
\begin{align*}
HH(mf(Q,f)) \xra{\can} HH^{II}(mf(Q,f)) \xra{\simeq}
HH^{II}(qmf(Q,f)) &  \xra{(F, \beta)_*} HH^{II}(qmf(Q,f)^0) \\
&  \xra{\e^0} (\Omega_{Q/k}^\bu, -df),
\end{align*}
where ``can" denotes the canonical map. A more explicit formula for $\e_{Q, f}$ is given as follows. Given objects $(P_0, \d_0), \dots, (P_n, \delta_n)$ of $mf(Q,f)$ and maps
$$
P_0 \xla{\a_0} P_1 \xla{\a_1} \cdots \xla{\a_{n-1}} P_n  \xla{\a_n} P_0,
$$
set $\nabla_i = \nabla_{P_i}$. Then 
$$
\e_{Q,f}(\a_0[\a_1 | \dots | \a_n]) 
=
\sum_{i_0, \dots, i_n \geq 0}  \frac{(-1)^{i_0 + \cdots + i_n}}{(n + i_0 + \cdots + i_n)!}
\str \left(\a_0 (\d_1')^{i_0} \a_1' \cdots (\d_{n}')^{i_{n-1}} \a_n' (\d_0')^{i_n} \right),
$$
where, just as above,
$$
\a_j' = \n_{j} \circ \a_j - (-1)^{|\a_j|} \a_j \circ \n_{j+1}
\, \text{(with $\n_{n+1} = \n_0$)},
$$
and
$$
\delta_j' =  [\n_j, \delta_i] = \n_j \circ \delta_j + \delta_j \circ \n_j.
$$

Note that the sum in this formula is finite, since $\Omega_{Q/k}^j = 0$ for $j > \dm(Q)$. 

Summarizing, we have a commutative diagram
\begin{equation}
\label{bigdiagram}
\xymatrix{
HH(mf(Q,f)) \ar[rrdd]^-{\e_{Q,f}} \ar[r] & HH^{II}(mf(Q,f)) \ar[r]^-{\simeq} & HH^{II}(qmf(Q,f)) \ar[d]_-\simeq^-{(F,\beta)_*} & \\
&& HH^{II}(qmf(Q,f)^0)\ar[d]^-{\e^0}_-\simeq & HH^{II}(Q, 0, -f) \ar[l]_-\simeq \ar[dl]^-{\e^Q}_-\simeq \\
&& (\Omega_{Q/k}, -df). \\
}
\end{equation}
Notice that this implies $\e_{Q,f}$ is independent, up to natural isomorphism in the derived category, of the choices of connections. In particular,
the map on homology induced by $\e_{Q,f}$ is independent of such choices.

We include the following result, although it will not be needed in this paper:
\begin{prop}
\label{HKRquasi}
If the only critical value of $f : \Spec(Q) \to \A^1$ is 0, $\e_{Q, f}$ is a quasi-isomorphism.
\end{prop}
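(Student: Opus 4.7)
The plan is to use the commutative diagram (\ref{bigdiagram}). That diagram factors $\e_{Q,f}$ as
$$
HH(mf(Q,f)) \xrightarrow{\can} HH^{II}(mf(Q,f)) \xrightarrow{\simeq} HH^{II}(qmf(Q,f)) \xrightarrow[\simeq]{(F,\beta)_*} HH^{II}(qmf(Q,f)^0) \xrightarrow[\simeq]{\e^0} (\Omega^\bullet_{Q/k}, -df),
$$
where the last three arrows have already been shown to be quasi-isomorphisms in the discussion preceding the statement (the last via the pseudo-equivalence with the cdga $(Q, 0, -f)$ and the classical HKR theorem in characteristic zero). So it remains to show that the canonical comparison map $\can: HH(mf(Q,f)) \to HH^{II}(mf(Q,f))$ is a quasi-isomorphism under the stated hypothesis.

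The hypothesis that $0$ is the only critical value of $f$ is equivalent to $\Sing(f) \subseteq f^{-1}(0)$, i.e., $f$ lies in the radical of the Jacobian ideal $J_f = (\partial f/\partial x_1, \ldots, \partial f/\partial x_n)$, so some power $f^N$ belongs to $J_f$. This ``critical locus is proper over $\Spec(k[f])$'' condition is precisely what forces $mf(Q,f)$ to behave like a smooth-and-proper dg-category: locally the diagonal bimodule admits a finite twisted-complex presentation by matrix factorizations over $Q \otimes_k Q$, and this boundedness guarantees that the direct sum and direct product totalizations of the Hochschild bicomplex agree up to quasi-isomorphism.

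To carry this out, I would first reduce to the case where $Q$ is local at a critical point (using that outside the critical locus $mf(Q,f)$ is contractible and both sides vanish there, together with Zariski descent for $HH$). Next, using a Koszul-type model for the diagonal as in Dyckerhoff's work, I would exhibit the bar complex computing $HH(mf(Q,f))$ as a bicomplex whose filtration by the Hochschild degree is complete and exhaustive with finite-dimensional associated graded in the appropriate sense, so that $\can$ induces an isomorphism on associated graded pieces and hence is a quasi-isomorphism. Alternatively, and more expeditiously, one may invoke Preygel's theorem (\cite[Theorem 8.2.6(iv)]{preygel}) or a result of Polishchuk-Positselski (\cite{PP}), both of which establish the corresponding HKR quasi-isomorphism statement in this generality.

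The main obstacle is the comparison of the sum and product totalizations. The hypothesis $f^N \in J_f$ is what makes this work: it ensures that the twisted-de-Rham differential $-df$ annihilates the higher-order terms in the completion filtration, making the inclusion of the direct-sum totalization into the direct-product totalization a quasi-isomorphism. Since the result is stated parenthetically and is not used in the sequel, citing \cite{preygel} or \cite{efimov} is the most economical choice.
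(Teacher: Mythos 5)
Your reduction is exactly the paper's: since the discussion preceding the proposition already shows that
$HH^{II}(mf(Q,f)) \xrightarrow{\simeq} HH^{II}(qmf(Q,f)) \xrightarrow[\simeq]{(F,\beta)_*} HH^{II}(qmf(Q,f)^0) \xrightarrow[\simeq]{\e^0} (\Omega^\bullet_{Q/k}, -df)$
are quasi-isomorphisms, the content of the proposition is that the canonical map $HH(mf(Q,f)) \to HH^{II}(mf(Q,f))$ is one. The paper proves this by citing \cite[Section 4.8, Corollary A]{PP} and then appealing to the commutativity of \eqref{bigdiagram} — which is precisely the ``more expeditious'' route you name at the end. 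So your proposal does land on the paper's proof.

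That said, the intermediate ``direct'' sketch you outline is not sound as written and would not constitute a proof if the citation were removed. The canonical map $\can$ is the identity on each column of the Hochschild bicomplex, so it automatically induces an isomorphism on the associated graded of the column filtration regardless of any hypothesis on $f$; the real issue is the convergence of the two spectral sequences, equivalently whether passing from the direct-sum to the direct-product totalization changes the homology. This is not settled by finite-dimensionality of the associated graded pieces — the bar bicomplex is unbounded in the Hochschild degree direction, and in general $HH$ and $HH^{II}$ genuinely differ (e.g.\ for $mf(Q,f)$ with $f$ having nonzero critical values, or for $f=0$). Likewise, the claim that ``$-df$ annihilates the higher-order terms in the completion filtration'' is not a precise statement and is not what underlies the Polishchuk--Positselski argument. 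The reduction to the local case via ``Zariski descent for $HH$'' and the contractibility of $mf$ away from the critical locus also needs more care than your one-line parenthetical suggests, since $mf(Q,f)$ is a single global category rather than a sheaf of categories, and the hypothesis only constrains critical \emph{values}, not the geometry of the critical locus. None of this affects your final answer because you explicitly defer to \cite{PP} (and \cite{preygel}, \cite{efimov}), but the heuristic middle paragraph should be read as motivation rather than as an alternate proof.
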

\begin{proof}
By \cite[Section 4.8, Corollary A]{PP}, the canonical map 
$$
HH(mf(Q, f)) \to HH^{II}(mf(Q, f))
$$
is a quasi-isomorphism. The statement therefore follows from the commutativity of diagram (\ref{bigdiagram}).
\end{proof}

%%%%%%%%%%%%%%%%%%%%%%
\subsubsection{The HKR map with supports} 
%%%%%%%%%%%%%%%%%%%%%%%
We now define the HKR map for a general closed subset $Z$ of $\Spec(Q)$. Composing
$\e_{Q,f}$ with the natural map induced by the inclusion $mf^Z(Q,f) \subseteq mf(Q,f)$ gives a map
\begin{equation}
\label{zag}
HH(mf^Z(Q,f)) \to (\Omega_{Q/k}^\bu, -df).
\end{equation}
By Proposition \ref{prop116a} (1) and (\ref{homeqn}),
if $X,Y \in mf^Z(Q,f)$, $\Hom(X,Y)$ is a complex of $Q$-modules whose support is contained
in $Z$. (When $f$ is a non-zero-divisor, this complex is in fact supported on $Z \cap \Sing(Q/f)$.)
It follows that each row of the bicomplex used to define 
$HH(mf^Z(Q,f))$ is supported on $Z$. Since $HH(mf^Z(Q,f)$ is the direct sum totalization of this bicomplex, we have that
$HH(mf^Z(Q,f))$ is supported on $Z$. Adjointness thus gives a canonical isomorphism
$$
\e_{Q,f,Z}: HH(mf^Z(Q,f)) \to \R\Gamma_Z (\Omega_{Q/k}^\bu, -df)
$$
in $D(Q)$. 
In other words, $\e_{Q,f,Z}$ is represented in $D(Q)$ by the diagram
$$
\xymatrix{
HH(mf^Z(Q,f)) & \ar[l]^-{\simeq}_-{(\ref{cechaugmentation})} \R\G_Z HH(mf^Z(Q,f)) \ar[r]^-{(\ref{zag})}
&\R\Gamma_Z (\Omega_{Q/k}^\bu, -df).
}
$$
We will sometimes refer to $\e_{Q,f, Z}$ as just $\e$, if no confusion can arise. 
%%%%%%%%%

%%%%%%%%%%%%%%%%%%%%%%%%%%%%%%%%%%%%%%%
\subsection{Relationship between the HKR map and the map $I_f(0)$}
%%%%%%%%%%%%%%%%%%%%%%%%%%%%%%%%%%%%%%%

When $Q = \C[x_1, \dots, x_n]$ and $\fm = (x_1, \dots, x_n)$ 
is the only singular point of the map $f: \A^n_\C \to \A^1_\C$, Shklyarov defines in \cite[Section 4.1]{Shklyarov:higherresidues}
an isomorphism
$$
I_f(0): HH_*(mf(Q,f)) \xra{\cong} H_*(\Omega_{Q/k}^\bu, -df)
$$
as follows. Let $\cA_f$ be the endomorphism dga of the following matrix factorization $(P, \d_P)$ which represents the residue field $Q/\fm$ in the singularity category of $Q/f$: choose 
polynomials $y_1, \dots,
y_n \in Q$ so that $f = \sum_i x_i y_i$, let $P$ be the $\Z/2$-graded exterior algebra over $Q$ on generators $e_1, \dots, e_n$, and define a differential on $P$ given by
$$
\d_P = \sum_i
x_i e_i^* + y_i e_i.
$$
Here, $e_i^*$ is the $Q$-linear derivation of $P$ determined by $e_i^*(e_j) = \d_{ij}$.
By a theorem of Dyckerhoff (\cite[Theorem 5.2 (3)]{dyckerhoff}), the inclusion
$$
\iota: \cA_f \into mf(Q, f)
$$
is a Morita equivalence. Since Hochschild homology is Morita invariant, the induced map
$$
\iota_*: HH_*(\cA_f) \xra{\cong} HH_*(mf(Q,f))
$$
is an isomorphism. 

From now on, we identify $P$ with $Q \otimes_\C \L$, where $\L = \Lambda_\C(e_1, \dots, e_n)$, and $\cA_f$ with $Q \otimes_\C \End_\C(\L)$. Shklyarov defines a quasi-isomorphism
$$
\alpha: HH(\cA_f) \xra{\simeq} (\Omega_{Q/k}^\bu, -df)
$$
as the composition
$$
 HH(\cA_f) \xra{\exp(-1[\d_P])}  HH^{II}(\cA_f) \xra{\e'} (\Omega_{Q/k}^\bu, -df),
$$
where 
$$
\e'(q_0 \otimes \a_0 [q_1 \otimes \a_1| \cdots |q_n \otimes \a_n])
= \frac{(-1)^{\sum_{\text{ $i$ odd}} |\a_i|}}{n!} \str(\a_0 \cdots \a_n)q_0 dq_1 \cdots dq_n.
$$
Finally, $I_f(0)$ is the composition
$$
HH_*(mf(Q,f)) \xra{\iota_*^{-1}}
HH_*(\cA_f) \xra{\alpha}
H_*(\Omega_{Q/k}^\bu, -df).
$$

\begin{lem}
\label{If}
 The map $\e'$ coincides with the map $\e_{Q,f}$ restricted to $HH(\End(P))$ for the choice of connection $\n_P$ defined as
$\n_P(q \otimes \a) = dq \otimes \a$.
Thus, $I_f(0) = \e_{Q,f}$.
\end{lem}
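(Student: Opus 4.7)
The plan is to unwind the construction of $\e_{Q,f}$, restrict it to the single-object full subcategory spanned by $(P, \d_P) \subseteq mf(Q, f)$, and match the result with Shklyarov's $\alpha = \e' \circ \exp(-1[\d_P])$ via a direct comparison of formulas. I would first restrict diagram (\ref{bigdiagram}) to the cdga $\End(P)$, which factors $\e_{Q,f} \circ \iota_*$ as
\[
HH(\End(P)) \xra{\can} HH^{II}(\End(P)) \xra{(F,\b)_*} HH^{II}(\End_Q(P), 0, -f) \xra{\e^0} (\Omega^\bu_{Q/k}, -df).
\]
On this single object, $(F, \b)$ specializes to the non-strict morphism of cdga's $(\id, \d_P): (\End(P), \d_P, 0) \to (\End_Q(P), 0, -f)$, and formula (\ref{1029v}) identifies its induced map on $HH^{II}$ with multiplication by the central element $\exp(1[-\d_P]) = \exp(-1[\d_P])$. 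This matches the first factor of $\alpha$ on the nose, so it remains to show that $\e^0$ coincides with Shklyarov's $\e'$ at the level of the underlying graded $k$-vector space of $HH^{II}(\End_Q(P), 0, -f) = HH^{II}(\End(P))^\nat$.

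For this, I would plug the chosen connection $\n_P(r \otimes v) = dr \otimes v$ into the defining formula for $\e^0$. Writing elements of $\End_Q(P) = Q \otimes_\C \End_\C(\L)$ as $\a_i = q_i \otimes \phi_i$, a direct computation shows $[\n_P, \a_i] = dq_i \otimes \phi_i$ as an element of $\End_Q(P) \otimes_Q \Omega^1_{Q/k}$, so
\[
\e^0(\a_0[\a_1 | \cdots | \a_n]) = \frac{1}{n!} \str\bigl((q_0 \otimes \phi_0)(dq_1 \otimes \phi_1) \cdots (dq_n \otimes \phi_n)\bigr).
\]
Expanding this product in $\End_Q(P) \otimes_Q \Omega^\bu_{Q/k}$ using the Koszul sign rule, separating the form and endomorphism factors, and then applying $\str \otimes \id$ should yield
\[
\frac{(-1)^{\sum_{i \text{ odd}} |\a_i|}}{n!} \str(\phi_0 \phi_1 \cdots \phi_n) \cdot q_0 dq_1 \cdots dq_n,
\]
which is precisely the formula defining $\e'$. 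Combining with the preceding paragraph, $\e_{Q,f} \circ \iota_* = \e^0 \circ \exp(-1[\d_P]) = \e' \circ \exp(-1[\d_P]) = \alpha$, and since $\iota_*$ is an isomorphism and $I_f(0) = \alpha \circ \iota_*^{-1}$, we obtain $I_f(0) = \e_{Q,f}$.

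The hard part will be the sign bookkeeping in the middle step. Several conventions contribute signs that must be reconciled simultaneously: the degree shift $\Sigma$ on the bar factors, the odd degree of the one-forms $dq_i$, the ordering convention $\End \otimes \Omega$ versus $\Omega \otimes \End$, and the sign $(-1)^{|\a||\omega|}$ picked up when extending an endomorphism $\a$ of $P$ to an $\Omega^\bu$-linear operator on $\Omega^\bu \otimes P$. The content of the lemma is that these signs collapse to precisely $(-1)^{\sum_{i \text{ odd}} |\a_i|}$ once the common prefactor $q_0 dq_1 \cdots dq_n$ and the common supertrace $\str(\phi_0 \cdots \phi_n)$ have been extracted.
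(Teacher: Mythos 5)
Your proposal is correct and takes essentially the same route as the paper: reduce to checking $\e^0 = \e'$ by observing that $(F,\b)_*$ restricted to $\End(P)$ is $\exp(-1[\d_P]) \star -$, then carry out the sign computation. The paper's printed proof records only that final sign computation and leaves the preliminary reduction implicit; you spell that reduction out, but the underlying argument is the same.
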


\begin{proof} We have
$$
\begin{aligned}
\e_{Q,f}(q_0 \otimes \a_0 [q_1 \otimes \a_1| \cdots |q_n \otimes \a_n])
& = \frac{1}{n!} \str((q_0 \otimes \a_0) (dq_1 \otimes \a_1) \cdots (dq_n \otimes \a_n)) \\
& = \frac{(-1)^{\sum_i i |\a_i| }}{n!} \str(\a_0 \cdots \a_n) q_0 dq_1 \cdots dq_n \\
& = \frac{(-1)^{\sum_{\text{ $i$ odd}} |\a_i|}}{n!} \str(\a_0 \cdots \a_n)q_0 dq_1 \cdots dq_n. \\
\end{aligned}
$$
\end{proof}

%%%%%%%%%%%%%%%%%%%%%%%%%%%
\subsection{Compatibility of the HKR map with taking duals}
\label{duals}
%%%%%%%%%%%%%%%%%%%%%%%%%%%%%%%%%
Shklyarov proves in \cite[Proposition 3.2]{Shklyarov:Toward} that, for any differential $\Z/2$-graded algebra $\cA$, there is a canonical isomorphism of complexes
\begin{equation} \label{E1215c}
\Phi: HH(\cA) \xra{\cong} HH(\cA^\op)
\end{equation}
given by
$$
a_0[a_1| \cdots |a_n] \mapsto (-1)^{n + \sum_{1 \leq i < j \leq n} (|a_i| - 1)(|a_j| -1)} a^\op_0[a^\op_n| \cdots | a^\op_1],
$$
where, for $a \in \cA$, $a^\op$ denotes $a$ regarded as an element of $\cA^{\op}$. The same formula gives an isomorphism
$$
HH(\cC) \xra{\cong} HH(\cC^\op)
$$
for any curved differential $\G$-graded category $\cC$, where $\G \in \{\Z, \Z/2 \}$.

Composing $\Phi$ and $D$, where $D$ is the dualization functor defined in Section \ref{mf}, we obtain the isomorphism of complexes
\begin{equation} \label{E1215d}
\Psi: HH(mf^Z(Q,f)) \xra{\cong}  HH(mf^Z(Q,-f))
\end{equation}
given explicitly by
$$
\Psi(a_0[a_1| \cdots |a_n]) = 
(-1)^{n + \sum_{1 \leq i < j \leq n} (|a_i| - 1)(|a_j| -1)} a^*_0[a^*_n| \cdots | a^*_1].
$$

\begin{lem}\label{lem1129}
  The diagram
$$
\xymatrix{
HH(mf^Z(Q,f)) \ar[d]^-{\Psi} \ar[r]^-{\e_{Q,f,Z}} & \R\Gamma_Z (\Omega_{Q/k}^\bu, -df) \ar[d]^-\gamma \\
HH(mf^Z(Q,- f)) \ar[r]^-{\e_{Q,-f, Z}} & \R\Gamma_Z  (\Omega_{Q/k}^\bu, df) \\
}
$$
commutes in $D(Q)$, where $\gamma$ is $\R\Gamma_Z$ applied to the map whose restriction to $\Omega^j_{Q/k}$ is multiplication by $(-1)^j$ for all $j$. 
\end{lem}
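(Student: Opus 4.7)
The plan is to reduce the statement to an explicit sign computation at the chain level, using the formulas for the HKR map developed in Section \ref{nosupport}.

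First, I would reduce to the setting without supports. Each morphism complex $\Hom(X,Y)$ for $X,Y \in mf^Z(Q,f)$ is supported on $Z$ by Proposition \ref{prop116a}(1) together with \eqref{homeqn}, so the complex $HH(mf^Z(Q,f))$ is already supported on $Z$; under the \v{C}ech augmentation quasi-isomorphism \eqref{cechaugmentation}, the map $\e_{Q,f,Z}$ is identified with the composite $HH(mf^Z(Q,f)) \hookrightarrow HH(mf(Q,f)) \xra{\e_{Q,f}} (\Omega_{Q/k}^\bu, -df)$, and similarly for $\e_{Q,-f,Z}$. Because $\Psi$ visibly preserves the support condition (duality preserves support), the problem reduces to showing that the analogous square without $\R\Gamma_Z$ commutes in $D(Q)$.

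Second, I would factor $\Psi = D_* \circ \Phi$, where $\Phi$ is the reversal isomorphism \eqref{E1215c} and $D$ is the strict cdg-isomorphism from Section \ref{mf}, and fix compatible connections. For each $(P,0) \in qmf(Q,f)^0$ I fix a connection $\nabla_P$, and on the dual object $(P^*,0) \in qmf(Q,-f)^0$ I take as chosen connection $\nabla_{P^*}$ the dual connection, characterized by the Leibniz-type rule $d(\gamma(p)) = (\nabla_{P^*}\gamma)(p) + (-1)^{|\gamma|}\gamma(\nabla_P p)$. Diagram \eqref{bigdiagram} then lets me verify the desired commutativity at the chain level on the quasi-matrix-factorization side once these choices are in place.

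Third, I would carry out the sign calculation. A direct verification using the Leibniz rule for $\nabla_{P^*}$ gives identities of the shape
\[
(\alpha^*)' = (-1)^{|\alpha|+1}(\alpha')^*, \qquad (\delta_{P^*})' = -(\delta_P')^*,
\]
where the right-hand sides are interpreted as dualizations of $\Omega^\bu_{Q/k}$-valued morphisms. Combining these with the transposition rule $(AB)^* = (-1)^{|A||B|} B^*A^*$, the cyclic property $\str(AB) = (-1)^{|A||B|}\str(BA)$, the identity $\str(A^*) = \str(A)$, and the explicit Koszul sign $(-1)^{n + \sum_{i<j}(|a_i|-1)(|a_j|-1)}$ that appears in $\Phi$, one rewrites $\e_{Q,-f}(\Psi(\alpha_0[\alpha_1|\cdots|\alpha_n]))$ as a sum of supertraces of the dualized, reversed products of the same factors that appear in the formula for $\e_{Q,f}(\alpha_0[\alpha_1|\cdots|\alpha_n])$. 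Repeated use of $\str(AB) = \pm\str(BA)$ to restore the original cyclic order then collapses all accumulated Koszul signs to a single factor of $(-1)^j$ on the component living in $\Omega^j_{Q/k}$, matching the action of $\gamma$.

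The main obstacle is precisely this sign bookkeeping in the third step: four independent sources of Koszul signs (the reversal in $\Phi$, the transposition rule for $*$, the cyclic property of $\str$, and the identities relating $(-)'$ with $(-)^*$) must conspire to produce a sign depending only on the output form degree. To keep the calculation tractable I would first verify the identity in the cases $n=0$ and $n=1$, which fixes the conventions for the dual connection and makes the cancellation pattern explicit, and then deduce the general case from the uniform Koszul structure of the formulas in Section \ref{nosupport}.
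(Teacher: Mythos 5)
Your approach is a genuinely different route from the paper's, and the comparison is worth spelling out. Both proofs begin the same way: you reduce to $Z=\Spec(Q)$ using that all the Hochschild complexes involved are already supported on $Z$, and you pass through diagram \eqref{bigdiagram} to express $\e_{Q,\pm f}$ via $(F,\beta)_*$ and $\e^0$. The divergence is in what happens next. You propose to verify the chain-level commutativity directly on $HH^{II}(qmf(Q,\pm f)^0)$ by choosing dual connections and grinding through supertrace and connection sign identities. The paper instead observes that $\e^0$ restricted to the quasi-isomorphic subcomplex $HH^{II}(Q,0,\mp f)$ is the \emph{classical} HKR map $\e^Q$, which involves only the de Rham differential on $Q$ and no supertraces or noncanonical connections; it then checks (diagram \eqref{E1128}) that $\Psi$ is compatible with this reduction. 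The final verification in the paper is a two-line computation using nothing but graded commutativity of $\Omega^\bu_{Q/k}$. So the paper's method buys you a dramatic simplification: the supertrace, the dual connections, and the four independent Koszul-sign sources you enumerate all disappear from the final step.

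That said, there appears to be a concrete error in the sign identities you state, which you would need to correct before the computation goes through. Take $n=1$. One has $\Psi(\a_0[\a_1]) = -\a_0^*[\a_1^*]$ and $\gamma\e^0(\a_0[\a_1]) = -\str(\a_0\a_1')$. Using your identity $(\a_1^*)' = (-1)^{|\a_1|+1}(\a_1')^*$ together with $(AB)^*=(-1)^{|A||B|}B^*A^*$, $\str(X^*)=\str(X)$, and the cyclic rule $\str(AB)=(-1)^{|A||B|}\str(BA)$, one finds
\[
\e^0\bigl(\Psi(\a_0[\a_1])\bigr) = -\str\bigl(\a_0^*(\a_1^*)'\bigr) = (-1)^{|\a_1|}\str(\a_0\a_1'),
\]
which agrees with $-\str(\a_0\a_1')$ only when $|\a_1|$ is odd. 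Redoing the check with the \emph{sign-free} identity $(\a^*)'=(\a')^*$ makes both the $n=1$ and $n=2$ cases come out correctly, so the factor $(-1)^{|\a|+1}$ appears spurious (with the natural conventions; if your convention for dualizing $\Omega^1_{Q/k}$-valued morphisms secretly absorbs that sign, you need to say so explicitly, as it will interact nontrivially with the transposition and cyclic rules further down). You are candid that the sign bookkeeping is the main obstacle and that $n=0,1$ should be checked first to pin the conventions, which is the right instinct; but as written the stated identity would derail the general argument. One further point: your sketch does not isolate the verification that $\Psi$ commutes with the map $\theta$ (i.e.\ with the insertion of $\delta$'s coming from $(F,\beta)_*$), which requires the $(-1)^I$ bookkeeping carried out in the paper's diagram \eqref{E1128}; it is folded implicitly into your step 3 but deserves to be made explicit, since the minus sign in $\delta_{P^*}=-\delta_P^*$ is exactly what makes that square commute.
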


\begin{proof} 
The map $\e_{Q, f, Z}$ factors as
$$
HH(mf^Z(Q,f)) \to \R\Gamma_Z HH(mf(Q,f)) \xra{\e_{Q, f}} (\Omega_{Q/k}^\bu, -df),
$$
where the first map is the canonical one. $\e_{Q,- f, Z}$ factors similarly. Since the diagram
$$
\xymatrix{
HH(mf^Z(Q,f)) \ar[d]^-{\Psi} \ar[r] & \R\Gamma_Z HH(mf(Q,f)) \ar[d]^-{\R \G_Z (\Psi)} \\
HH(mf^Z(Q,- f)) \ar[r] & \R\Gamma_Z  HH(mf(Q,-f)) \\
}
$$
evidently commutes, we may assume $Z = \Spec(Q)$. 

Recall from (\ref{bigdiagram}) that $\e_{Q, f}$ fits into a commutative diagram
\begin{equation} \label{E1119b}
\xymatrix{
HH(mf(Q,f)) \ar[r]^-\theta \ar[dr]_-{\e_{Q,f}} & HH^{II}(qmf(Q,f)^0) \ar[d]^-{\e^0}_-\simeq & HH^{II}(Q, 0, -f) \ar[l]_-\can^-\simeq \ar[dl]^-{\e^Q}_-\simeq \\
& (\Omega^\bu_{Q/k}, -df), \\
}
\end{equation}
where 
$$
\theta(\a_0[\a_1| \cdots | \a_n]) = 
\sum_{i_0, \dots, i_n \geq 0}  (-1)^{i_0 + \cdots + i_n} 
\a_0 [\d_1^{i_0}| \a_1 |\d_2^{i_1}| \cdots | \a_n | \d_0^{i_n}].
$$
Here, $\d^i$ stands for $\overbrace{\d| \cdots | \d}^i$. 

The map $\Psi$ extends to a map 
$$
\Psi: HH^{II}(qmf(Q,f)^0) \to HH^{II}(qmf(Q, -f)^0)
$$ 
using the same formula, and this map in turn restricts to a map
$$
\Psi: HH^{II}(Q,0, -f) \to HH^{II}(Q,0, f)
$$
given by 
$$
\Psi(q_0[q_1 | \cdots |q_n]) = (-1)^{n + {n \choose 2}} q_0[q_n| \cdots | q_1].
$$
We claim that the diagram
\begin{equation} \label{E1128}
\xymatrix{
HH(mf(Q,f)) \ar[r]^-\theta \ar[d]^\Psi & HH^{II}(qmf(Q,f)^0) \ar[d]^-\Psi & HH^{II}(Q, 0, -f) \ar[l]_-\can^-\simeq \ar[d]^-\Psi \\
HH(mf(Q,-f)) \ar[r]^-\theta  & HH^{II}(qmf(Q,-f)^0) & HH^{II}(Q, 0, f) \ar[l]_-\can^-\simeq \\
}
\end{equation} 
commutes. This is evident for the right square. As for the left, the element $\a_0[\a_1| \cdots | \a_n]$ is mapped via $\Psi \circ \theta$ to 
$$
\sum_{i_0, \dots, i_n \geq 0}  (-1)^{I}  (-1)^{n + I + \sum_{1 \leq i < j \leq n} (|\a_i| - 1)(|\a_j| -1)} 
\a_0^* [(\d^*_0)^{i_n} | \a_n^* | \cdots | (\d_2^*)^{i_1} | \a^*_1 | (\d_1^*)^{i_0}],
$$
where $I = i_0 + \cdots + i_n$. The sign is correct since $|\d_i| -1$ is even for all $i$. The map $\theta \circ \Psi$ sends 
$\a_0[\a_1| \cdots |\a_n]$ to
$$
\sum_{j_0, \dots, j_n \geq 0}  (-1)^{J} 
(-1)^{n + \sum_{1 \leq i < j \leq n} (|\a_i| - 1)(|\a_j| -1)} \a^*_0[(-\d^*_0)^{j_0}| \a^*_n| 
\cdots | (-\d_2^*)^{j_{n-1}} | \a^*_1 | (-\d_1^{j_n})^*],
$$
where $J = j_0 + \cdots + j_n$. The reason for the minus sign in $(-\d_j^*)^i$ is that the differential of $(P, d)^*$ is $-d^*$. Since these two expressions are equal, the left square commutes.

Using the commutativity of the diagrams  \eqref{E1119b} and \eqref{E1128}, it suffices to prove that the square
$$
\xymatrix{
HH^{II}(Q,0,-f) \ar[r]^\Psi \ar[d]^{\e^Q} & HH^{II}(Q,0,f) \ar[d]^{\e^Q}  \\
(\Omega^\bu_{Q/k}, -df) \ar[r]^\gamma &(\Omega^\bu_{Q/k}, df) \\
}
$$
commutes. This holds since $\Omega^\bu_{Q/k}$ is graded commutative, so that
$$
\begin{aligned}
\gamma \e^Q(q_0[q_1 | \cdots |q_n]) & = \frac{(-1)^n}{n!} q_0 dq_1 \cdots dq_n \\
& = \frac{(-1)^{n + {n \choose 2}}}{n!} q_0 dq_n \cdots dq_1 \\
& = \e^Q \Psi(q_0[q_1 | \cdots |q_n]). \\
\end{aligned}
$$
\end{proof}

%%%%%%%%%%%%%%%%%%%%%%%%%
\subsection{Multiplicativity of the HKR map}
%%%%%%%%%%%%%%%%%%%%%%%%%%
Let $(Q,f, Z)$ and $(R, g, W)$ be triples consisting of an essentially smooth $k$-algebra, an element of the algebra, and a closed subset of the spectrum of
the algebra. The tensor product of matrix factorizations (Section \ref{mf}), along with the K\"unneth map for Hochschild homology of dg-categories (Section \ref{kunnethcdgc}), gives a pairing
\begin{equation} \label{E1215a}
- \tstar - : HH(mf^Z(Q,f)) \otimes_k HH(mf^{W}(R,g)) \to HH(mf^{Z \times W}(Q \otimes_k R, f \otimes 1 + 1 \otimes g)).
\end{equation}

%%%%%%%
%%%%%%%
Write $f+g$ for the element $f \otimes 1 + 1 \otimes g \in Q \otimes_k R$. Multiplication in $\Omega^\bu_{Q \otimes_k R/k}$ defines a pairing of complexes of $Q
\otimes_k R$-modules
$$
- \smsh - : (\Omega_{Q/k}^\bu, -df) \otimes_k (\Omega^\bu_{R/k}, -dg)\to (\Omega^\bu_{Q \otimes_k R/k}, -df - dg).
$$
We compose this with the canonical maps $\R \Gamma_Z (\Omega_{Q/k}^\bu, -df) \to (\Omega_{Q/k}^\bu, -df)$ and
$\R \Gamma_W (\Omega^\bu_{R/k}, -dg) \to (\Omega^\bu_{R/k}, -dg)$ to obtain the map
$$
\R\Gamma_Z (\Omega_{Q/k}^\bu, -df) \otimes_k \R \Gamma_W (\Omega^\bu_{R/k}, -dg) \to (\Omega^\bu_{Q \otimes_k R/k}, -df - dg).
$$
The source of this map is supported on the closed subset $Z \times W$ of $\Spec(Q \otimes_k R) = \Spec(Q) \times_k \Spec(R)$. Thus, by adjointness, we obtain a pairing
\begin{equation} \label{E1215b}
- \smsh -: \R\Gamma_Z (\Omega_{Q/k}^\bu, -df) \otimes_Q \R \Gamma_W (\Omega^\bu_{R/k}, -dg) \to \R\G_{Z \times  W} (\Omega^\bu_{Q \otimes_k R/k},-df-dg).
\end{equation}

A key fact is that the pairings \eqref{E1215a} and \eqref{E1215b} are compatible via the HKR maps:

\begin{prop} \label{prop112}
  The diagram
$$
\xymatrix{
  HH(mf^Z(Q,f)) \otimes_k  HH(mf^W(R,g)) \ar[rr]^-{\e_{Q, f, Z} \otimes \e_{R, f, W}} \ar[d]_-{- \tstar -}
  & &\R\G_Z (\Omega_{Q/k}^\bu, -df)  \otimes_k \R\G_W (\Omega^\bu_{R/k}, -dg) \ar[d]^-{\smsh} \\
  HH(mf^{Z \times W}(Q \otimes_k R, f+g))  \ar[rr]^-{\e_{Q \otimes_k R, f + g, Z \times W}} && \R\Gamma_{Z \times W} (\Omega^\bu_{Q \otimes_k R/k}, -df -dg)  \\
}
$$
in $D(Q \otimes_k R)$ commutes.
\end{prop}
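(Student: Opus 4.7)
The plan is to reduce the claim to the classical multiplicativity of the Hochschild-Kostant-Rosenberg map, by peeling apart the factorization of $\e_{Q,f}$ given in diagram (\ref{bigdiagram}) and applying the naturality of the K\"unneth map established in Lemma \ref{lem112}.

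First I would reduce to the case $Z = \Spec(Q)$ and $W = \Spec(R)$. By construction, $\e_{Q,f,Z}$ is obtained from $\e_{Q,f}$ by applying $\R\Gamma_Z$ and inverting the quasi-isomorphism (\ref{cechaugmentation}), and the supported exterior product (\ref{E1215b}) is defined by $\R\Gamma$-adjunction from its unsupported counterpart. Since $\tstar$ is natural with respect to the inclusions $mf^Z(Q,f) \into mf(Q,f)$ and $mf^W(R,g) \into mf(R,g)$, the supported diagram commutes in $D(Q \otimes_k R)$ as soon as its unsupported analogue does. Thus it suffices to prove the compatibility without supports.

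Next I would decompose $\e_{Q,f}$ using (\ref{bigdiagram}) as the zig-zag
\begin{equation*}
HH(mf(Q,f)) \to HH^{II}(mf(Q,f)) \xra{\simeq} HH^{II}(qmf(Q,f)) \xra{(F,\beta)_*} HH^{II}(qmf(Q,f)^0) \xla{\simeq} HH^{II}(Q,0,-f) \xra{\e^Q} (\Omega^\bu_{Q/k}, -df),
\end{equation*}
and analogously for $\e_{R,g}$ and $\e_{Q \otimes_k R, f+g}$. The tensor product of matrix factorizations extends compatibly to $qmf$, to $qmf^0$, and at the cdga level via $(Q,0,-f) \otimes_k (R,0,-g) = (Q \otimes_k R, 0, -f-g)$. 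The first, second, and fourth arrows in the zig-zag are induced by strict cdg-functors, so Lemma \ref{lem112}(3) gives their compatibility with $\tstar$. For the non-strict functor $(F,\beta)$, the defining data $\beta_{(P,\d_P)} = \d_P$ satisfy the crucial identity $\beta_{(P \otimes P',\, \d_P \otimes 1 + 1 \otimes \d_{P'})} = \beta_P \otimes \id + \id \otimes \beta_{P'}$, which is precisely the hypothesis of Lemma \ref{lem112}(1), so Lemma \ref{lem112}(2) applies and yields compatibility of $(F,\beta)_*$ with $\tstar$.

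It then remains to prove that the classical HKR map $\e^Q(q_0[q_1|\cdots|q_n]) = \frac{1}{n!} q_0\, dq_1 \cdots dq_n$ intertwines the K\"unneth/shuffle product on Hochschild chains with exterior multiplication of forms. This is standard: for a $(p,q)$-shuffle $\sigma$, the graded commutativity of $\Omega^\bu_{Q \otimes_k R/k}$ gives, with matching Koszul signs, $dq_{\sigma(1)} \cdots dq_{\sigma(p+q)} = \pm\, (dq_1 \cdots dq_p) \wedge (dq_{p+1} \cdots dq_{p+q})$, and the combinatorial factor $\binom{p+q}{p}$ of shuffles exactly converts $\frac{1}{(p+q)!}$ into $\frac{1}{p!\, q!}$. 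Assembling these steps yields the proposition.

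I expect the main obstacle to be sign bookkeeping and the verification that the arbitrary connection choices used to define $\e^0$ can be taken compatibly on tensor products. Because $\e_{Q,f}$ in $D(Q)$ is independent of the connection, one may first choose connections $\nabla_P$ on each $(P,0) \in qmf(Q,f)^0$ and $\nabla_{P'}$ on each $(P',0) \in qmf(R,g)^0$, and then declare $\nabla_{P \otimes P'} := \nabla_P \otimes \id + \id \otimes \nabla_{P'}$. With this choice, the multiplicativity $\str(A \otimes B) = \str(A)\cdot\str(B)$ for graded endomorphisms, combined with the shuffle expansion described above, reduces the compatibility of $\e^0$ with $\tstar$ directly to the classical case handled by $\e^Q$, with all remaining signs controlled by the Koszul rule.
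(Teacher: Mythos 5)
Your proposal is correct and follows essentially the same route as the paper: reduce to the unsupported case, decompose $\e$ via diagram (\ref{bigdiagram}), use Lemma \ref{lem112} for compatibility of the K\"unneth map with the factorization, and reduce to multiplicativity of the classical HKR map $\e^Q$. The only noteworthy difference is at the final stage: you propose to make (\ref{dia4}) commute on the nose by choosing tensor-product connections $\nabla_{P\otimes P'}=\nabla_P\otimes\id+\id\otimes\nabla_{P'}$ (which works, though one should note the connections only need to be chosen this way on objects in the image of the K\"unneth map), whereas the paper sidesteps the choice of connections entirely by precomposing (\ref{dia4}) with the quasi-isomorphism square (\ref{dia5}) whose horizontal arrows are invertible in the derived category, reducing directly to the well-known multiplicativity of the classical HKR map for cdga's of the form $(Q,0,-f)$.
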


\begin{proof} 
It is enough to show the diagrams
\begin{equation}
\label{dia1}
\xymatrix{
  HH(mf^Z(Q,f)) \otimes_k  HH(mf^W(R,g)) \ar[r] \ar[d]_-{- \tstar -}
  & \R\G_Z  HH(mf(Q,f)) \otimes_k  \R\G_W HH(mf(R,g)) \ar[d]_-{- \tstar -}\\
  HH(mf^{Z \times W}(Q \otimes_k R, f+g))  \ar[r] & \R\Gamma_{Z \times W} HH(mf(Q \otimes_k R, f+g))  \\
}
\end{equation}
and 
\begin{equation}
\label{dia2}
\xymatrix{
  \R\G_Z  HH(mf(Q,f)) \otimes_k  \R\G_W HH(mf(R,g)) \ar[d]_-{- \tstar -} \ar[r]%^-{\R \G_Z(\e_{Q,f}) \otimes \R\G_W(\e_{R, g})} 
\ar[d]_-{- \tstar -}
  & \R\G_Z (\Omega_{Q/k}^\bu, -df)  \otimes_k \R\G_W (\Omega^\bu_{R/k}, -dg) \ar[d]^-{\smsh} \\
  \R\Gamma_{Z \times W} HH(mf(Q \otimes_k R, f+g))   \ar[r]%^-{\R\G_{Z \times W}(\e_{Q \otimes_k R, f + g})} 
& \R\Gamma_{Z \times W} (\Omega^\bu_{Q \otimes_k R/k}, -df -dg)  \\
}
\end{equation} 
commute. Here, the right-most vertical map in (\ref{dia1}) (which coincides with the left-most vertical map in (\ref{dia2})) is defined in a manner similar to the map (\ref{E1215b}), and the horizontal maps in (\ref{dia1}) are the canonical ones. The commutativity of (\ref{dia1}) is clear. As for (\ref{dia2}), it suffices to show the diagram
$$
\xymatrix{
 HH(mf(Q,f)) \otimes_k  HH(mf(R,g)) \ar[d]_-{- \tstar -} \ar[rrr]^-{\e_{Q,f} \otimes \e_{R, g}} \ar[d]_-{- \tstar -}
  &&&(\Omega_{Q/k}^\bu, -df)  \otimes_k (\Omega^\bu_{R/k}, -dg) \ar[d]^-{\smsh} \\
HH(mf(Q \otimes_k R, f+g))   \ar[rrr]^-{\e_{Q \otimes_k R, f + g}} &&&  (\Omega^\bu_{Q \otimes_k R/k}, -df -dg)  \\
}
$$
in $D(Q \otimes_k R)$ commutes. Factoring the HKR maps as in diagram (\ref{bigdiagram}), it suffices to show the squares
\begin{equation}
\label{dia3}
\xymatrix{
HH(mf(Q,f)) \otimes_k  HH(mf(R,g)) \ar[d]_-{- \tstar -} \ar[r] & HH^{II}(qmf^0(Q,f)) \otimes_k  HH^{II}(qmf^0(R,g))\ar[d]_-{- \tstar -} \\
HH(mf(Q \otimes_k R, f+g)) \ar[r]  & HH^{II}(qmf^0(Q \otimes_k R, f+g))
}
\end{equation}
and 
\begin{equation}
\label{dia4}
\xymatrix{
HH^{II}(qmf(Q,f)) \otimes_k  HH^{II}(qmf(R,g)) \ar[r]^-{\e^0 \otimes \e^0} \ar[d]_-{- \tstar -} & (\Omega_{Q/k}^\bu, -df)  \otimes_k (\Omega^\bu_{R/k}, -dg) \ar[d]^-{\smsh}  \\
HH^{II}(qmf(Q \otimes_k R, f+g)) \ar[r]^-{\e^0} & (\Omega^\bu_{Q \otimes_k R/k}, -df -dg) 
}
\end{equation}
commute. It follows immediately from Lemma \ref{lem112} that (\ref{dia3}) commutes. The square 
\begin{equation}
\label{dia5}
\xymatrix{
HH^{II}(Q,-f) \otimes_k  HH^{II}(R, -g) \ar[r]^-{ \simeq} \ar[d]^-{- \tstar -}& HH^{II}(qmf(Q,f)) \otimes_k  HH^{II}(qmf(R,g))  \ar[d]_-{- \tstar -}  \\
HH^{II}(Q \otimes_k R, -f-g) \ar[r]^-{\simeq}& HH^{II}(qmf(Q \otimes_k R, f+g))
}
\end{equation}
evidently commutes, and concatenating this diagram with (\ref{dia4}) gives a commutative diagram. It follows that (\ref{dia4}) commutes.
\end{proof}

%%%%%%%%%%%%%%%%%%
%%%%%%%%%%%%%%%%%%
%%%%%%%%%%%%%%%%%%
%%%%%%%%%%%%%%%%%%
%%%%%%%%%%%%%%%%%%
%%%%%%%%%%%%%%%%%%

For an essentially smooth $k$-algebra $Q$, any element $f \in Q$, and any pair of closed subsets $Z$ and $W$ of $\Spec(Q)$,  there is a pairing
\begin{equation} \label{E1116}
HH(mf^Z(Q, f)) \times HH(mf^W(Q, -f)) \xra{\star} 
HH(mf^{Z \cap W}(Q,0))
\end{equation}
defined by composing the K\"unneth map
$$
HH(mf^Z(Q, f)) \times HH(mf^W(Q, -f)) \xra{\tstar} 
HH(mf^{Z \times W}(Q \otimes_k Q, f \otimes 1 - 1 \otimes f)) 
$$
with the map
$$
HH(mf^{Z \times W}(Q \otimes_k Q, f \otimes 1 - 1 \otimes f)) \to 
HH(mf^{Z \cap W}(Q,0))
$$
induced by the multiplication map $Q \otimes Q \to Q$. The previous result, along with the functoriality of the HKR map, yields:

\begin{cor} \label{cor1129}
  The diagram
$$
\xymatrix{
  HH(mf^Z(Q,f)) \otimes_k  HH(mf^W(Q,-f)) \ar[rrr]^-{\e_{Q,f,Z} \otimes \e_{Q, -f, Z}} \ar[d]
  &&& \R\G_Z (\Omega_{Q/k}^\bu, -df) \otimes_k \R\G_W (\Omega_{Q/k}^\bu, df)  \ar[d]^-{\smsh} \\
  HH(mf^{Z \cap W}(Q, 0))  \ar[rrr]^-{\e_{Q, 0, Z \cap W}} &&& \R\Gamma_{Z \cap W} \Omega^{\bu}_{Q/k} \\
}
$$
in $D(Q \otimes_k Q)$ commutes.
\end{cor}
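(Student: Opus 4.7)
My plan is to decompose the diagram into two stacked squares, the top one supplied by Proposition \ref{prop112} and the bottom one expressing naturality of the HKR map under the multiplication $\mu: Q \otimes_k Q \to Q$. Specializing Proposition \ref{prop112} to $R = Q$ and $g = -f$ (so $f + g = 0$) yields the commutative square
$$
\xymatrix{
HH(mf^Z(Q,f)) \otimes_k HH(mf^W(Q,-f)) \ar[rr]^-{\e \otimes \e} \ar[d]_-{\tstar}
& & \R\Gamma_Z (\Omega^\bu_{Q/k},-df) \otimes_k \R\Gamma_W (\Omega^\bu_{Q/k},df) \ar[d]^-{\smsh} \\
HH(mf^{Z \times W}(Q \otimes_k Q, 0)) \ar[rr]^-{\e} & & \R\Gamma_{Z \times W} \Omega^\bu_{Q \otimes_k Q/k}.
}
$$
Since the left vertical map in Corollary \ref{cor1129} is, by definition (see (\ref{E1116})), the composition of $\tstar$ with the functor $mf^{Z \times W}(Q \otimes_k Q, 0) \to mf^{Z \cap W}(Q, 0)$ induced by $\mu$, it suffices to prove that the naturality square
$$
\xymatrix{
HH(mf^{Z \times W}(Q \otimes_k Q, 0)) \ar[rr]^-{\e_{Q \otimes_k Q, 0, Z \times W}} \ar[d]_-{\mu_*} & & \R\Gamma_{Z \times W} \Omega^\bu_{Q \otimes_k Q/k} \ar[d]^-{\mu_*} \\
HH(mf^{Z \cap W}(Q, 0)) \ar[rr]^-{\e_{Q, 0, Z \cap W}} & & \R\Gamma_{Z \cap W} \Omega^\bu_{Q/k}
}
$$
commutes in $D(Q \otimes_k Q)$.

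To verify this naturality square, I would invoke the factorization of $\e$ given by diagram (\ref{bigdiagram}). The inclusion $mf \hookrightarrow qmf$, the cdg-functor $(F, \beta): qmf \to qmf^0$, and the inclusion $(A, 0, -f) \hookrightarrow qmf^0(A, f)$ are all defined objectwise, and hence are obviously compatible with pullback along any $k$-algebra homomorphism. This reduces the problem to the naturality of the classical HKR map $\e^A: HH^{II}(A, 0, 0) \to \Omega^\bu_{A/k}$ sending $a_0[a_1 | \cdots | a_n]$ to $\frac{1}{n!} a_0 \, da_1 \cdots da_n$, which is manifestly natural in the commutative $k$-algebra $A$. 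For the explicit form of $\e^0$ via the supertrace, one chooses connections on objects of $qmf^0(Q, 0)$ to be pullbacks along $\mu$ of the connections chosen on $qmf^0(Q \otimes_k Q, 0)$, so that the supertrace formula is natural on the nose; since the derived-level map is independent of connection choices, no loss of generality results.

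The main point that requires care is the interaction with the support conditions, i.e. the compatibility of the \v{C}ech models of $\R\Gamma_{Z \times W}$ and $\R\Gamma_{Z \cap W}$ under $\mu_*$. If $g_1, \dots, g_m$ cut out $Z$ and $h_1, \dots, h_\ell$ cut out $W$ in $\Spec(Q)$, then the elements $\{g_i \otimes 1, 1 \otimes h_j\}$ cut out $Z \times W$ in $\Spec(Q \otimes_k Q)$, and their images under $\mu$ generate the ideal of $Z \cap W$ in $Q$. The corresponding \v{C}ech complexes are then related by a $\mu$-equivariant augmentation, making the naturality diagram commute once we pass to these concrete models and invoke the classical naturality already established.
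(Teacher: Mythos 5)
Your proof is correct and takes exactly the approach the paper intends: the paper's "proof" is a single sentence invoking Proposition \ref{prop112} together with "the functoriality of the HKR map," and your two-square decomposition via the diagonal $\mu: Q\otimes_k Q \to Q$ makes that remark precise. Your discussion of the base-change functor on $qmf$, $qmf^0$, and $(A,0,-f)$, the reduction to the manifest naturality of the classical HKR formula on $HH^{II}(A,0,-f)$, and the compatibility of the \v{C}ech models of $\R\Gamma_{Z\times W}$ and $\R\Gamma_{Z\cap W}$ under $\mu$ supply the details the authors left implicit, and I see no gap.
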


We will be especially interested in the case where $Z \cap W = \{\fm\}$.

%%%%%%%%%%%%%%%%%%%%
\section{Proof of Shklyarov's conjecture}
\label{traceresidue}
%%%%%%%%%%%%%%%%%%%%

Throughout this section, we assume
\begin{itemize}
\item $k$ is a field,
  \item $Q$ is a regular $k$-algebra, and
\item $\fm$ is a $k$-rational maximal ideal of $Q$; i.e.~the canonical map $k \to Q/\fm$ is an isomorphism.
\end{itemize}

Let us review our progress on the proof of Conjecture \ref{conj:s}.
Recall from the introduction that, to prove the conjecture, it suffices to show that diagram \eqref{outline} commutes, the composition along
the left side of this diagram computes the
pairing $\eta_{mf}$, and the composition along the right side computes the residue pairing.
So far, we have shown the two interior squares of \eqref{outline} commute: this follows from
Lemma \ref{If}, Lemma \ref{lem1129}, and Corollary \ref{cor1129}. In this section, we show the left side of the diagram gives the canonical pairing $\eta_{mf}$ (Lemma \ref{lem1119t}), the right side of the diagram gives the residue pairing (Proposition \ref{prop1129}), and the bottom triangle commutes (Theorem \ref{thm112}).

%%%%%%%%%%%%%%%
\subsection{Computing $HH(mf^\fm(Q,0))$}
\label{calculation}
%%%%%%%%%%%%%%%%
We carry out a calculation of the Hochschild homology of the dg-category $mf^\fm(Q, 0)$ that we will use repeatedly throughout the rest of the paper. Let $n$ denote the Krull dimension of $Q_\fm$. We recall that a sequence $x_1, \dots, x_n \in \fm$ is called a \emph{system of parameters} if $x_1, \dots, x_n$ generate an $\fm$-primary ideal, and a system of parameters is called \emph{regular} if the elements generate $\fm$. 

Fix a regular system of parameters $x_1, \dots, x_n$ for $Q_\fm$, and set $K = \Kos_{Q_\fm}(x_1, \dots, x_n) \in mf^\fm(Q_\fm, 0)$, the $\Z/2$-folded Koszul
complex on the $x_i$'s.  
Explicitly, $K$ is the differential $\Z/2$-graded algebra whose underlying algebra is the exterior algebra over $Q_\fm$ generated by $e_1, \dots, e_n$ with
$d^K(e_i) = x_i$. 
The differential $\Z/2$-graded $Q_\fm$-algebra $\cE:= \End_{mf^\fm(Q_\fm, 0)}(K)$ is generated by odd degree elements $e_1, \dots, e_n$, $e^*_1, \dots, e^*_n$ satisfying
$e_i^2 = 0 = (e_i^*)^2$, $[e_i, e_j] = 0 = [e_i^*, e_j^*]$, 
and $[e_i, e_j^*] = \d_{ij}$; and the differential $d^\cE$ is determined by the equations $d^\cE(e_i) = x_i$ and $d^\cE(e_i^*) =
0$. Let 
$\L$ be the dg-$k$-subalgebra of $\cE$ generated by the $e_i^*$. So, $\L$ is an exterior algebra over $k$ on $n$ generators, with trivial
differential. The inclusion $\L \subseteq \cE$ is a quasi-isomorphism of differential $\Z/2$-graded $k$-algebras. Since $\L$ is graded commutative, $HH_*(\L)$ is a $k$-algebra under the shuffle product, and, by a standard calculation, there is an isomorphism
\begin{equation}
\label{HHexterior}
\Lambda \otimes_k k[y_1, \dots, y_n] \xra{\cong} HH_*(\L),
\end{equation}
of $k$-algebras, where $e_i^*  \otimes 1 \mapsto e_i^*[]$, and $1 \otimes y_i \mapsto 1[e_i^*]$. 
Here, and throughout the paper, we use the notation $\a_0[]$ to denote an element of a Hochschild complex of the form $\a_0[\a_1 | \cdots | \a_n]$ with $n = 0$.

\begin{lem} \label{lem529}
The canonical morphisms 
\begin{equation}
\label{cE}
\cE \into mf^\fm(Q_\fm,0)
\end{equation}
and 
\begin{equation}
\label{loc}
mf^\fm(Q, 0) \to mf^\fm(Q_\fm,0)
\end{equation}
of dg-categories are Morita equivalences. In particular, we have canonical quasi-isomorphisms
\begin{equation}
\label{lambdaqi}
HH(\Lambda) \xra{\simeq} HH(mf^\fm(Q_\fm,0)) \xleftarrow{\simeq} HH(mf^\fm(Q, 0)).
\end{equation}
\end{lem}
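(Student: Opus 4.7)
The plan is to reduce everything to two Morita equivalence claims and to the quasi-isomorphism $\Lambda \into \cE$. Granting those, the quasi-isomorphisms \eqref{lambdaqi} follow at once from Morita invariance of Hochschild homology together with the quasi-isomorphism $HH(\Lambda) \xra{\simeq} HH(\cE)$ induced by $\Lambda \into \cE$. That the inclusion $\Lambda \into \cE$ is itself a quasi-isomorphism of dg-algebras can be checked directly using $d^\cE(e_i)=x_i$ and $d^\cE(e_i^*)=0$: the $e_i$'s form a Koszul resolution of $k = Q_\fm/\fm$ inside $\cE$, and taking cohomology of $\cE$ collapses precisely onto the subalgebra generated by the $e_i^*$.

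To prove \eqref{cE} is a Morita equivalence, I would use the standard criterion that the inclusion of the full dg-subcategory spanned by a single object $K$ into a pretriangulated dg-category is a Morita equivalence if and only if $K$ classically generates the homotopy category as a thick subcategory. Here $\cE \into mf^\fm(Q_\fm,0)$ is such an inclusion, so I need that $K$ classically generates $[mf^\fm(Q_\fm,0)]$. By Proposition \ref{prop116a}(1) and the identification of $[mf(Q_\fm,0)]$ with the $\Z/2$-folded homotopy category of perfect complexes, $[mf^\fm(Q_\fm,0)]$ identifies with the $\Z/2$-folded category of perfect $Q_\fm$-complexes with $\fm$-supported cohomology. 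Since $Q_\fm$ is regular local with $k$-rational maximal ideal, the thick subcategory of $\D^\perf(Q_\fm)$ consisting of complexes with cohomology supported at $\fm$ is classically generated by the residue field $k$, and $K$ is a free resolution of $k$. Folding preserves thick generation (any $\Z/2$-folded perfect complex is obtained from an unfolded one by folding, and thick closures commute with folding), so $K$ generates.

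For \eqref{loc}, I lift the regular system of parameters $x_1,\dots,x_n$ from $Q_\fm$ to elements of $Q$ (possible because $\fm$ is $k$-rational and $Q$ is regular, whence $\fm$ admits a minimal generating set of $n$ elements in $Q$), producing $\widetilde K = \operatorname{Kos}_Q(x_1,\dots,x_n) \in mf^\fm(Q,0)$ with $\widetilde K_\fm = K$. The same generation argument applied over $Q$ shows that $\widetilde K$ classically generates $[mf^\fm(Q,0)]$. The natural morphism $\End_Q(\widetilde K) \to \End_{Q_\fm}(K)$ is a quasi-isomorphism: the source is a bounded complex of finitely generated $Q$-modules with $\fm$-supported cohomology, hence localization at $\fm$ is a quasi-isomorphism. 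Consequently both inclusions $\End_Q(\widetilde K) \into mf^\fm(Q,0)$ and $\cE \into mf^\fm(Q_\fm,0)$ are Morita equivalences by the argument of the previous paragraph; a two-out-of-three argument (with the composition $\End_Q(\widetilde K)\to \cE \into mf^\fm(Q_\fm,0)$) gives that \eqref{loc} is Morita.

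The main obstacle is the generation statement used in both arguments, namely that the Koszul complex on a regular system of parameters classically generates the thick subcategory of perfect complexes with cohomology supported at $\fm$. In the unfolded $\Z$-graded setting this is a standard consequence of the Hopkins--Neeman theorem, but the careful passage to the $\Z/2$-folded matrix factorization category of $(Q_\fm,0)$, together with the verification that finite-length cohomology forces membership in the Koszul thick closure, is the delicate ingredient; this is presumably the point where Iyengar's input enters. Once this generation result is in hand, everything else is formal.
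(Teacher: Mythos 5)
Your proposal has two genuine gaps, one in each Morita-equivalence claim, and the paper's proof avoids both by arguing differently.

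For \eqref{cE}, you reduce generation to Hopkins--Neeman for the $\Z$-graded category $D^{\perf}_\fm(Q_\fm)$, and then assert that ``folding preserves thick generation'' because ``any $\Z/2$-folded perfect complex is obtained from an unfolded one by folding.'' That last parenthetical is exactly the delicate point, and it is not addressed: an arbitrary object $(P,\delta)$ of $mf^\fm(Q_\fm,0)$ is a $\Z/2$-graded free module with an odd square-zero endomorphism and $\fm$-torsion homology, and it is not obvious that it lies in the thick closure of objects that arise by folding bounded $\Z$-graded complexes. The paper instead cites \cite[Proposition 3.4]{BMTW} to identify $[mf^\fm(Q_\fm,0)]$ with the derived category $\cD$ of $\Z/2$-complexes of finitely generated $Q_\fm$-modules with finite-dimensional homology, and then runs a direct, elementary generation argument entirely within the $\Z/2$-graded world: for minimal $X$ one has $K\otimes X \simeq k\otimes X \in \Thick(k)=\Thick(K)$, so it suffices to show $X\in\Thick(K\otimes X)$; this reduces by iteration to showing $Y \in \Thick(Y/xY)$ for $x\in\fm\setminus\{0\}$, which follows from $Y/x^nY\in\Thick(Y/xY)$ together with $\End_\cD(Y)[1/x]=0$ and a splitting of the triangle $Y\xra{x^n}Y\to Y/x^nY$. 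This sidesteps any unfolding issue.

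For \eqref{loc}, your construction of $\widetilde K$ is flawed. First, the claim that $\fm$ admits a minimal generating set of $n=\dim(Q_\fm)$ elements in $Q$ is false in general (e.g.\ $Q=k[x,y]/(y^2-x^3-x)$, $\fm=(x,y)$: here $n=1$ but $\fm$ needs two generators). Second, even if you just lift a regular system of parameters arbitrarily to $Q$, the resulting Koszul complex is supported on $V(x_1,\dots,x_n)$, which may strictly contain $\{\fm\}$, so $\widetilde K$ need not lie in $mf^\fm(Q,0)$ at all. Consequently the endomorphism dga $\End_Q(\widetilde K)$ that your two-out-of-three argument requires is not available. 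The paper handles \eqref{loc} differently: it observes that $[mf^\fm(Q,0)]\to[mf^\fm(Q_\fm,0)]$ is fully faithful (mapping complexes are $\fm$-supported, so they do not change under localization), hence the induced map on idempotent completions is fully faithful; essential surjectivity is then checked by producing a preimage of $K$, namely the $\Z/2$-folding of a finite $Q$-free resolution $F$ of $k=Q/\fm$, which is automatically supported at $\{\fm\}$ and localizes to the Koszul complex. Replacing your $\widetilde K$ by this $F$ is the fix you would need.
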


\begin{proof} 
To prove (\ref{cE}) is a Morita equivalence, we prove the thick closure of $K$ in the homotopy category $[mf^\fm(Q_\fm,0)]$ is all of
  $[mf^\fm(Q_\fm,0)]$. Let $\cD$ denote
  the derived category of all $\Z/2$-complexes of finitely generated $Q_\fm$-modules whose homology groups are finite dimensional over $k$. Since $Q_\fm$ is regular, it follows from \cite[Proposition 3.4]{BMTW} that the canonical functor 
  $$
  [mf^\fm(Q_\fm, 0)] \to \cD
  $$
  is an equivalence. It therefore suffices to show $\Thick(K) = \cD$; in fact, we need only show every object in $\cD$ with free components is in $\Thick(K)$.

Let $X$ be an object of $\cD$ with free components. We may assume that $X$ is \emph{minimal}, i.e. that $k \otimes_{Q_\fm} X$ is a direct sum of copies of $k$ and $\Sigma k$. The isomorphism $K \xra{\cong} k$ in $\cD$ induces an isomorphism 
$$
K \otimes_{Q_\fm} X \xra{\cong} k \otimes_{Q_\fm} X,
$$
and therefore $K \otimes_{Q_\fm} X \in \Thick(k)$. It thus suffices to prove $X \in \Thick(K \otimes_{Q_\fm} X)$. Since
  $$
  K \otimes_{Q_\fm} X \cong  \Kos_{Q_\fm}(x_1) \otimes_{Q_\fm} \cdots \otimes_{Q_\fm} \Kos_{Q_\fm}(x_n)  \otimes_{Q_\fm} X,
  $$
it suffices to show that, for every $Y \in \cD$ whose components are free $Q_\fm$-modules, and every $x \in  \fm \setminus \{0\}$, $Y \in \Thick(Y/xY)$.
  Using induction and the exact sequence
  $$
  0 \to Y/x^{n-1}Y \xra{x} Y/x^nY \to Y/xY \to 0,
  $$ 
  we get $Y/x^n Y \in \Thick(Y/xY)$ for all $n$.
Observing that $\End_{\cD}(Y)[1/x] = 0$, choose $n \gg 0$ such that 
multiplication by $x^n$ on $Y$ determines the zero map in $\cD$. The distinguished triangle
$$
Y \xra{x^n} Y \to Y/x^n \to \Sigma Y
$$
in $\cD$ therefore splits, implying that $Y$ is a summand of $Y/x^n$. Thus, $ Y \in \Thick(Y/x^n) \subseteq \Thick(Y/xY)$.

As for (\ref{loc}), the functor $[mf^\fm(Q, 0)] \to [mf^\fm(Q_\fm, 0)]$ is fully faithful, since 
$\Hom_{[mf^\fm(Q,0)]}(X, Y)$ is supported in $\{\fm\}$ for any $X, Y$. It follows that the induced map
\begin{equation}
\label{idem}
[mf^\fm(Q, 0)]^{\on{idem}} \to [mf^\fm(Q_\fm, 0)]^{\on{idem}}
\end{equation}
on idempotent completions is fully faithful, so we need only show (\ref{idem}) is essentially surjective. By the above argument, it suffices to show $K$ is in the essential image of (\ref{idem}). Choose a $Q$-free resolution $F$ of $k$; $F_\fm$ is homotopy equivalent to the Koszul complex on the $x_i$'s, and so the $\Z/2$-folding of $F_\fm$ is isomorphic to $K$ in $[mf^\fm(Q_\fm, 0)]$.
\end{proof}

\begin{rem}
\label{moritaremark}
Let $\widehat{Q}$ denote the $\fm$-adic completion of $Q$. Letting $\widehat{Q}$ play the role of $Q$ in Lemma \ref{lem529} implies that the inclusion
$$
\on{End}_{mf^\fm(\widehat{Q}, 0)}(K \otimes_{Q_\fm} \widehat{Q}) \into mf^\fm(\widehat{Q}, 0)
$$
is a Morita equivalence. The same proof that shows the map (\ref{loc}) in Lemma \ref{lem529} is a Morita equivalence shows the canonical map
$$
mf^\fm(Q, 0) \to mf^\fm(\widehat{Q}, 0)
$$
is a Morita equivalence.
\end{rem}

%%%%%%%%%%%%%%
\subsection{The trace map}
%%%%%%%%%%%%%%
We define an even degree map
$$
\trace: HH_*(mf^\fm(Q,0)) \to k
$$
of $\Z/2$-graded $k$-vector spaces, with $k$ concentrated in even degree, as follows. Let $\Perf_{\Z/2}(k)$ denote the dg-category of
$\Z/2$-graded complexes of (not necessarily finitely dimensional) 
$k$-vector spaces having finite dimensional homology. There is a dg-functor  $mf^{\fm}(Q,0) \to \Perf_{\Z/2}(k)$ 
induced by restriction of scalars along the structural map $ k \to Q$ that induces a map 
$$
u: HH_*(mf^\fm(Q,0)) \to HH_*(\Perf_{\Z/2}(k)),
$$
and there is a canonical isomorphism
$$
v : k \xra{\cong} HH_*(\Perf_{\Z/2}(k)) 
$$
given by $a \mapsto a[]$. Here, $k$ is considered as a $\Z/2$-graded complex concentrated in even degree, and, on the right, $a$ is regarded as an endomorphism of this complex. We define
$$
\trace:= v^{-1} u.
$$
In the rest of this subsection, we establish several technical properties of the trace map that we will need later on.

Given an object  $(P, \d_P) \in mf^\fm(Q,0)$, there is a canonical map of complexes $\End(P) \to HH(mf^{\fm}(Q,0))$ given by $\a \mapsto \a[]$
and hence an induced map
\begin{equation}
\label{inducedmap}
H_*(\End(P))\to   HH_*(mf^\fm(Q,0)).
\end{equation}

\begin{prop}
\label{(1)}
If $(P, \d_P) \in mf^\fm(Q,0)$, and $\a$ is an even degree endomorphism of $P$, the composition
$$
H_0(\End(P)) \xra{(\ref{inducedmap})} HH_0(mf^\fm(Q, 0)) \xra{\trace} k
$$
sends $\a$ to the supertrace of the endomorphism of $H_*(P)$ induced by $\a$:
\begin{align*}
\trace(\a[]) & = \on{str}(H_*(\a) : H_*(P) \to H_*(P)) \\
& = \tr(H_0(\a) : H_0(P) \to H_0(P)) - \tr(H_1(\a) : H_1(P) \to H_1(P)).
\end{align*}
In particular,
$$
\trace(\id_P[]) = \dim_k H_0(P) - \dim_k H_1(P).
$$
\end{prop}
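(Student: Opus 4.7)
Unwinding the definition $\trace = v^{-1} \circ u$, one has $\trace(\a[\,]) = v^{-1}([\a])$, where $[\a] \in HH_0(\Perf_{\Z/2}(k))$ is the class of $\a$ regarded as an endomorphism of $(P,\d_P)$ viewed in $\Perf_{\Z/2}(k)$ via restriction of scalars along $k \to Q$. The object $(P,\d_P)$ really does land in $\Perf_{\Z/2}(k)$: since $P$ is supported on $\{\fm\}$ and $\fm$ is $k$-rational, $H_*(P)$ is finite-dimensional over $k$. The task is thus to compute the Morita isomorphism $v^{-1}$ on the class $[\a]$.

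Because $k$ is a field, $(P,\d_P)$ is quasi-isomorphic in $\Perf_{\Z/2}(k)$ to its homology $V := H_*(P) = H_0(P) \oplus \Sigma H_1(P)$ endowed with the zero differential, and under this quasi-isomorphism $\a$ passes to $H_*(\a)$. The claim therefore reduces to showing that, for any finite-dimensional $\Z/2$-graded $k$-vector space $V$ (with zero differential) and any even endomorphism $\b$ of $V$, $v^{-1}([\b]) = \str(\b)$. Writing $V \cong k^{n_0} \oplus (\Sigma k)^{n_1}$, additivity of $HH_0$ on finite direct sums further reduces matters to the one-dimensional cases $V = k$ and $V = \Sigma k$. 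The first case, $v^{-1}([c \cdot \id_k]) = c$, is immediate from the definition $v(c) = c[\,]$.

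The remaining task is to show $v^{-1}([c \cdot \id_{\Sigma k}]) = -c$, or equivalently $[\id_{\Sigma k}] = -[\id_k]$ in $HH_0(\Perf_{\Z/2}(k))$. This sign is the main obstacle. The plan is to invoke additivity of the universal Euler class $[\id_{(-)}]$ on distinguished triangles in $[\Perf_{\Z/2}(k)]$: rotating the trivially distinguished triangle $k \xra{\id} k \to 0 \to \Sigma k$ yields the distinguished triangle $k \to 0 \to \Sigma k \xra{-\id} \Sigma k$, and additivity then gives $[\id_0] = [\id_k] + [\id_{\Sigma k}]$, which, combined with $[\id_0] = 0$, produces the desired identity. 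Additivity on triangles can be verified directly by realizing each triangle as a one-sided twisted extension of $\Z/2$-graded complexes and invoking additivity of $HH_0$ with respect to the underlying direct sum of graded modules. The last formula $\trace(\id_P[\,]) = \dim_k H_0(P) - \dim_k H_1(P)$ is then immediate on taking $\a = \id_P$ and computing $\str(\id_V) = \dim_k V_0 - \dim_k V_1$.
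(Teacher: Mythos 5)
Your approach reaches the same diagram as the paper's first step — the reduction from a general $(P,\d_P)$ to its homology $V = H_*(P)$ with zero differential, using that over a field every $\Z/2$-complex splits into its homology plus a contractible piece — but you state "under this quasi-isomorphism $\a$ passes to $H_*(\a)$" without verifying that the resulting Hochschild classes coincide. That verification is precisely the paper's $\iota$, $\pi$, $h$ chain-homotopy argument establishing the commutativity of the square \eqref{homologydia}, so this part of your outline is really just an unexpanded version of the paper's argument.

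Where you genuinely diverge is afterward. The paper, having reduced to $\on{Vect}_{\Z/2}(k)$, cites Segal's generalized trace map (\cite[Section 2.3.1, Lemma 2.12, p.~872]{segal}), which sends $\a_0[\,]$ to $\str(\a_0)[\,]$ and thus produces the supertrace and its sign in one stroke. You instead reduce to the two one-dimensional cases $k$ and $\Sigma k$ and propose to extract the sign $[\id_{\Sigma k}] = -[\id_k]$ from additivity of the universal Euler class $\ch(X) = [\id_X]$ on distinguished triangles. That fact is true, and the route is legitimate in principle; if you are willing to cite it (it is the well-definedness of the dg Chern character on $K_0$, proved for instance in Shklyarov's Hirzebruch--Riemann--Roch paper), your argument closes.

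The concrete gap is in how you propose to verify the additivity. You write that it "can be verified directly by realizing each triangle as a one-sided twisted extension of $\Z/2$-graded complexes and invoking additivity of $HH_0$ with respect to the underlying direct sum of graded modules." This is not a proof: the assertion that $\ch$ of a twisted extension (an object whose underlying graded module is $A \oplus C$ but whose differential has an off-diagonal component) equals $\ch(A) + \ch(C)$ is exactly the content of triangle additivity, restated. "Additivity of $HH_0$ with respect to the underlying direct sum of graded modules" is not a simpler known lemma one can invoke; the off-diagonal differential obstructs the naive chain-level computation (the obvious primitive $\iota_A[\pi_A] + \iota_B[\pi_B]$ has extra boundary terms involving the connecting map that must themselves be killed by further higher-length corrections). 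So either cite the additivity of the Chern character on triangles as an external result, or give the genuine chain-level argument — but as stated, the justification is circular. Note that this sign is precisely the nontrivial content of the proposition, and the paper avoids the issue by outsourcing the sign to Segal's supertrace computation.
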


\begin{proof} 
Let $\on{Vect_{\Z/2}}(k)$ denote the subcategory of $\Perf_{\Z/2}(k)$ spanned by finite-dimensional $\Z/2$-graded vector spaces with trivial differential. It is well-known that the inclusion $\on{Vect_{\Z/2}}(k) \into \Perf_{\Z/2}(k)$ induces a quasi-isomorphism on Hochschild homology. Composing the map $\End(H_*(P)) \to HH_*(\on{Vect}_{\Z/2}(k))$ given by $\a \mapsto \a[]$ with the canonical map $H_*(\End(P)) \to \End(H_*(P))$ gives a map
\begin{equation}
\label{vs}
H_*(\End(P)) \to HH_*(\on{Vect}_{\Z/2}(k)).
\end{equation}

We first show that the square
\begin{equation}
\label{homologydia}
\xymatrix{
H_*(\End(P)) \ar[d]^-{(\ref{vs})} \ar[r]^-{(\ref{inducedmap})} & HH_*(mf^\fm(Q, 0)) \ar[d]^-{u} \\
HH_*(\on{Vect_{\Z/2}}(k)) \ar[r]^-{\cong}& HH_*(\Perf_{\Z/2}(k)) \\
}
\end{equation}
commutes. Let $\b$ be an even degree cycle in $\End(P)$, and let $H_*(\b)$ denote the induced endomorphism of $H_*(P)$. We must show the cycles $\b[]$ and $H_*(\b)[]$ coincide in $HH_*(\Perf_{\Z/2}(k))$. To see this, choose even degree $k$-linear chain maps
$$
\iota : H_*(P) \to P \text{, } \pi : P \to H_*(P)
$$
such that
\begin{itemize}
\item  $\pi \circ \iota = \id_{H_*(P)}$, and
\item $\iota \circ \pi$ is homotopic to $\id_P$ via a ($\Z/2$-graded) homotopy $h$, i.e.
$$
\iota \circ \pi - \id_P = \d_P \circ h + h \circ \d_P.
$$
\end{itemize}
Applying the Hochschild differential $b$ to
$$
\pi [\b \circ \iota] \in \Hom(P, H_*(P)) \otimes \Hom(H_*(P), P) \subseteq HH(\Perf_{\Z/2}(k)),
$$ 
we get
$$
b( \pi [\b \circ \iota]) = (b_2 + b_1)(\pi [\b \circ \iota]) = b_2(\pi [\b \circ \iota]) = (\pi \circ \b \circ \iota)[] - (\iota \circ \pi \circ \beta)[] = H_*(\b)[] - (\iota \circ \pi \circ \beta)[].
$$
Next, observe that 
$$
(b_2 + b_1)(( h \circ \b)[]) = b_1((h \circ \b)[]) = (\iota \circ \pi \circ \b - \b) [].
$$
It follows that diagram (\ref{homologydia}) commutes.

The isomorphism
$$
v: k \xra{\cong} HH_*(\Perf_{\Z/2}(k))
$$
factors as 
$$
k \xra{\cong} HH_*(k) \xra{\cong}  HH_*(\on{Vect_{\Z/2}}(k)) \xra{\cong}  HH_*(\Perf_{\Z/2}(k)),
$$
where each map is the evident canonical one. There is a chain map $HH( \on{Vect}_{\Z/2}(k)) \to HH(k)$ given by the
generalized trace map described in \cite[Section 2.3.1]{segal} and an evident isomorphism $HH_*(k) \xra{\cong} k$. It follows from \cite[Lemma 2.12]{segal} that
composing these maps gives the inverse of 
$$
k \xra{\cong} HH_*(k) \xra{\cong} HH_*(\on{Vect_{\Z/2}}(k)).
$$
As discussed in \cite[Page 872]{segal}, the generalized trace sends a class of the form $\a_0[]$ to $\str(\a_0)[]$. The statement now follows from the commutativity of (\ref{homologydia}). 
\end{proof}

\begin{rem}
\label{thetaremark}
If $Z$ and $W$ are closed subsets of $\Sing(Q/f)$ that satisfy $Z \cap W = \{\fm\}$, then, 
from \eqref{E1116}, we obtain the pairing
$$
HH_*(mf^Z(Q,f)) \times HH_*(mf^W(Q,-f)) \xra{\star} HH_*(mf^{\fm}(Q,0)).
$$
By Proposition \ref{(1)}, given $X \in mf^Z(Q,f)$ and $Y \in mf^W(Q,-f)$, the composition
\begin{align*}
H_*(\End(X)) \times H_*(\End(Y)) & \to HH_*(mf^Z(Q,f)) \times HH_*(mf^W(Q,-f)) \\
&  \xra{\star} HH_*(mf^{\fm}(Q,0)) \\
& \xra{\trace} k
\end{align*}
  sends a pair of endomorphisms $(\a, \b)$ to $\tr(H_0(\a \otimes \b)) - \tr(H_1(\a \otimes \b))$. In particular, it sends $(\id_X, \id_Y)$ to 
$$
\theta(X,  Y) := \dm_k H_0(X \otimes Y) - \dm_k H_1(X \otimes Y).
$$
\end{rem}

Recall from Subsection \ref{calculation}
the folded Koszul complex $K$ and the exterior algebra $\Lambda \subseteq \End_{mf^\fm(Q_\fm, 0)}(K)$.
Denote by $\eta: \Lambda \to k$ the augmentation map that sends $e_i^*$ to $0$.

\begin{prop} \label{prop1116}
The composition
\begin{equation}
\label{lamtrace}
HH_*(\Lambda)  \xra{(\ref{lambdaqi})} HH_*(mf^\fm(Q_\fm,0)) \xra{\trace} k
\end{equation}
coincides with
\begin{equation}
\label{augmentation}
HH_*(\Lambda) \xra{HH_*(\eta)} HH_*(k) \xra{\cong} k,
\end{equation}
where the second map in (\ref{augmentation}) is the canonical isomorphism.
In particular, if $\a_0[\a_1 | \dots | \a_n]$ is a cycle in $HH(\Lambda)$, where $n > 0$, the map (\ref{lamtrace}) sends $\a_0[\a_1 | \dots | \a_n]$ to 0.
\end{prop}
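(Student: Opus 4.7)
The plan is to identify the composition in (\ref{lamtrace}) with $HH_*(\eta)$ followed by the canonical isomorphism $HH_*(k) \xra{\cong} k$ by exhibiting two dg-functors $\Lambda \rightrightarrows \Perf_{\Z/2}(k)$ connected by a strict natural quasi-isomorphism. Let $F : \Lambda \to \Perf_{\Z/2}(k)$ be the dg-functor sending the unique object to $K$ (regarded as a $\Z/2$-graded $k$-vector space by restriction of scalars along $k \to Q_\fm$), acting by the natural contraction action via the inclusion $\Lambda \subseteq \cE$; and let $G : \Lambda \to \Perf_{\Z/2}(k)$ be the composition $\Lambda \xra{\eta} k \xra{\iota} \Perf_{\Z/2}(k)$, where $\iota$ is the tautological inclusion sending the unique object to $k$ (in even degree). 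I claim that the $k$-linear map $j : k \to K$ sending $1 \mapsto 1 \in K$ is a strict natural transformation $G \Rightarrow F$: it is an even cocycle in $\Hom_{\Perf_{\Z/2}(k)}(k, K)$, and the naturality identity $F(\lambda) \circ j = j \circ G(\lambda)$ reduces to $\lambda \cdot 1 = \eta(\lambda) \cdot 1$ in $K$, which holds because the generators $e_i^*$ of $\Lambda$ annihilate $1 \in K$. Since $H_*(K) = k$ with generator represented by $1$, the map $j$ is moreover a quasi-isomorphism.

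By the standard Morita invariance of Hochschild homology (any natural transformation of dg-functors that is object-wise a quasi-isomorphism induces the same map on $HH_*$), we conclude $HH_*(F) = HH_*(G)$ as maps $HH_*(\Lambda) \to HH_*(\Perf_{\Z/2}(k))$. Composing with $v^{-1}$, and observing that $v$ is by definition the composition $k \xra{\cong} HH_0(k) \xra{HH_*(\iota)} HH_*(\Perf_{\Z/2}(k))$, so that $v^{-1} \circ HH_*(\iota)$ is the canonical isomorphism $HH_*(k) \xra{\cong} k$, we obtain
\[
v^{-1} \circ HH_*(F) = v^{-1} \circ HH_*(G) = v^{-1} \circ HH_*(\iota) \circ HH_*(\eta),
\]
which coincides with the composition (\ref{augmentation}).

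It remains to identify the composition in (\ref{lamtrace}) with $v^{-1} \circ HH_*(F)$. The map (\ref{lambdaqi}) is induced by the sequence of dg-functors $\Lambda \subseteq \cE \subseteq mf^\fm(Q_\fm, 0)$, while $\trace$ is originally defined on $HH_*(mf^\fm(Q, 0))$. To reconcile this, I would note that for $X \in mf^\fm(Q, 0)$ the localization map $X \to X_\fm$ is a quasi-isomorphism of $\Z/2$-graded $k$-complexes (since $H_*(X)$ is $\fm$-torsion), so the two dg-functors $mf^\fm(Q, 0) \to \Perf_{\Z/2}(k)$ given by direct restriction of scalars and by first localizing at $\fm$ and then restricting induce the same map on $HH_*$. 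Combined with the Morita equivalence (\ref{loc}), this shows that the composition (\ref{lamtrace}) equals $v^{-1} \circ HH_*(F)$, finishing the proof of the first assertion.

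The ``In particular'' statement then follows immediately: for $n > 0$, $HH_*(\eta)$ sends $\a_0[\a_1|\cdots|\a_n]$ to the bar-degree $n$ chain $\eta(\a_0)[\eta(\a_1)|\cdots|\eta(\a_n)]$ in $HH(k)$, which is zero under the canonical quasi-isomorphism $HH_*(k) \xra{\cong} k$, as the latter extracts the bar-degree zero component. I expect the main obstacle to be a fully self-contained justification of the Morita invariance step; while standard, some care is needed with the $\Z/2$-grading and sign conventions. If desired, one can alternatively produce an explicit chain homotopy between $HH(F)$ and $HH(G)$ from $j$ via the usual mapping cylinder construction.
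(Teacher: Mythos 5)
Your proof is correct, but it takes a genuinely different route from the paper. You identify the composition $HH_*(\Lambda) \to HH_*(mf^\fm(Q_\fm,0)) \xra{\trace} k$ with $v^{-1}\circ HH_*(F)$ for an explicit dg-functor $F : \Lambda \to \Perf_{\Z/2}(k)$, exhibit a strict natural transformation $j : G \Rightarrow F$ from the ``augmentation'' functor $G = \iota\circ\eta$ that is objectwise a quasi-isomorphism, and then invoke invariance of $HH_*$ under natural quasi-isomorphism of dg-functors. Your naturality check is correct (since $e_i^*\cdot 1 = 0 = \eta(e_i^*)$, and naturality is multiplicative it suffices to check on generators), and $j$ is a quasi-isomorphism because $K$ is the Koszul complex of a regular sequence. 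The paper's proof is different: it passes to the $\Z$-graded setting, observes that $HH_0^\Z(\widetilde\Lambda)$ is one-dimensional so the claim need only be checked on $\id_K[]$ (where Proposition~\ref{(1)} applies), and then recovers the $\Z/2$-graded statement by folding.

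The tradeoff: your approach is more conceptual and avoids any explicit degree bookkeeping, but it rests on a Morita-type invariance lemma (two dg-functors connected by a strict natural transformation that is objectwise a homotopy equivalence induce homotopic maps on Hochschild complexes) that, while true and well established in the literature on dg-categories via the bimodule formulation of $HH$, is neither stated nor cited in the paper and is not a one-line verification. You use it twice: once for $j : G \Rightarrow F$ and once more to compare the two restriction-of-scalars functors $X \mapsto X|_k$ and $X\mapsto (X_\fm)|_k$ on $mf^\fm(Q,0)$. You flag this honestly yourself. The paper's $\Z$-grading argument sidesteps this lemma entirely at the cost of introducing the $\Z$-graded auxiliaries $\widetilde K$, $\widetilde\Lambda$ and the observation \eqref{foldhom}, making it more self-contained relative to what the paper has already set up. Both arguments are valid; if you wanted to pursue yours, the cleanest route to the invariance lemma is to observe that $HH_*$ of a dg-functor $F : \cA\to\cB$ depends only on the $\cA$-$\cB$-bimodule $\Hom_\cB(\,?\,, F?)$, and that a natural objectwise quasi-isomorphism yields a quasi-isomorphism of these bimodules.
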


\begin{proof} 
If $C$ is a $\Z$-graded complex, denote its $\Z/2$-folding by $\Fold(C)$. Similarly, given a differential $\Z$-graded category $\cC$, define a differential $\Z/2$-graded category $\on{Fold}(\cC)$ with the same objects as $\cC$ and morphism complexes given by taking the $\Z/2$-foldings of the morphism complexes of $\cC$. In this proof, we use the notation $HH^\Z( - )$ (resp. $HH^{\Z/2}( - )$) to denote the Hochschild complex of a differential $\Z$-graded (resp. $\Z/2$-graded) category. We observe that, if $\cC$ is a differential $\Z$-graded category,
\begin{equation}
\label{foldhom}
\Fold(HH_*^\Z(\cC)) = HH_*^{\Z/2}(\Fold(\cC)).
\end{equation}

Let $\Perf^{\fm}(Q)$ denote the dg-category of perfect complexes of $Q$-modules with support in $\{\fm\}$, and let $\Perf_\Z(k)$ denote the differential $\Z$-graded category of complexes of (not necessarily finite dimensional) $k$-vector spaces with finite dimensional total homology. As in the $\Z/2$-graded case, there is an isomorphism
$$
\widetilde{v} : k \xra{\cong} HH_*^\Z(\Perf_\Z(k)),
$$
where $k$ is concentrated in degree 0, given by $a \mapsto a[]$.

Let $\widetilde{K}$ denote the $\Z$-graded Koszul complex on the regular system of parameters $x_1, \dots, x_n$ for $Q_\fm$ chosen in Subsection \ref{calculation}, so that the $\Z/2$-folding of $\widetilde{K}$ is $K$. Similarly, denote by $\widetilde{\Lambda}$ the subalgebra (with trivial differential) of $\End(\widetilde{K})$, defined in the same way as $\Lambda$, so that the $\Z/2$-folding of $\widetilde{\Lambda}$ is $\Lambda$. Notice that every $\a_i$ appearing in our cycle $\a_0[\a_1 | \dots | \a_n]$ can be considered as an element of $\widetilde{\Lambda}$.

We consider the composition
\begin{equation}
\label{Zgraded}
HH_*^\Z(\widetilde{\Lambda}) \to HH_*^\Z(\End(\widetilde{K})) \to HH_*^\Z(\Perf^\fm(Q)) \to HH_*^\Z(\Perf_\Z(k)) \xra{(\widetilde{v})^{-1}} k
\end{equation}
of maps of $\Z$-graded $k$-vector spaces. We claim (\ref{Zgraded}) coincides with the composition
\begin{equation}
\label{Zgradedaug}
HH^\Z_*(\widetilde{\Lambda}) \to HH^\Z_*(k) \xra{\cong} k,
\end{equation}
where the first map is induced by the augmentation map $\widetilde{\Lambda} \to k$. We need only check this in degree 0. $HH^\Z_0(\widetilde{\Lambda})$ is a 1-dimensional $k$-vector space generated by $\id_K[]$. The map (\ref{Zgradedaug}) sends $\id_K[]$ to 1, and, by (the $\Z$-graded version of) Lemma \ref{(1)}, the map (\ref{Zgraded}) does as well. 

Applying $\Fold( - )$ to (\ref{Zgraded}), and using (\ref{foldhom}),
we arrive at a composition
$$
HH_*^{\Z/2}(\Lambda) \to HH_*^{\Z/2}(\Fold(\Perf^\fm(Q))) \to k
$$
of maps of $\Z/2$-graded complexes of $k$-vector spaces, which may be augmented to a commutative diagram
\begin{equation}
\label{splittri}
\xymatrix{
HH_*^{\Z/2}(\Lambda)  \ar[r] \ar[dr]_-{(\ref{lambdaqi})} & HH_*^{\Z/2}(\Fold(\Perf^\fm(Q))) \ar[d] \ar[r]  & k  \\
& HH_*^{\Z/2}(mf^\fm(Q, 0)). \ar[ru]_-{\trace} &
}
\end{equation}
On the other hand, applying $\Fold( - )$ to (\ref{Zgradedaug}), and once again applying (\ref{foldhom}), we get the map (\ref{augmentation}). 
\end{proof}

\begin{lem} \label{lem526a}
  Suppose $Q$ and $Q'$ are regular $k$-algebras, and $\fm \subseteq Q$, $\fm' \subseteq Q'$ are $k$-rational maximal ideals. Let 
 $g: Q \to Q'$ be a $k$-algebra map such that $g^{-1}(\fm') = \fm$, the induced map $Q_{\fm} \to Q'_{\fm'}$ is flat, and $g(\fm) Q'_{\fm'} = \fm' Q'_{\fm'}$.
  Then  $g$ induces a quasi-isomorphism
  $$
  g_*: HH(mf^{\fm}(Q_\fm,0)) \xra{\simeq} HH(mf^{\fm'}(Q'_{\fm'},0)),
  $$
  and
  $$
  \trace_{Q'_{\fm'}} \circ g_* = \trace_{Q_\fm}.
  $$
  \end{lem}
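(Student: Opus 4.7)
The plan is to reduce the claim, via Lemma \ref{lem529} and Proposition \ref{prop1116}, to the elementary observation that $g$ carries a regular system of parameters for $Q_\fm$ to one for $Q'_{\fm'}$ and therefore induces the identity on the relevant exterior $k$-subalgebra. First I would verify that $\dim Q'_{\fm'} = n$: since $Q_\fm \to Q'_{\fm'}$ is flat and $\fm Q'_{\fm'} = \fm' Q'_{\fm'}$, the closed fiber $Q'_{\fm'}/\fm Q'_{\fm'}$ has Krull dimension $0$, and the dimension formula for flat local maps gives $\dim Q'_{\fm'} = \dim Q_\fm + 0 = n$. Consequently, if $x_1,\dots,x_n$ is a regular system of parameters for $Q_\fm$, then $g(x_1),\dots,g(x_n)$ generates the maximal ideal of the regular local ring $Q'_{\fm'}$ of dimension $n$, hence is a regular system of parameters for $Q'_{\fm'}$.

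Next, set $K = \Kos_{Q_\fm}(x_1,\dots,x_n)$ and $K' = \Kos_{Q'_{\fm'}}(g(x_1),\dots,g(x_n))$, and let $\cE := \End(K)$, $\cE' := \End(K')$, together with their common exterior $k$-subalgebra $\Lambda$ of odd generators $e_1^*,\dots,e_n^*$ as in Subsection \ref{calculation}. The base-change dg-functor
$$
g_* : mf^\fm(Q_\fm,0) \to mf^{\fm'}(Q'_{\fm'},0),\qquad P \mapsto P \otimes_{Q_\fm} Q'_{\fm'},
$$
sends $K$ to $K'$ (identifying $e_i \otimes 1$ with $e_i$) and restricts to the identity on $\Lambda$, since the $e_i^*$ are $k$-linear cycles in both $\cE$ and $\cE'$. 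Therefore the square
$$
\xymatrix{
HH(\Lambda) \ar@{=}[r] \ar[d]^-{\simeq} & HH(\Lambda) \ar[d]^-{\simeq} \\
HH(mf^{\fm}(Q_\fm,0)) \ar[r]^-{g_*} & HH(mf^{\fm'}(Q'_{\fm'},0)) \\
}
$$
commutes, with the vertical arrows quasi-isomorphisms by Lemma \ref{lem529}. This forces $g_*$ to be a quasi-isomorphism.

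For the trace compatibility, Proposition \ref{prop1116} identifies each of the two trace maps, via the corresponding vertical quasi-isomorphism, with the single composite $HH(\Lambda) \xra{HH(\eta)} HH(k) \cong k$ determined by the augmentation $\eta\colon\Lambda \to k$, $e_i^* \mapsto 0$. Since this composite is intrinsic to $\Lambda$ and the square above commutes, the identity $\trace_{Q'_{\fm'}} \circ g_* = \trace_{Q_\fm}$ follows immediately.

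The main obstacle is the first step: one must use both hypotheses on $g$ (flatness and $g(\fm)Q'_{\fm'} = \fm' Q'_{\fm'}$) to guarantee that $g$ preserves regular systems of parameters, and hence that the exterior subalgebras on both sides can be identified canonically. Once this is in place, everything else is a formal consequence of the Morita reduction in Lemma \ref{lem529} together with the trace computation in Proposition \ref{prop1116}.
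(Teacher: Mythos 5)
Your proof is correct. The trace-compatibility part is essentially identical to the paper's argument: both reduce via Lemma~\ref{lem529} to the exterior algebra $\Lambda$, identify $\trace$ with the augmentation $\Lambda \to k$ via Proposition~\ref{prop1116}, and observe that $g$ carries a regular system of parameters to a regular system of parameters (the paper merely asserts this, while you supply the dimension-formula justification).

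The one place you genuinely diverge is in proving that $g_*$ is a quasi-isomorphism. The paper establishes this by passing to the $\fm$-adic completions: the hypotheses give $\widehat{Q} \cong \widehat{Q'}$, and Remark~\ref{moritaremark} shows the maps $HH_*(mf^\fm(Q_\fm,0)) \to HH_*(mf^\fm(\widehat{Q},0))$ and its primed analogue are isomorphisms, so $g_*$ is sandwiched by isomorphisms. You instead deduce it directly from the commuting $\Lambda$-square whose vertical maps are quasi-isomorphisms by Lemma~\ref{lem529}. Both routes work. Yours has the advantage of being more unified, since the same square then immediately delivers the trace identity; the paper's completion argument is more canonical in that it avoids choosing a regular system of parameters for the first assertion, but it introduces the completion only to discard it. One tiny point worth flagging: you should justify that the base-change functor $P \mapsto P \otimes_{Q_\fm} Q'_{\fm'}$ really lands in $mf^{\fm'}(Q'_{\fm'},0)$, i.e.\ that it preserves the support condition. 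This follows from flatness (so $H_*(P \otimes_{Q_\fm} Q'_{\fm'}) \cong H_*(P) \otimes_{Q_\fm} Q'_{\fm'}$) together with $g(\fm)Q'_{\fm'} = \fm'Q'_{\fm'}$, which forces the homology to be $\fm'$-primary torsion; the paper avoids this check by not constructing the functor explicitly.
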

  
  \begin{proof} Let $\widehat{Q}$ (resp. $\widehat{Q'}$) denote the $\fm$-adic (resp. $\fm'$-adic) completion of $Q$ (resp. $Q'$). The assumptions on $g$ imply it induces an isomorphism $\widehat{Q} \xra{\cong} \widehat{Q'}$. The
    first assertion follows since the canonical maps
    $$
    HH_*(mf^{\fm}(Q_\fm,0)) \to HH_*(mf^{\fm}(\widehat{Q},0))
    $$
    and 
    $$
    HH_*(mf^{\fm'}(Q'_{\fm'},0)) \to HH_*(mf^{\fm}(\widehat{Q'},0))
    $$ 
    are isomorphisms by Remark \ref{moritaremark}.

As for the second assertion, let $n = \dim(Q_\fm)$, choose a regular system of parameters $x_1, \dots, x_n$ of $Q_\fm$, and construct the exterior algebra
    $\Lambda$ using this system of parameters, as in Subsection \ref{calculation}. The hypotheses ensure that 
$g(x_1), \dots, g(x_n)$ form a regular system of parameters for $Q'_{\fm'}$, and we 
let $\Lambda'$ be the associated exterior algebra.
  We have a commutative diagram 
    $$
    \xymatrix{
    HH_*(\Lambda) \ar[d]^-{\cong} \ar[r]^-{\cong} & HH_*(\Lambda') \ar[d]^-{\cong} \\
    HH_*(mf^{\fm}(Q_\fm,0)) \ar[r] & HH_*(mf^{\fm'}(Q'_{\fm'},0)), \\
    }
    $$
    where the vertical isomorphisms are as in Lemma \ref{lem529}. By Proposition \ref{prop1116}, it now suffices to observe that the composition
    $$
    HH_*(\Lambda) \xra{\cong} HH_*(\Lambda') \to k,
    $$
    where the second map is induced by the augmentation $\Lambda' \to k$, coincides with the map induced by the augmentation $\Lambda \to k$. 
      \end{proof}

     \begin{lem} \label{lem528c}
       Suppose $Q$, $Q'$ are essentially smooth $k$-algebras and $\fm' \subseteq Q'$, $\fm'' \subseteq Q''$ are $k$-rational maximal ideals.
       Set $Q = Q' \otimes_k Q''$ and $\fm = \fm' \otimes_k Q'' + Q' \otimes_k \fm''$. Then $Q$ is an essentially smooth $k$-algebra, $\fm$ is
a $k$-rational maximal ideal of $Q$,  and the diagram
     $$
     \xymatrix{
       HH_*(mf^{\fm'}(Q'_{\fm'},0)) \otimes_k  HH_*(mf^{\fm''}(Q''_{\fm''},0))  \ar[r]^-{\tstar} \ar[d]_-{\trace \otimes \trace} 
   &         HH_*(mf^{\fm}(Q_\fm,0))  \ar[d]^-{\trace} \\
 k \otimes_k k \ar[r]^-\cong & k
}
$$
commutes.
\end{lem}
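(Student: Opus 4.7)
The plan is to reduce the assertion to an elementary statement about augmentations of exterior algebras, by transporting both K\"unneth maps across the Morita equivalence of Lemma \ref{lem529} and applying Proposition \ref{prop1116} to identify the trace with an augmentation.

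First I would verify the standing hypotheses: $Q = Q' \otimes_k Q''$ is essentially smooth over $k$, being a tensor product of two such algebras, and
\[
Q/\fm \cong (Q'/\fm') \otimes_k (Q''/\fm'') \cong k,
\]
so $\fm$ is a $k$-rational maximal ideal. Next I would choose regular systems of parameters $x_1', \dots, x_{n'}'$ for $Q'_{\fm'}$ and $x_1'', \dots, x_{n''}''$ for $Q''_{\fm''}$ and observe that their images in $Q_\fm$ form a regular system of parameters of length $n'+n'' = \dim Q_\fm$. The associated folded Koszul complexes then satisfy $K = K' \otimes_k K''$, the exterior subalgebras of Subsection \ref{calculation} satisfy $\Lambda = \Lambda' \otimes_k \Lambda''$ as dg-$k$-algebras, and the augmentations are related by $\eta = \eta' \otimes \eta''$.

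Next I would assemble the big diagram obtained by stacking the K\"unneth square for the strict inclusions $\Lambda' \into mf^{\fm'}(Q'_{\fm'},0)$ and $\Lambda'' \into mf^{\fm''}(Q''_{\fm''},0)$ (via Lemma \ref{key} applied to strict cdg-functors), taking advantage of Lemma \ref{lem526a} to pass from $Q'_{\fm'} \otimes_k Q''_{\fm''}$ to $Q_\fm$ without disturbing either Hochschild homology or the trace. Proposition \ref{prop1116} then lets me replace each trace by the augmentation-induced composition $HH_*(\Lambda') \to HH_*(k) \xra{\cong} k$, and similarly for $\Lambda''$ and $\Lambda$. Applying Lemma \ref{key} one more time, now to the strict cdg-morphisms $\eta'$ and $\eta''$ whose tensor product is $\eta$, reduces everything to checking that the K\"unneth map
\[
HH_*(k) \otimes_k HH_*(k) \xra{\tstar} HH_*(k \otimes_k k) = HH_*(k)
\]
corresponds to multiplication $k \otimes_k k \to k$ under the canonical isomorphism $HH_*(k) \cong k$, which is immediate from the explicit formula defining $\tstar$.

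The hard part will be the bookkeeping in the middle step: verifying that the K\"unneth map of the statement, obtained from \eqref{E1215a} by post-composing with the base-change map $Q'_{\fm'} \otimes_k Q''_{\fm''} \to Q_\fm$, corresponds under the Morita quasi-isomorphisms of Lemma \ref{lem529} to the K\"unneth map $HH(\Lambda') \otimes_k HH(\Lambda'') \xra{\tstar} HH(\Lambda' \otimes_k \Lambda'') = HH(\Lambda)$ for the exterior algebras. This is an instance of Lemma \ref{key} combined with Lemma \ref{lem526a}, but identifying the correct diagram of cdg-functors, and in particular using the identification $K = K' \otimes_k K''$ to align the two Morita models, will require some care.
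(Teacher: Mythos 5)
Your plan follows the paper's proof essentially step for step: choose regular systems of parameters to identify $\Lambda \cong \Lambda' \otimes_k \Lambda''$, use Lemma \ref{lem529} and the K\"unneth naturality (Lemma \ref{key}) to reduce the square to one for the exterior algebras, and then invoke Proposition \ref{prop1116} to replace the traces by augmentations. The paper stops at observing that the tensor product of augmentations $\Lambda' \otimes_k \Lambda'' \to k$ coincides with the augmentation of $\Lambda$, while you push one further application of K\"unneth naturality to reduce all the way to $HH_*(k)$, but this is a cosmetic difference.
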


\begin{proof}The first two assertions are standard facts. As for the final one, let $n'$ and $n''$ denote the dimensions of $Q'_{\fm'}$ and $Q''_{\fm''}$, resp. Choose regular systems of
parameters $x_1, \dots, x_{n'}$ and $y_1, \dots, y_{n''}$ of $Q'_{\fm'}$ and $Q''_{\fm''}$, resp., so that $x_1, \dots, x_{n'}, y_1, \dots, y_{n''}$ form a regular system of
parameters of $Q_\fm$.
As in the proof of Lemma \ref{lem526a}, let $\Lambda, \Lambda'$, and $\Lambda''$ be exterior algebras associated to these systems of parameters, as constructed in Subsection \ref{calculation}.
By Lemma \ref{lem112}, we have a commutative square
$$
\xymatrix{
HH_*(\Lambda') \otimes_k HH_*(\Lambda'') \ar[r]^-{\cong} \ar[d]^-{\tstar} &  HH_*(mf^{\fm'}(Q'_{\fm'},0)) \otimes_k  HH_*(mf^{\fm''}(Q''_{\fm''},0)) \ar[d]^-{\tstar}\\
HH_*(\Lambda) \ar[r]^-{\cong} 
  &         HH_*(mf^{\fm}(Q_\fm,0)),
   } 
$$
where the horizontal isomorphisms are as in Lemma \ref{lem529}. By Proposition \ref{prop1116}, it now suffices to observe that the composition
$$
\Lambda \xra{\cong} \Lambda' \otimes_k \Lambda'' \to k,
$$
where the second map is the tensor product of the augmentations, coincides with the augmentation $\Lambda \to k$.
 \end{proof}

%%%%%%%%%%%%%%%%%%%%%%%%%%%%%%%
\subsection{The canonical pairing on Hochschild homology}
\label{canonical}
%%%%%%%%%%%%%%%%%%%%%%%%%%%%%%%
A $k$-linear differential $\Z/2$-graded category $\cC$ is called {\em proper} if,
for all pairs of objects $(X,Y)$, $\dim_k H_i \Hom_{\cC}(X,Y) < \infty$ for $i = 0, 1$.

\begin{defn} For a proper differential $\Z/2$-graded category $\cC$, the {\em canonical pairing for Hochschild homology} is the map
$$
\eta_\cC(-,-): HH_*(\cC) \otimes_k HH_*(\cC) \to k
$$
given by the composition
$$
\begin{aligned}
HH_*(\cC) \otimes_k HH_*(\cC) 
& \xra{\id \otimes \Phi} HH_*(\cC) \otimes_k HH_*(\cC^\op) \\
& \xra{\tstar} HH_*(\cC \otimes_k \cC^\op) \\
& \xra{HH((X,Y) \mapsto \Hom_\cC(Y,X))} HH_*(\Perf_{\Z/2}(k)) \\
& \xla{\cong} k  \\
\end{aligned}
$$
where $\Phi$ is the map defined in \eqref{E1215c}.
\end{defn}

When $\Sing(Q/f) = \{\fm\}$, $mf(Q, f)$ is proper, so we have the canonical pairing
$$
\eta_{mf}: HH_*(mf(Q,f)) \otimes_k HH_*(mf(Q,f)) \to k.
$$

\begin{lem} \label{lem1119t}
When $\Sing(Q/f) = \{\fm\}$, $\eta_{mf}$ coincides with the pairing given by the composition
$$
\begin{aligned}
HH_*(mf(Q,f)) \otimes_k HH_*(mf(Q,f)) 
& \xra{\id \otimes \Psi } HH_*(mf(Q,f)) \otimes_k HH_*(mf(Q,-f))  \\
& \xra{\star} HH_*(mf^{\fm}(Q,0))  \\
& \xra{\trace} k,  \\
\end{aligned}
$$
where $\Psi$ is defined in \eqref{E1215d}. 
\end{lem}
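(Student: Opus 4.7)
The plan is to unwind both pairings into compositions passing through $HH_*(mf^\fm(Q,0))$ and then show that the resulting functorial compositions agree. The key tools will be: the factorization $\Psi = D_* \circ \Phi$ recorded in Section \ref{duals}; the naturality of the K\"unneth map under strict dg-functors (Lemma \ref{key}(3)); the canonical isomorphism $\Hom(Y, X) \cong D(Y) \otimes_Q X$ for matrix factorizations (Section \ref{mf}); and the observation that under the hypothesis $\Sing(Q/f) = \{\fm\}$, every Hom complex $\Hom(Y, X)$ for $X, Y \in mf(Q,f)$ is supported at $\fm$, by Proposition \ref{prop116a} and \eqref{homeqn}.

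First I would rewrite $\eta_{mf}$ in this way. The evaluation functor $\cH \colon mf(Q,f) \otimes_k mf(Q,f)^\op \to \Perf_{\Z/2}(k)$ used in the definition of $\eta_\cC$ sends $(X,Y) \mapsto \Hom(Y,X)$, and by the observation just mentioned this functor factors as
$$
mf(Q,f) \otimes_k mf(Q,f)^\op \xra{\cH_\fm} mf^\fm(Q, 0) \xra{r} \Perf_{\Z/2}(k),
$$
where $r$ is restriction of scalars along $k \to Q$. Since by construction $\trace = v^{-1} \circ r_*$, this gives
$$
\eta_{mf} = \trace \circ (\cH_\fm)_* \circ \tstar \circ (\id \otimes \Phi).
$$

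Next I would expand the right-hand side of the claimed equality. Writing $\star = \mu^{mf}_* \circ \tstar$, where $\mu^{mf} \colon mf(Q,f) \otimes_k mf(Q,-f) \to mf^\fm(Q,0)$ is tensor product of matrix factorizations followed by base change along the multiplication $Q \otimes_k Q \to Q$, and using $\Psi = D_* \circ \Phi$, the claimed composition becomes
$$
\trace \circ \mu^{mf}_* \circ \tstar \circ (\id \otimes D_*) \circ (\id \otimes \Phi).
$$
The duality $D$ is a strict isomorphism of dg-categories, so Lemma \ref{key}(3) gives $\tstar \circ (\id \otimes D_*) = (\id \otimes D)_* \circ \tstar$. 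The composite dg-functor $\mu^{mf} \circ (\id \otimes D)$ sends $(X, Y) \mapsto X \otimes_Q D(Y)$, and the symmetry of the tensor product together with the canonical identification $\Hom(Y, X) \cong D(Y) \otimes_Q X$ furnishes a natural isomorphism of dg-functors $\mu^{mf} \circ (\id \otimes D) \cong \cH_\fm$. Since naturally isomorphic dg-functors induce the same map on Hochschild homology, the right-hand side equals $\trace \circ (\cH_\fm)_* \circ \tstar \circ (\id \otimes \Phi) = \eta_{mf}$.

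The main obstacle I anticipate is the coherence of sign conventions: the isomorphism $\Hom(Y, X) \cong D(Y) \otimes_Q X$, the antipode-type map $\Phi$, the shuffle signs in $\tstar$, and the signs built into $D$ each contribute Koszul factors, so verifying that the proposed natural isomorphism of dg-functors really holds on the nose (and hence induces an equality, rather than an equality up to a sign, on Hochschild homology) is the step most likely to require careful attention.
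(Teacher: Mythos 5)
Your proposal is correct and follows essentially the same approach as the paper's proof: factoring $\Psi = D_* \circ \Phi$, applying the naturality of $\tstar$ under strict dg-functors (Lemma \ref{lem112}(3)) to move $\id \otimes D$ past the K\"unneth map, and then observing that the dg-functor $(X,Y) \mapsto X \otimes_Q D(Y)$ to $mf^\fm(Q,0)$ followed by restriction of scalars is naturally isomorphic to $(X,Y) \mapsto \Hom(Y,X)$. The paper simply says "this is clear" where you (appropriately) flag the natural-isomorphism step as the place to verify signs.
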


\begin{proof}

By Lemma \ref{lem112}, there is a commutative square
$$
\xymatrix{
HH_*(mf(Q, f)) \otimes_k HH_*(mf(Q, f)^{\op}) \ar[rr]^-{HH(\id) \otimes HH(D)} \ar[d]^-{- \tstar - } && HH_*(mf(Q, f)) \otimes_k HH_*(mf(Q, -f)) \ar[d]^-{ - \tstar - } \\
HH_*(mf(Q, f) \otimes_k mf(Q, f)^{\op}) \ar[rr]^-{HH(\id \otimes D)} && HH_*(mf(Q, f) \otimes_k mf(Q, -f)),
}
$$
where $D$ is the dg-functor defined in Subsection \ref{duals}. Therefore, it suffices to show the composition
\begin{align*}
HH_*(mf(Q, f) \otimes_k mf(Q, f)^{\op}) & \xra{ 1 \otimes D} HH_*(mf(Q, f) \otimes_k mf(Q, -f)) \\ 
& \xra{\on{can}} HH_*(mf^\fm(Q, 0)) \\
& \xra{\on{Forget}} HH_*(\Perf_{\Z/2}(k))
\end{align*}
coincides with the map induced by the dg-functor
$$
mf(Q, f) \otimes_k mf(Q, f)^{\op} \to \Perf_{\Z/2}(k)
$$
given by $(X, Y) \mapsto \Hom_{mf}(Y, X)$, and this is clear. 
\end{proof}

%%%%%%%%%%%%%%%%%%%%%%%%%%%%
\subsection{The residue map}
\label{residuemap}
%%%%%%%%%%%%%%%%%%%%%%%%%%%%

Assume that
$Q$ is an essentially smooth $k$-algebra and $\fm$ is a $k$-rational maximal ideal of $Q$. Let $n$ be the Krull
dimension of $Q_\fm$.
In this subsection, we recall the definition of Grothendieck's residue map
$$
\Gres: H^n_\fm(\Omega^n_{Q_\fm/k}) \to k
$$
and some of its properties. 
Recall from Subsection \ref{sec:HKR} that for any system of parameters $x_1, \dots, x_n$ of $Q_\fm$,
we have a canonical isomorphism
\begin{equation} \label{E93}
H^n_\fm(\Omega^n_{Q_\fm/k}) \cong H^n(\cC(x_1, \dots, x_n) \otimes_{Q_\fm} \Omega^n_{Q_\fm/k} ).
\end{equation}
We will temporarily use $\Z$-gradings and index things cohomologically, using superscripts. In particular, $\Omega^\bu_{Q_\fm/k}$ is a graded $Q_\fm$-module with
$\Omega^j_{Q_\fm/k}$ declared to have cohomological degree $j$.

We introduce some 
notation that will be convenient when computing with the augmented \v{C}ech complex. First form 
the exterior algebra over $Q_\fm[1/x_1, \dots, 1/x_n]$ on (cohomological) degree $1$ generators $\a_1, \dots \a_n$, and make it a complex with differential given as 
left multiplication by the degree $1$ element $\sum_i \a_i$. We identify  $\cC(x_1, \dots, x_n)$ as the subcomplex whose degree $j$ component is
$$
\bigoplus_{i_1 < \cdots < i_j} Q_\fm\left[\frac{1}{x_{i_1} \cdots x_{i_j}}\right] \a_{i_1} \cdots \a_{i_j}.
$$

Define 
$$
E(x_1, \dots, x_n): = 
\frac{Q_\fm[1/x_1, \dots, 1/x_n]}{\sum_j Q_\fm[1/x_1, \dots, \widehat{1/x_j}, \dots, 1/x_n]}.
$$
Since $x_1, \dots, x_n$ is a regular sequence, there is an isomorphism
$$
E(x_1, \dots, x_n) \xra{\cong} H^n(\cC(x_1, \dots, x_n) )
$$
sending $\overline{g}$ to $\overline{g \a_1 \cdots \a_n}$ for $g \in Q_\fm[1/x_1, \dots, 1/x_n]$.
Using that $\Omega^n_{Q_\fm/k}$ is a flat $Q_\fm$-module, we obtain the isomorphism
\begin{equation} \label{E528}
   H^{n}(\cC(x_1, \dots, x_n) \otimes_{Q_\fm} \Omega^n_{Q_\fm/k})
\cong E(x_1, \dots, x_n)  \otimes_{Q_\fm}  \Omega^n_{Q_\fm/k}.
\end{equation}   
Every element of $E(x_1, \dots, x_n)  \otimes_{Q_\fm}  \Omega^n_{Q_\fm/k}$ is a sum of terms of the form
$$
\frac{1}{x_1^{a_1} \cdots x_n^{a_n}} \otimes \omega
$$
with $a_i \geq 1$ and $\omega \in \Omega^n_{Q_\fm/k}$, and this element corresponds to
\begin{equation} \label{E824}
\frac{\a_1 \cdots \a_n}{x_1^{a_1} \cdots x_n^{a_n}} \otimes \omega \in H^{n}(\cC(x_1, \dots, x_n) \otimes_{Q_\fm} \Omega^n_{Q_\fm/k})
\end{equation}
under the isomorphism  \eqref{E528}.

\begin{defn}  \label{def825b}
Given a system of parameters $x_1, \dots, x_n$ for $Q_\fm$, integers $a_i \geq 1$ for each $1 \leq i \leq n$, and an $n$-form  
$\omega \in \Omega^n_{Q_\fm/k}$,  the {\em generalized fraction}
$$
\left[\frac{\omega}{x_1^{a_1}, \dots, x_n^{a_n}}\right] \in H^n_\fm(\Omega^n_{Q_\fm/k})
$$
is the class corresponding to the element in \eqref{E824} under the canonical isomorphism \eqref{E93}. 
\end{defn}

% ------------------

% \begin{prop} \label{prop824b}
% Suppose 
%  $x_1, \dots, x_n$ and $y_1, \dots, y_n$ are two systems of parameters for $Q_\fm$ and that there exists an  $n \times n$ matrix $A$ with entries in
%   $Q_\fm$ such that the matrix equation $(y_1, \dots, y_n)  = (x_1, \dots, x_n) A $ holds. Then
% $$
% \left[  \frac{\omega} {x_1, \cdots, x_n}  \right] =
% \left[  \det(A) \frac{\omega} {y_1, \cdots, y_n}  \right].
% $$
% \end{prop}

% \begin{ex}
% Let $\s$ be a permutation of $1, \dots, n$. Then
% $$
% \Gres \left[  \frac{\omega} {x_1, \cdots, x_n}  \right] = \sgn(\s)
% \Gres \left[  \frac{\omega} {x_{\s(1)}, \cdots, y_{\s(n)}}  \right].
% $$
% \end{ex}

% \begin{proof}[Proof included temporarity just for my own sake]
% ...
% \end{proof}

To define Grothendieck's residue map, we now assume $x_1, \dots, x_n$ is a \emph{regular} system of parameters. Since $\fm$ is $k$-rational, the $\fm$-adic completion $\widehat{Q}$ of $Q$ is isomorphic to the ring of formal power series
$k[[x_1, \dots, x_n]]$, and  a basis for $E(x_1, \dots, x_n)$ as a $k$-vector space is given by the set 
$\{\frac{1}{x_1^{a_1} \cdots x_n^{a_n}} \mid a_i \geq 1\}$. We also have that $\Omega^n_{Q_\fm/k}$ is a free $Q_\fm$-module of rank one spanned by $dx_1 \cdots dx_n$. 
It follows that the set 
$$
\left\{  \left[\frac{dx_1 \cdots dx_n}{x_1^{a_1}, \cdots, x_n^{a_n}}\right] \mid a_i \geq 1\right\}
$$
is a $k$-basis of
$H^n_\fm(\Omega^n_{Q_\fm/k})$. 

\begin{defn} \label{def825}
Grothendieck's residue map $\Gres: H^n_{\fm}(\Omega^n_{Q/k}) \to k$ is the unique  $k$-linear map 
such that,
if $x_1, \dots, x_n$ is a regular system of parameters of $Q_\fm$, then 
\begin{equation} \label{E527}
\Gres \left[  \frac{dx_1 \cdots dx_n} {x_1^{a_1}, \cdots, x_n^{a_n}} \right] =
\begin{cases} 
1 & \text{if $a_i = 1$ for all $i$, and } \\
0 & \text{otherwise.}
\end{cases}  
\end{equation}
\end{defn}
See \cite[Theorem 5.2]{kunz} for a proof that this definition is independent of the choice of $x_1, \dots, x_n$.

We now revert to the $\Z/2$-grading used throughout most of this paper. In particular, we regard $\Omega^\bu_{Q_\fm/k}$ 
as a $\Z/2$-graded $Q_\fm$-module with $\Omega^j_{Q_\fm/k}$ located in degree $j \pmod 2$, and we use subscripts to indicate degrees. 

\begin{defn}
The {\em residue map} for the $\Z/2$-graded $Q_\fm$-module $\Omega^\bu_{Q_\fm/k}$ is the map
$$
\on{res}= \on{res}_{Q,\fm}: H_{2n} \R\Gamma_\fm(\Omega^\bu_{Q_\fm/k})  \to k,
$$
defined as the composition
$$
H_{2n} \R\Gamma_\fm(\Omega^\bu_{Q_\fm/k})  
\onto H_{2n} \R\Gamma_\fm(\Sigma^{-n} \Omega^n_{Q_\fm/k}) \cong  H_\fm^n(\Omega^n_{Q_\fm/k}) \xra{\Gres} k,
$$
where the first map is induced by the canonical projection $\Omega^\bu_{Q_\fm/k} \onto \Sigma^{-n} \Omega^n_{Q_\fm/k}$.
\end{defn}

We will need the following two properties of the residue map:

\begin{lem}  \label{lem531} 
  Suppose $Q$ and $Q'$ are essentially smooth $k$-algebras and $\fm \subseteq Q$, $\fm' \subseteq Q'$ are $k$-rational maximal ideals. Let 
 $g: Q \to Q'$ be a $k$-algebra map such that $g^{-1}(\fm') = \fm$, the induced map $Q_{\fm} \to Q'_{\fm'}$ is flat, and $g(\fm) Q'_{\fm'} = \fm' Q'_{\fm'}$. Then  $Q_\fm$ and $Q'_{\fm'}$ have the same Krull dimension, say $n$; $g$ induces an isomorphism
  $$
  g_*: H_{2n} \R\Gamma_\fm (\Omega^\bu_{Q_\fm/k}) \xra{\cong}  H_{2n} \R\Gamma_{\fm'} (\Omega^\bu_{Q'_{\fm'}/k})
    $$
    of $k$-vector spaces; and we have
    $$
    \res_{Q',\fm'} \circ g_* = \res_{Q,\fm}.
    $$
  \end{lem}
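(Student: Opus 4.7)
The plan is to reduce everything to a direct computation with generalized fractions after choosing compatible regular systems of parameters.

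First I would verify the dimension statement: because $Q_\fm \to Q'_{\fm'}$ is flat local and $\fm Q'_{\fm'} = \fm' Q'_{\fm'}$, the closed fiber $Q'_{\fm'}/\fm Q'_{\fm'}$ equals $Q'_{\fm'}/\fm' Q'_{\fm'} = k$, so has dimension $0$. The standard dimension formula for flat local maps then gives $\dim Q'_{\fm'} = \dim Q_\fm = n$. Next I would fix a regular system of parameters $x_1, \dots, x_n$ for $Q_\fm$. Since $g(x_1), \dots, g(x_n)$ generate $\fm' Q'_{\fm'}$ and $Q'_{\fm'}$ is an $n$-dimensional regular local ring, these elements form a regular system of parameters for $Q'_{\fm'}$. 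Moreover, because $\fm$ and $\fm'$ are both $k$-rational and $Q, Q'$ are essentially smooth, the forms $dx_1, \dots, dx_n$ (resp. $dg(x_1), \dots, dg(x_n)$) give $Q_\fm$-bases (resp. $Q'_{\fm'}$-bases) of the rank $n$ modules $\Omega^1_{Q_\fm/k}$ and $\Omega^1_{Q'_{\fm'}/k}$.

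To define and analyze $g_*$, I would model $\R\Gamma_\fm$ and $\R\Gamma_{\fm'}$ by the \v{C}ech complexes on $x_1, \dots, x_n$ and $g(x_1), \dots, g(x_n)$ respectively. The map $g$ induces a chain map
$$
\cC(x_1,\dots,x_n) \otimes_{Q_\fm} \Omega^\bu_{Q_\fm/k} \;\longrightarrow\; \cC(g(x_1),\dots,g(x_n)) \otimes_{Q'_{\fm'}} \Omega^\bu_{Q'_{\fm'}/k},
$$
which represents $g_*$ on $H_{2n}$ after passing to the $\Z/2$-folding (the target, viewed as a complex of $Q_\fm$-modules, is supported at $\{\fm\}$, so the augmentation \eqref{cechaugmentation} identifies it with $\R\G_{\fm'}(\Omega^\bu_{Q'_{\fm'}/k})$ in $D(Q_\fm)$). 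Reading off the effect on the generalized fraction representatives of Definition~\ref{def825b}, this chain map sends
$$
\left[\frac{dx_1 \cdots dx_n}{x_1^{a_1},\dots,x_n^{a_n}}\right] \;\longmapsto\; \left[\frac{dg(x_1) \cdots dg(x_n)}{g(x_1)^{a_1},\dots,g(x_n)^{a_n}}\right].
$$
Since $\{a_i \ge 1\}$-indexed generalized fractions form $k$-bases of $H^n_\fm(\Omega^n_{Q_\fm/k})$ and $H^n_{\fm'}(\Omega^n_{Q'_{\fm'}/k})$ (by the computation of $E(x_1, \dots, x_n)$ preceding Definition~\ref{def825}, applied to both regular systems of parameters), the induced map $g_*$ on $H_{2n}$ carries basis to basis and is therefore an isomorphism.

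Finally, the residue compatibility is immediate from Definition~\ref{def825}: by its defining property for the regular system $g(x_1),\dots,g(x_n)$, $\res_{Q',\fm'}$ sends the image fraction to $1$ if all $a_i = 1$ and to $0$ otherwise, which matches the value of $\res_{Q,\fm}$ on the original fraction. I expect no serious obstacle; the only point needing care is checking that the naive map of \v{C}ech complexes genuinely represents $g_*$ under the identification of $\R\G_{\fm'}$ with a $Q_\fm$-linear complex, and this is a formal consequence of \eqref{cechaugmentation} together with the containment $g(\fm) Q'_{\fm'} \subseteq \fm'Q'_{\fm'}$.
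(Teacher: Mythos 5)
Your proposal is correct and follows essentially the same route as the paper's proof: fix a regular system of parameters $x_1,\dots,x_n$ for $Q_\fm$, observe that $g(x_1),\dots,g(x_n)$ is a regular system of parameters for $Q'_{\fm'}$, trace the effect on generalized fractions, and conclude via \eqref{E527}. You fill in somewhat more detail on the dimension count and the \v{C}ech model than the paper does (which instead invokes the induced isomorphism on completions), but the argument is the same.
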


  \begin{proof} 
Let $x_1, \dots, x_n$ be a regular
system of parameters for $Q_\fm$, and set $x_i' = g(x_i)$. 
    The assumptions on $g$ give that $x'_1, \dots, x'_n$ form
    a regular system of parameters for $Q'_{\fm'}$, and hence the induced map on completions is an isomorphism. The
first two assertions follow.

The map $E(x_1, \dots, x_n)  \otimes_{Q_\fm} \Omega^n_{Q_\fm/k} 
\to E(x_1', \dots, x_n') \otimes_{Q'_{\fm'}}
    \Omega^n_{Q'_{\fm'}/k}$ induced by $g$ sends
    $\frac{\a_1 \cdots \a_n}{x_1^{a_1} \cdots x_n^{a_n}}  \otimes dx_1 \cdots dx_n$
    to the expression obtained by substituting $x_i'$ for $x_i$, and thus
$$
g_*\left[\frac{dx_1 \cdots dx_n}{x_1^{a_1}, \dots, x_n^{a_n}}\right]
=\left[\frac{dx'_1 \cdots dx'_n}{(x'_1)^{a_1}, \dots, (x'_n)^{a_n}}\right].
$$
The equation $\res_{Q',\fm'} \circ g_* = \res_{Q,\fm}$ follows from \eqref{E527}.
  \end{proof}

  \begin{lem} \label{lem531b} Let $(Q', \fm')$, $(Q'', \fm'')$, and $(Q, \fm) = (Q' \otimes_k Q'', \fm' \otimes_k Q'' + Q' \otimes_k \fm'')$ be as in Lemma \ref{lem528c}. Set $m = \dm(Q')$ and $n = \dm(Q'')$. 
 The diagram
     $$
     \xymatrix{
       H_{2m} \R \G_\fm(\Omega^\bu_{Q'_{\fm'}/k}) \otimes_k       H_{2n} \R \G_{\fm''}(\Omega^\bu_{Q''_{\fm''}/k}) \ar[r]^-{ \smsh} \ar[d]_{\res_{Q',\fm'} \otimes \res_{Q'',\fm''}} 
   &  H_{2m+2n} \R \G_{\fm}(\Omega^{\bu}_{Q_\fm/k})  \ar[d]^{\res_{Q,\fm}} \\
 k \otimes_k k \ar[r]^\cong & k
}
$$
commutes up to the sign $(-1)^{mn}$.
\end{lem}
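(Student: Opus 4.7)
The plan is to carry out the comparison explicitly on generalized fractions, reducing the question to tracking a single Koszul sign.

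First, I will choose regular systems of parameters $x_1, \ldots, x_m$ of $Q'_{\fm'}$ and $y_1, \ldots, y_n$ of $Q''_{\fm''}$; their images in $Q_\fm$ then form a regular system of parameters for $Q_\fm$, since $\fm Q_\fm$ is generated by these elements and $\dim Q_\fm = m+n$. Using the \v{C}ech complex model from Subsection \ref{sec:HKR}, I will realize $\R\G_{\fm'}(\Omega^\bu_{Q'_{\fm'}/k})$, $\R\G_{\fm''}(\Omega^\bu_{Q''_{\fm''}/k})$, and $\R\G_\fm(\Omega^\bu_{Q_\fm/k})$ by
$$
\cC(\vec x) \otimes_{Q'_{\fm'}} \Omega^\bu_{Q'_{\fm'}/k}, \quad \cC(\vec y) \otimes_{Q''_{\fm''}} \Omega^\bu_{Q''_{\fm''}/k}, \quad \cC(\vec x, \vec y) \otimes_{Q_\fm} \Omega^\bu_{Q_\fm/k},
$$
respectively. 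Because $\cC(\vec x, \vec y) \cong \cC(\vec x) \otimes_k \cC(\vec y)$ and $\Omega^\bu_{Q_\fm/k} \cong \Omega^\bu_{Q'_{\fm'}/k} \otimes_k \Omega^\bu_{Q''_{\fm''}/k}$ as $\Z/2$-graded algebras, the pairing $\smsh$ is represented at the chain level by the natural shuffle identification
$$
(c' \otimes \omega') \otimes (c'' \otimes \omega'') \;\longmapsto\; (-1)^{|\omega'|\,|c''|}\,(c' \otimes c'') \otimes (\omega' \wedge \omega''),
$$
with signs dictated by the Koszul convention for $\Z/2$-graded tensor products.

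Next, I will test this on generalized fractions in the top local cohomology. An arbitrary class in $H_{2m}\R\G_{\fm'}(\Omega^\bu_{Q'_{\fm'}/k})$ is represented via \eqref{E528} and Definition \ref{def825b} by a sum of elements of the form $\bigl[\tfrac{\omega'}{x_1^{a_1}, \ldots, x_m^{a_m}}\bigr]$ with $\omega' \in \Omega^m_{Q'_{\fm'}/k}$, and similarly for the $\fm''$-side. In the \v{C}ech model, the representing cocycle for $\bigl[\tfrac{\omega''}{y_1^{b_1}, \ldots, y_n^{b_n}}\bigr]$ sits in $\Z$-degree $n$, hence in $\Z/2$-degree $n$, while $\omega'$ has $\Z/2$-degree $m$; so the Koszul sign in the shuffle map above is $(-1)^{mn}$, and one obtains
\begin{equation*}
\left[\frac{\omega'}{x_1^{a_1},\ldots,x_m^{a_m}}\right] \smsh \left[\frac{\omega''}{y_1^{b_1},\ldots,y_n^{b_n}}\right] = (-1)^{mn}\left[\frac{\omega'\wedge\omega''}{x_1^{a_1},\ldots,x_m^{a_m},y_1^{b_1},\ldots,y_n^{b_n}}\right].
\end{equation*}

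Finally, I will apply Grothendieck's residue (Definition \ref{def825}) to both sides. Writing $\omega' = g' \cdot dx_1\cdots dx_m$ and $\omega'' = g''\cdot dy_1\cdots dy_n$ and expanding $g',g''$ in $k$-bases of the completions (legitimate since both sides of the diagram factor through completion, as in Lemma \ref{lem531}), the question reduces by $k$-linearity to the case $\omega' = dx_1\cdots dx_m$, $\omega'' = dy_1\cdots dy_n$. Under \eqref{E527}, $\res_{Q',\fm'}\otimes\res_{Q'',\fm''}$ applied to the left-hand side of the displayed identity equals $1$ when all $a_i = b_j = 1$ and vanishes otherwise, while $\res_{Q,\fm}$ applied to the right-hand side equals $(-1)^{mn}$ under the same conditions and vanishes otherwise. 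Hence $\res_{Q,\fm}\circ\smsh = (-1)^{mn}\cdot(\res_{Q',\fm'}\otimes\res_{Q'',\fm''})$, which is the claim. The only real subtlety is the careful tracking of the Koszul sign through the shuffle identification of the three \v{C}ech-form bicomplexes; once this is in place the residue computation is immediate from the definition.
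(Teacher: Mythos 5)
Your proposal is correct and follows essentially the same route as the paper: both reduce to computing $\smsh$ on \v{C}ech representatives of generalized fractions in the top local cohomology, where the $(-1)^{mn}$ arises as the Koszul sign from interchanging the degree-$m$ form $\omega'$ (equivalently, $dx'_1\cdots dx'_m$) with the degree-$n$ \v{C}ech cochain $\a''_1\cdots\a''_n$, and then apply Definition \ref{def825}. The paper phrases the reduction by first replacing $\Omega^\bu$ with its top exterior power; your reduction by expanding in a $k$-basis of the completion is equivalent and no less rigorous.
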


\begin{proof} It suffices to prove the analogous diagram given by replacing 
$\Omega^\bu_{Q'_{\fm'}/k}$ and
$\Omega^\bu_{Q''_{\fm''}/k}$ with
$\Omega^m_{Q'_{\fm'}/k}$ and
$\Omega^n_{Q''_{\fm''}/k}$ commutes. 
Let $x'_1, \dots, x'_m$ and $x''_1, \dots, x''_n$ be
regular systems of parameters for $Q'_{\fm'}$ and $Q''_{\fm''}$.
  Then, upon identifying $x_i'$ and $x_j''$ with the elements $x_i' \otimes 1$ and $1 \otimes x_i''$ 
  of $Q_\fm$, the sequence $x_1', \dots, x_m', x_1'', \dots, x_n''$ forms a regular
system of parameters for $Q_\fm$. We use these three regular systems of a parameters
to identify
  $H_{2m} \R\G_{\fm'}( \Omega^m_{Q'_{\fm'/k}})$ with $H_{2m} (\cC(x'_1, \dots, x'_m) \otimes_{Q'_{\fm'}}\Omega^m_{Q'_{\fm'}/k}) $ and similarly for $Q''$ and $Q$. 
Under these identifications, the map labelled $\smsh$ in the diagram sends
  $$
  \frac{\a'_1 \cdots \a'_m}{x'_1 \cdots x'_m}  \otimes dx'_1 \cdots dx'_m  \otimes
 \frac{\a''_1 \cdots \a''_n}{x''_1 \cdots x''_n}  \otimes  dx''_1 \cdots dx''_n 
  $$
  to
  $$
(-1)^{mn}   \frac{\a'_1 \cdots \a'_m\a''_1 \cdots \a''_n}{x'_1 \cdots x'_mx''_1 \cdots x''_m} \otimes dx'_1 \cdots dx'_m dx''_1 \cdots dx''_n   ,
  $$
  with the sign arising since the $dx'_i$'s and $\a''_j$'s have odd degree. The result now follows from Definition \ref{def825b} and \eqref{E527}.
  \end{proof}

%%%%%%%%%%%%%%%%
\subsection{The residue pairing}
%%%%%%%%%%%%%%%%

We assume $Q$, $k$  and $\fm$ are as in Subsection \ref{residuemap}. All gradings in this section are $\Z/2$-gradings. Fix $f \in Q$, and assume  $\Sing(f: \Spec(Q) \to \A^1_k) = \{\fm\}$. Then the canonical map
$$
(\Omega_{Q/k}^\bu, -df) \to (\Omega^\bu_{Q_\fm/k}, -df)
$$
is a quasi-isomorphism, and the only non-zero homology module is
$$
\frac{\Omega^n_{Q/k}}{df \smsh \Omega^{n-1}_{Q/k}}
\cong
\frac{\Omega^n_{Q_\fm/k}}{df \smsh \Omega^{n-1}_{Q_\fm/k}},
$$ 
located in degree $n := \dm(Q_\fm)$. Choose a regular system of parameters 
$$
x_1, \dots, x_n \in \fm Q_\fm.
$$ 
Then $dx_1, \dots, dx_n$ forms a $Q_\fm$-basis for $\Omega^1_{Q_\fm/k}$, and we write 
$$
\del_1, \dots, \del_n \in \Der_k(Q_\fm) = \Hom_{Q_\fm}(\Omega^1_{Q_\fm/k}, Q_\fm)
$$ 
for the associated dual basis. 
Set $f_i = \del_i(f)$. The sequence $f_1, \dots, f_n$ forms a system of parameters for $Q_\fm$. 
For example, when $Q_\fm = k[x_1, \dots, x_n]_{(x_1, \dots,   x_n)}$, we have $\del_i = \del/\del x_i$, so that $f_i = \del f/\del {x_i}$.

\begin{defn} With the notation of the previous paragraph,  the {\em residue pairing} is the map
$$
\langle -,- \rangle_{\on{res}} : \frac{\Omega^n_{Q/k}}{df \smsh \Omega^{n-1}_{Q/k}} \times \frac{\Omega^n_{Q/k}}{df \smsh \Omega^{n-1}_{Q/k}} \to k
$$
that sends a pair $(gdx_1 \cdots dx_n , h dx_1 \cdots dx_n)$ to  $\Gres \left[\frac{gh dx_1 \cdots dx_n}{f_1, \dots, f_n}\right]$. 
\end{defn}

\begin{prop} \label{prop1129}
The residue pairing coincides with the composition
$$
\begin{aligned}
 \frac{\Omega^n_{Q/k}}{df \smsh \Omega^{n-1}_{Q/k}} \times \frac{\Omega^n_{Q/k}}{df \smsh \Omega^{n-1}_{Q/k}} 
 & = H_n(\Omega^\bu_{Q/k}, -df) \times H_n(\Omega^\bu_{Q/k}, -df) \\
 & \xra{\cong} H_n(\Omega^\bu_{Q_\fm/k}, -df) \times H_n(\Omega^\bu_{Q_\fm/k}, -df) \\
 & \xra{\id \times (-1)^n} H^n(\Omega^\bu_{Q_\fm/k}, -df) \times H^n(\Omega^\bu_{Q_\fm/k}, df) \\ 
 & \xla{\cong} H_n \R\Gamma_{\fm} (\Omega^\bu_{Q_\fm/k}, -df) \times H_n(\Omega^\bu_{Q_\fm/k}, df) \\
 & \xra{\text{K\"unneth}} H_{2n}(\R\Gamma_{\fm}(\Omega^\bu_{Q_\fm/k}, -df) \otimes_{Q_\fm} (\Omega^\bu_{Q_\fm/k}, df)) \\ 
& \xra{\smsh} H_{2n} \R\Gamma_{\fm} (\Omega^\bu_{Q_\fm/k}, 0) \\
& \xra{\on{res}} k. \\
\end{aligned}
$$
In particular, it is well-defined and independent of the choice of regular system of parameters. 
\end{prop}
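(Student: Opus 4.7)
The plan is to compute the stated composition explicitly on basis elements $(g\,dx_1\cdots dx_n,\, h\,dx_1\cdots dx_n)$ and to match the output with the generalized fraction that defines Grothendieck's residue symbol. Since $\fm$ is the only singularity of $f$, the partials $f_1,\dots,f_n$ form a system of parameters of $Q_\fm$, and $\R\G_\fm$ may be modeled on any complex of $Q_\fm$-modules by tensoring with the augmented \v{C}ech complex $\cC(f_1,\dots,f_n)$. The one nontrivial geometric input is the identity $df=\sum_i f_i\,dx_i$, which lets us trade a denominator $1/f_j$ in the \v{C}ech factor for a wedge $dx_j$ in the form factor.

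The main step is to produce an explicit \v{C}ech lift of $g\,dx_1\cdots dx_n$ to a cycle
$$
\widetilde\omega \;=\; \sum_{I\subseteq\{1,\dots,n\}} \epsilon_I\,\frac{g\,\alpha_I}{f_I}\otimes dx_{I^c}
$$
in $\cC(f_1,\dots,f_n)\otimes_{Q_\fm}(\Omega^\bu_{Q_\fm/k},-df)$, where $I^c=\{1,\dots,n\}\setminus I$, $\alpha_I=\prod_{i\in I}\alpha_i$, $f_I=\prod_{i\in I}f_i$, $dx_{I^c}=\bigwedge_{i\in I^c}dx_i$ taken in increasing order, and the signs $\epsilon_I\in\{\pm 1\}$ are fixed inductively so that, for each $j\notin I$, the $d_\cC$-image of the $I$-summand in the $\alpha_{I\cup\{j\}}/f_I\otimes dx_{I^c}$ slot cancels the $(-df)\wedge$-image of the $(I\cup\{j\})$-summand. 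The augmentation $\cC\to Q_\fm$ kills every summand with $|I|\ge 1$, so $\widetilde\omega$ projects to $g\,dx_1\cdots dx_n$, confirming that it represents the correct class in $H_n\R\G_\fm(\Omega^\bu_{Q_\fm/k},-df)$.

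The remainder is a direct computation. After pairing $\widetilde\omega$ with $(-1)^n h\,dx_1\cdots dx_n$ via the K\"unneth map and taking the exterior product, only the top-\v{C}ech summand $I=\{1,\dots,n\}$ of $\widetilde\omega$ contributes, since wedging any other summand with a top form is zero. Unraveling the identification \eqref{E528} between $H^n(\cC\otimes_{Q_\fm}\Omega^n_{Q_\fm/k})$ and $E(f_1,\dots,f_n)\otimes_{Q_\fm}\Omega^n_{Q_\fm/k}$, the output is the generalized fraction $\left[\tfrac{\epsilon_{\{1,\dots,n\}}(-1)^n\,gh\,dx_1\cdots dx_n}{f_1,\dots,f_n}\right]$; applying $\res$ then reproduces the residue pairing exactly, provided $\epsilon_{\{1,\dots,n\}}(-1)^n=1$, which I expect to fall out of the cycle-condition bookkeeping.

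The hard part will be keeping track of the Koszul signs in the $\Z/2$-graded setting: each $\alpha_i$ and each $dx_j$ has odd degree, so every commutation involved in verifying $d(\widetilde\omega)=0$ and in carrying out the exterior product contributes a sign. I expect the factor $(-1)^n$ inserted by the map $\id\times(-1)^n$ in the composition to be precisely what absorbs the aggregate sign $\epsilon_{\{1,\dots,n\}}$, so that the residue pairing emerges canonically and, in particular, independently of the chosen regular system of parameters, thereby simultaneously establishing well-definedness and matching the two constructions.
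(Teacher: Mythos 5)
Your approach is essentially the same as the paper's: construct an explicit \v{C}ech cocycle in $\cC\otimes_{Q_\fm}(\Omega^\bu_{Q_\fm/k},-df)$ lifting $g\,dx_1\cdots dx_n$, wedge it against $(-1)^n h\,dx_1\cdots dx_n$, note that only the top \v{C}ech summand survives the wedge with a top form, and read off the resulting generalized fraction. The one step you leave open --- pinning down the signs $\epsilon_I$ and checking $\epsilon_{\{1,\dots,n\}}(-1)^n=1$ --- is exactly where the paper packages things more efficiently, and you should adopt that trick: write the lift of $dx_1\cdots dx_n$ in product form,
\[
\omega \;=\; \prod_{i=1}^n\Bigl(-\tfrac{1}{f_i}\,\alpha_i + dx_i\Bigr)
\;=\; \frac{(-1)^n}{f_1\cdots f_n}\,\prod_{i=1}^n\bigl(\alpha_i - f_i\,dx_i\bigr).
\]
Since the total differential is left multiplication by $\sum_i(\alpha_i - f_i\,dx_i)$, and each factor $\alpha_i - f_i\,dx_i$ is odd and squares to zero in the graded-commutative algebra $\cC\otimes_{Q_\fm}\Omega^\bu_{Q_\fm/k}$, the cocycle condition is immediate with no inductive bookkeeping. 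Expanding the product at once gives $\epsilon_\emptyset = 1$ (so $\omega$ augments to $dx_1\cdots dx_n$) and $\epsilon_{\{1,\dots,n\}} = (-1)^n$, which is precisely the sign you needed; everything else in your outline goes through as written.
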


\begin{proof}
We need a formula for the inverse of the canonical isomorphism
\begin{equation}
\label{supportiso}
H_n \R \G_\fm(\Omega^\bu_{Q_\fm/k}, -df) \xra{\cong} H_n(\Omega^\bu_{Q_\fm/k}, -df).
\end{equation}
Since the isomorphism is $Q_\fm$-linear, we just need to know where the inverse sends $dx_1 \wedge \cdots \wedge dx_n$. Note that $\cC(x_1, \dots, x_n) \otimes_{Q_\fm}\Omega^\bu_{Q_\fm/k}$ is a graded-commutative $Q_\fm$-algebra (but not a dga), 
and the differential is left multiplication by $\sum_i \a_i -  f_i dx_i$. Observe that the element
\begin{align*}
\omega &:= (-\frac{1}{f_1} \a_1 + dx_1) (-\frac{1}{f_2} \a_2 + dx_2) \cdots (-\frac{1}{f_n} \a_n + dx_n) \\
&= (-1)^n\frac{1}{f_1 \cdots f_n} (\a_1 - f_1 dx_1) (\a_2 - f_2 dx_2) \cdots (\a_n - f_n dx_n) \in   \cC \otimes_{Q_\fm}  (\Omega^\bu_{Q_\fm/k},-df) 
\end{align*}
is a cocycle, and it maps to $dx_1 \wedge \cdots \wedge dx_n \in H_n(\Omega^\bu_{Q_\fm/k}, -df)$ via (\ref{supportiso}). Therefore, the composition
$$
\begin{aligned}
   \frac{\Omega^n_{Q/k}}{df \smsh \Omega^{n-1}_{Q/k}} \times \frac{\Omega^n_{Q/k}}{df \smsh \Omega^{n-1}_{Q/k}} 
  & \xra{\cong} H_n(\Omega_{Q_\fm/k}^\bu, -df) \times H_n(\Omega_{Q_\fm/k}^\bu, -df) \\
  & \xra{\id \times (-1)^n} H_n(\Omega_{Q_\fm/k}^\bu, -df) \times H_n(\Omega_{Q_\fm/k}^\bu, df) \\
& \xla{\cong} H_n(\cC \otimes_{Q_\fm}  (\Omega_{Q_\fm/k}^\bu, -df) )  \times H_n(\Omega_{Q_\fm/k}^\bu, df) \\
& \xra{\text{K\"unneth}}  H_{2n}(\cC \otimes_{Q_\fm}  (\Omega_{Q_\fm/k}^\bu, -df)  \otimes_{Q_\fm} (\Omega_{Q_\fm/k}^\bu, df)) \\
\end{aligned}
$$
sends $(gdx_1 \cdots dx_n , h dx_1 \cdots dx_n)$ to
$$
g \prod_i (-\frac{1}{f_i} \a_i + dx_i) \otimes (-1)^nh dx_1 \wedge \cdots \wedge dx_n.
$$
Under the composition
$$
\begin{aligned}
H_{2n}( \cC \otimes_{Q_\fm} (\Omega_{Q_\fm/k}^\bu, -df)  \otimes_{Q_\fm} (\Omega_{Q_\fm/k}^\bu, df)) 
& \xra{\smsh} H_{2n}(  \cC \otimes_{Q_\fm} (\Omega_{Q_\fm/k}^\bu, 0) )  \\
& \xra{\cong} E \otimes_{Q_\fm} \Omega_{Q_\fm/k}^n   ,
\end{aligned}
$$
this element maps to
$$
\frac{gh}{f_1 \cdots f_n} \otimes dx_1 \wedge \cdots \wedge dx_n,
$$
which is sent to $\Gres \left[\frac{gh dx_1 \cdots dx_n}{f_1, \dots, f_n}\right] \in k$ by the residue map.
\end{proof}

%%%%%%%%%%%%%%%%%%%%%%%%%
\subsection{Relating the trace and residue maps}
%%%%%%%%%%%%%%%%%%%%%%%%%

Our goal in this subsection is to prove the following theorem:

\begin{thm} \label{thm112}
Let $k$ be a field of characteristic $0$, $Q$ an essentially smooth
$k$-algebra, and $\fm$ a $k$-rational maximal ideal of $Q$. Then the diagram 
$$
\xymatrix{  
HH_0(mf^{\fm}(Q_\fm,0))  \ar[dr]_{(-1)^{\frac{n(n+1)}{2}} \trace} \ar[rr]^\e && 
H_{2n} \R \Gamma_\fm(\Omega^\bu_{Q_\fm/k})   \ar[dl]^{\on{res}} \\
  & k \\
}
$$
commutes, where $n = \dm(Q_\fm)$.
\end{thm}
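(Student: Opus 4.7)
The plan is to reduce the theorem, via flat base change followed by K\"unneth multiplicativity, to the one-dimensional case, which I handle by direct computation. The machinery assembled in Lemmas \ref{lem526a}, \ref{lem531}, \ref{lem528c}, \ref{lem531b}, and Proposition \ref{prop112} is tailored precisely for such a reduction.

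First I choose a regular system of parameters $x_1, \dots, x_n$ for $Q_\fm$ and consider the chain of flat local maps
$$A' := \bigotimes_{i=1}^n k[x_i]_{(x_i)} \lra A := k[x_1, \dots, x_n]_{(x_1, \dots, x_n)} \lra Q_\fm,$$
both satisfying the hypotheses of Lemmas \ref{lem526a} and \ref{lem531}. After verifying that $\e$ is also natural under such base changes (a routine consequence of the naturality of the supertrace and the pullback of connections), the theorem reduces to the case $Q = A'$. Writing $A' = A'_1 \otimes_k A''$ with $A'_1 = k[x_1]_{(x_1)}$ and $A''$ of dimension $n-1$, the K\"unneth map $\tstar$ gives an isomorphism
$$HH_*(mf^{\fm_1}(A'_1, 0)) \otimes_k HH_*(mf^{\fm''}(A'', 0)) \xrightarrow{\cong} HH_*(mf^{\fm}(A', 0))$$
(via Lemma \ref{lem529} and the standard K\"unneth formula for Hochschild homology of exterior algebras), decomposing $HH_0(mf^{\fm}(A', 0))$ into an even-tensor-even summand and an odd-tensor-odd summand. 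On the first summand the multiplicativity of $\trace$ (Lemma \ref{lem528c}), $\e$ (Proposition \ref{prop112}), and $\res$ up to sign $(-1)^{1 \cdot (n-1)}$ (Lemma \ref{lem531b}) reduce inductively to the theorem on each tensor factor; the identity $n(n+1)/2 = 1 + (n-1)n/2 + (n-1)$ assembles the global sign $(-1)^{n(n+1)/2}$ correctly. On the second summand, $\trace$ vanishes since it is a degree-zero functional on an odd-degree source, while $\res \circ \e$ vanishes because for $\alpha_1 \in HH_{\odd}(mf^{\fm_1}(A'_1, 0))$ the class $\e(\alpha_1)$ lies in $H^1_{\fm_1}(\Omega^0_{A'_1/k})$ (the only odd-degree component of $\R\Gamma_{\fm_1}(\Omega^\bu_{A'_1/k})$ when $\dim A'_1 = 1$), so its K\"unneth wedge with $\e(\alpha'')$ lives in $H^n_\fm(\Omega^{\leq n-1}_{A'/k})$ and is killed by the projection to the top $\Omega^n$-component defining $\res$.

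For the base case $n = 1$, $Q = k[x]_{(x)}$ and the folded Koszul complex $K = (Q \xrightarrow{x} Q)$ generates the category up to Morita equivalence. By Lemma \ref{lem529} and Proposition \ref{prop1116}, $HH_0(mf^{(x)}(Q, 0)) \cong HH_0(\Lambda_1) = k[y]$ with $y = 1[e^*]$, and $\trace$ is the augmentation $y^m \mapsto \delta_{m,0}$. I choose the connection $\nabla_K$ with $\nabla_K(e) = 0$, so that $(e^*)' = 0$ and $\d_K' = dx \otimes e^*$. An explicit expansion of the HKR formula yields $\e(\id_K[]) = 0$ and $\e(e[]) = -dx$ at the chain level, and the class $\id_K[]$ lifts to the cycle $\eta = 1 \otimes \id_K[] + \frac{1}{x} \otimes e[] \in \cC(x) \otimes HH$ using the Hochschild relation $b(e[]) = d^\cE(e)[] = x \cdot \id_K[]$. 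Applying $1 \otimes \e$ returns $-\frac{dx}{x} \in \cC^1 \otimes \Omega^1$, and $\res$ computes to $-1 = (-1)^{1 \cdot 2/2} \trace(\id_K[])$. For $y^m$ with $m \geq 1$, iterating the Hochschild relations arising from $d^\cE(e) = x \id_K$ and $ee^* + e^*e = \id_K$ produces cycle lifts whose $\e$-images are scalar multiples of $\frac{dx}{x^{m+1}}$, whose residue vanishes; both $\trace(y^m)$ and $\res \circ \e(y^m)$ are therefore $0$, completing the base case.

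The main obstacle is the base-case computation, particularly constructing the correct cycle lifts in the \v Cech model $\cC \otimes HH$ and tracking signs through the HKR formula (the sign of $\e(e[])$ is delicate). Once the base case is established, the global sign $(-1)^{n(n+1)/2}$ emerges automatically from the base-case sign $-1$ combined with the $(-1)^{mn}$ K\"unneth signs contributed by the residue pairing in Lemma \ref{lem531b}.
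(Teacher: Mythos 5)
Your proposal takes essentially the same route as the paper: reduce via flat base change (Lemmas \ref{lem526a} and \ref{lem531}, plus naturality of $\e$) to a polynomial ring, then use the multiplicativity lemmas (\ref{lem528c}, \ref{lem531b}, Proposition \ref{prop112}) to K\"unneth-reduce to the one-variable case $Q = k[x]$, $\fm = (x)$, and finish with an explicit computation of the cycle lift and its image under $\R\Gamma_\fm \e'$. Your base case --- the lift $\eta = \id_K[] + \frac{\alpha}{x}\, e[]$, the relation $b(e[]) = x\,\id_K[]$, the connection $\nabla_K(e)=0$, the chain-level values $\e(\id_K[]) = 0$ and $\e(e[]) = -dx$, and the conclusion $\res\circ\e(\id_K[]) = -1$ --- matches the paper's computation (their $d_K' = -e^*\,dx$ is your $dx\otimes e^*$ after swapping the odd tensor factors), and the sign bookkeeping $n(n+1)/2 \equiv 1 + (n-1)n/2 + (n-1) \pmod 2$ is correct.

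There is one place where your proposal is actually more careful than the paper. In the K\"unneth step, the degree-zero piece $HH_0(mf^{\fm}(A',0))$ decomposes as $\bigl(HH_0 \otimes HH_0\bigr) \oplus \bigl(HH_1 \otimes HH_1\bigr)$ of the two tensor factors, so the map on degree-zero pieces is \emph{not} surjective for $n\geq 2$ (contrary to the assertion in Lemma \ref{lem5242} that $HH'\otimes_k HH'' \to HH$ is an isomorphism when $HH'$, $HH''$, $HH$ are all restricted to degree zero). You explicitly handle the odd$\otimes$odd summand by showing that $\trace$ kills it (via multiplicativity, since $\trace$ vanishes on $HH_1$ of each factor) and that $\res\circ\e$ kills it (since $\e$ of an odd class lives in $\Omega^{j}$ for $j<\dim$, so the wedge misses the top form degree). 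This observation is the correct repair of the surjectivity gap, and the rest of your diagram chase then goes through exactly as in the paper. So the two proofs agree in substance; yours is slightly more careful at this one point.
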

Our strategy for proving this theorem is to reduce it to the very special case when $Q = k[x]$ and $\fm = (x)$ and then to prove it in that
case via an explicit calculation.

  \begin{lem} \label{lem524}
    Given a pair $(Q,\fm)$ and $(Q',\fm')$ satisfying the hypotheses of Theorem \ref{thm112},
    suppose there is a $k$-algebra map
    $g: Q \to Q'$  such that $g^{-1}(\fm') = \fm$, the induced map $Q_{\fm} \to Q'_{\fm'}$ is flat, and $\fm Q'_{\fm'} = \fm' Q'_{\fm'}$.
Then    
\begin{enumerate}
\item Theorem \ref{thm112} holds for $(Q, \fm)$ if and only if it holds for $(Q', \fm')$. 
\item Theorem \ref{thm112} holds provided it holds in the special case
    where $Q = k[t_1, \dots, t_n]$ and $\fm = (t_1, \dots, t_n)$. 
    \end{enumerate}
     \end{lem}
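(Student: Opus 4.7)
The plan is to deduce part (1) from three compatibilities already proved in the paper, and then derive part (2) by applying (1) to the map induced by a choice of regular system of parameters.

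For (1), the key is to assemble the diagram
$$
\xymatrix{
HH_0(mf^\fm(Q_\fm, 0)) \ar[rr]^-{\e} \ar[dd]_-{g_*}^-\simeq \ar[dr]|-{(-1)^{n(n+1)/2}\trace} && H_{2n}\R\Gamma_\fm(\Omega^\bu_{Q_\fm/k}) \ar[dl]|-{\on{res}} \ar[dd]^-{g_*}_-\simeq \\
 & k & \\
HH_0(mf^{\fm'}(Q'_{\fm'}, 0)) \ar[rr]^-{\e} \ar[ur]|-{(-1)^{n(n+1)/2}\trace} && H_{2n}\R\Gamma_{\fm'}(\Omega^\bu_{Q'_{\fm'}/k}) \ar[ul]|-{\on{res}}
}
$$
Lemma \ref{lem526a} tells us the left vertical map is an isomorphism and makes the left trace triangle commute; Lemma \ref{lem531} does the same on the residue side. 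For the outer square, naturality of $\e$ under the flat base change $g$ may be checked by picking connections on objects of $mf^\fm(Q_\fm, 0)$ and using their extensions along $Q_\fm \to Q'_{\fm'}$ on the images, then comparing the cochain-level formula for $\e_{Q,0}$ from Subsection \ref{nosupport}. Using also that $g_*$ identifies $\R\Gamma_\fm$ with $\R\Gamma_{\fm'}$ (since $\fm Q'_{\fm'} = \fm' Q'_{\fm'}$), the outer square commutes in the derived category. Because the vertical maps are isomorphisms, the front triangle commutes iff the back does, which is (1).

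For (2), given $(Q, \fm)$ with $n = \dim(Q_\fm)$, choose elements $x_1, \dots, x_n \in \fm$ whose images form a regular system of parameters for $Q_\fm$; this is possible because the canonical map $\fm/\fm^2 \to \fm Q_\fm/(\fm Q_\fm)^2$ is an isomorphism of $n$-dimensional $k$-vector spaces. Define $g: k[t_1, \dots, t_n] \to Q$ by $t_i \mapsto x_i$ and set $\fn = (t_1, \dots, t_n)$. Then $g^{-1}(\fm) = \fn$ since the identification $Q/\fm \cong k$ sends each $g(t_i)$ to $0$, and $\fn \cdot Q_\fm = \fm Q_\fm$ by construction. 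The induced local map $k[t_1, \dots, t_n]_\fn \to Q_\fm$ is a homomorphism between regular local rings of the same dimension $n$ with zero-dimensional fiber $Q_\fm/\fm Q_\fm = k$, hence flat by miracle flatness. Invoking (1) with this $g$ completes the reduction.

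The main obstacle will be verifying the naturality of the HKR map in the proof of (1). Conceptually this is not surprising, since $\e$ is built from the de Rham differential and the supertrace, both of which behave well under flat base change; but a complete verification requires chasing through the three-stage factorization (\ref{bigdiagram}) of $\e$ and confirming that each stage --- the morphism $(F, \beta)_*$, the classical HKR component $\e^Q$, and the compatibility of the chosen connections --- intertwines with $g$.
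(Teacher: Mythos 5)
Your proof is correct, and part (1) matches the paper's argument exactly: combine Lemmas \ref{lem526a} and \ref{lem531} with naturality of the HKR map $\e$ (the paper also just cites naturality without spelling out the chase through diagram (\ref{bigdiagram}), so your sketch of that chase is at the same level of rigor). For part (2), you take a slight variant of the paper's route. The paper first applies (1) to the localization map $g: Q \to Q_\fm$ to reduce to the local case, and only then maps from $k[t_1,\dots,t_n]$; after localizing, the regular system of parameters automatically lies in $Q = Q_\fm$, so no lifting is needed. You instead construct $g: k[t_1,\dots,t_n] \to Q$ in one step by lifting a regular system of parameters from $\fm Q_\fm$ to $\fm$, which is possible because $\fm/\fm^2$ is already a $Q/\fm = k$-module and hence localizing at $\fm$ gives an isomorphism $\fm/\fm^2 \cong \fm Q_\fm/(\fm Q_\fm)^2$. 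Your miracle-flatness verification is also correct. The tradeoff is cosmetic: the paper's version invokes (1) twice but avoids the lifting observation; yours invokes (1) once at the cost of a few extra lines of justification. Both are sound.
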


     \begin{proof} (1) follows from Lemmas \ref{lem526a} and \ref{lem531} and the naturality of the HKR map $\e$. As for (2), for $(Q, \fm)$ as in Theorem \ref{thm112}, applying (1) to the map $g: Q \to Q_\fm$ allows us to reduce
       to the case when $Q$ is local.
       In this case, let $x_1, \dots, x_n$
       be a regular system of parameters  
for $Q$, define $g: k[t_1, \dots, t_n] \to Q$
       to be the $k$-algebra map sending $t_i$ to $x_i$, and apply (1) to $g$.     \end{proof}

     \begin{lem} \label{lem5242}
       Suppose $Q',Q''$ are essentially smooth $k$-algebras, and $\fm' \subseteq Q',\fm'' \subseteq Q''$ are $k$-rational maximal ideals.
       Let $Q = Q' \otimes_k Q''$ and $\fm = \fm' \otimes_k Q'' + Q' \otimes_k \fm''$.
       If Theorem \ref{thm112} holds for each of $(Q',\fm')$ and $(Q'',\fm'')$, then it also holds for $(Q, \fm$). In particular, the Theorem holds in general if
       it holds for the special case $Q = k[x], \fm = (x)$.
         \end{lem}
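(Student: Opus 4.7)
The plan is to decompose classes in $HH_0(mf^\fm(Q_\fm,0))$ as K\"unneth products $\alpha' \tstar \alpha''$ and to combine the multiplicativity results of Lemma \ref{lem528c} (trace), Proposition \ref{prop112} (HKR map), and Lemma \ref{lem531b} (residue) with the hypothesis applied to the two factors. First, I would fix regular systems of parameters for $Q'_{\fm'}$ and $Q''_{\fm''}$, concatenate them to obtain one for $Q_\fm$, and form the associated exterior algebras $\Lambda'$, $\Lambda''$, and $\Lambda \cong \Lambda' \otimes_k \Lambda''$ as in Subsection \ref{calculation}. Using the shuffle-product description \eqref{HHexterior}, the K\"unneth map $HH_*(\Lambda') \otimes_k HH_*(\Lambda'') \to HH_*(\Lambda)$ is an isomorphism; combined with the Morita equivalences of Lemma \ref{lem529} and the naturality of $\tstar$ (Lemma \ref{key}), it follows that every class in $HH_0(mf^\fm(Q_\fm,0))$ is, up to these identifications, a finite sum of products $\alpha' \tstar \alpha''$.

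Set $m = \dm(Q'_{\fm'})$ and $n = \dm(Q''_{\fm''})$, so $\dm(Q_\fm) = m+n$. The K\"unneth decomposition of $HH_0(\Lambda)$ splits as $(HH_0(\Lambda') \otimes_k HH_0(\Lambda'')) \oplus (HH_1(\Lambda') \otimes_k HH_1(\Lambda''))$. For the diagonal summand, I would chain Proposition \ref{prop112}, Lemma \ref{lem531b}, the hypothesis applied to each factor, and Lemma \ref{lem528c} to obtain
$$
\res(\e(\alpha' \tstar \alpha'')) \;=\; (-1)^{mn + \frac{m(m+1)}{2} + \frac{n(n+1)}{2}} \trace(\alpha' \tstar \alpha'') \;=\; (-1)^{\frac{(m+n)(m+n+1)}{2}} \trace(\alpha' \tstar \alpha''),
$$
where the sign identity $mn + \frac{m(m+1)}{2} + \frac{n(n+1)}{2} = \frac{(m+n)(m+n+1)}{2}$ holds as an equality of integers. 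For the off-diagonal summand ($\alpha', \alpha''$ both in $HH_1$), both sides vanish: the right side because $\trace$ takes values in $k$ concentrated in even $\Z/2$-degree and hence is zero on $HH_1$, so that Lemma \ref{lem528c} gives $\trace(\alpha' \tstar \alpha'') = 0$; and the left side because $\e$ preserves $\Z/2$-grading, so $\e(\alpha')$ is an odd class in $\R\Gamma_{\fm'}(\Omega^\bu_{Q'_{\fm'}/k})$ whose component in the even-degree subspace $H^m_{\fm'}(\Omega^m_{Q'_{\fm'}/k})$ (through which $\res_{Q',\fm'}$ factors) must vanish, and because in $\e(\alpha') \smsh \e(\alpha'') = \e(\alpha' \tstar \alpha'')$ the only contribution to the top-form piece $H^{m+n}_\fm(\Omega^{m+n}_{Q_\fm/k})$ is the wedge of these vanishing components (since $\Omega^j_{Q'_{\fm'}/k} = 0$ for $j > m$).

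For the ``in particular'' claim, I would combine Lemma \ref{lem524}(2), which reduces Theorem \ref{thm112} to the special case $Q = k[t_1, \ldots, t_n]$, $\fm = (t_1, \ldots, t_n)$, with induction on $n$: the decomposition $k[t_1, \ldots, t_n] = k[t_1] \otimes_k k[t_2, \ldots, t_n]$ together with the main conclusion above reduces this inductively to the base case $Q = k[x]$, $\fm = (x)$ (with the zero-dimensional case $Q = k$ handled trivially). The chief obstacle in the plan is the off-diagonal K\"unneth summand, for which the inductive hypothesis provides no direct information; this is overcome by the parity observation just described, which forces both sides of the equation to vanish simultaneously.
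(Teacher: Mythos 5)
Your proof is correct and follows the same basic strategy as the paper's: chain together the multiplicativity of the trace (Lemma \ref{lem528c}), of the residue map (Lemma \ref{lem531b}), and of the HKR map (Proposition \ref{prop112}) with the hypothesis applied to the two factors, via the K\"unneth identification coming from Lemma \ref{lem529} and \eqref{HHexterior}, and finish the ``in particular'' claim with Lemma \ref{lem524}(2) and induction on the number of variables. The one place you are more careful than the paper is a genuine improvement. The paper asserts that $\tstar$ gives an \emph{isomorphism} $HH_0(mf^{\fm'}(Q'_{\fm'},0)) \otimes_k HH_0(mf^{\fm''}(Q''_{\fm''},0)) \to HH_0(mf^{\fm}(Q_\fm,0))$ and concludes by a diagram chase; but as you observe, the even part of the K\"unneth isomorphism $HH_*(\Lambda') \otimes_k HH_*(\Lambda'') \cong HH_*(\Lambda)$ is $\bigl(HH_0(\Lambda')\otimes_k HH_0(\Lambda'')\bigr) \oplus \bigl(HH_1(\Lambda')\otimes_k HH_1(\Lambda'')\bigr)$, so the map is injective but not surjective, and the inductive hypothesis (a statement about $HH_0$) gives no direct information about the odd--odd summand. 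Your parity argument for that summand — $\trace$ vanishes on odd classes because it is even-degree with values in $k$, and $\res\circ\e$ vanishes because only $\Omega^m_{Q'_{\fm'}/k}\smsh\Omega^n_{Q''_{\fm''}/k}$ can contribute to the top-form piece through which $\res$ factors, and those graded components of $\e(\alpha'),\e(\alpha'')$ sit in the wrong $\Z/2$-parity — is exactly what is needed to complete the diagram chase, and it fills in a step the paper's argument leaves implicit. The sign bookkeeping $mn + \tfrac{m(m+1)}{2} + \tfrac{n(n+1)}{2} = \tfrac{(m+n)(m+n+1)}{2}$ matches the paper.
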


         \begin{proof}For brevity, let $HH' = HH_0(mf^{\fm'}(Q'_{\fm'},0))$, $HH'' = HH_0(mf^{\fm''}(Q''_{\fm''},0))$, and $HH_0 = HH_0(mf^{\fm}(Q_\fm,0))$,
and similarly $\R\G' = H_{\dm(Q'_{\fm'})} \R\G_{\fm'}(\Omega^\bu_{Q'_{\fm'}/k})$, etc.
We consider the diagram           
           $$
           \xymatrix{
k \otimes_k k \ar[ddd]^-\cong \ar[rrr]^-= & & & k \otimes_k k\ar[ddd]^-\cong \\
& HH' \otimes_k HH''  \ar[d]^-\tstar  \ar[lu] \ar[r]^-{\e' \otimes \e''} & \R\G' \otimes_k \R\G'' \ar[ru] \ar[d]^-{\smsh}& \\
& HH \ar[r]^-\e \ar[ld]  & \R\G \ar[dr] & \\
k \ar[rrr]^-= &&& k,
}
           $$
           where the diagonal maps are the appropriate trace or residue maps.
           The left and right trapezoids commute by Lemmas \ref{lem528c} and \ref{lem531b},
           the middle square commutes by Proposition \ref{prop112}, the top trapezoid commutes by assumption, and the outer square obviously commutes. It
           follows from (\ref{HHexterior}) and Lemma \ref{lem529} that           $HH' \otimes_k HH'' \xra{\tstar} HH$ is an isomorphism.
A diagram chase now shows that the bottom trapezoid commutes, which gives the first assertion.
The second assertion is an immediate consequence of the first assertion and Lemma \ref{lem524}.
           %%%%
           \end{proof}

\begin{proof}[Proof of Theorem \ref{thm112}] By Lemma \ref{lem5242}, we need only show
 $$
 \res \circ \e = - \trace
 $$ 
 in the case where $Q = k[x]$ and $\fm = (x)$. Let $K$ be the Koszul complex on $x$, considered as a differential $\Z/2$-graded algebra, as in Section \ref{calculation}, and let $\cE = \End_{mf^{(x)}(k[x]_{(x)}, 0)}(K_{(x)})$. Recall from Section \ref{calculation} that $\cE$ is the differential $\Z/2$-graded $Q$-algebra generated by odd degree elements $e, e^*$ satisfying the relations $e^2 = 0 = (e^*)^2$ and $[e, e^*] = 1$, and the differential $d^\cE$ is given by $d^\cE(e) = x$ and $d^\cE(e^*) = 0$. By Lemma \ref{lem529}, we have an isomorphism
$$
k[y] \xra{\cong} HH_0(mf^{(x)}(k[x]_{(x)}, 0)),
$$
where
$$
y \mapsto \id_K [e^*] \in HH(\cE) \subseteq HH(mf^{(x)}(k[x]_{(x)},0)),
$$
and, more generally,
$$
y^j \mapsto j! \id_K[\overbrace{e^*| \cdots | e^*}^j], \text{  for $j \geq 0$.}
$$
As usual, we identify $H_2 \R \Gamma_{(x)}(\Omega^\bu_{k[x]_{(x)}/k})$ with $  \frac{k[x]_{(x)}[x^{-1}]}{k[x]_{(x)}}  \cdot  \a  \otimes_{k[x]_{(x)}} \Omega^1_{k[x]_{(x)}/k}$, where $|\a| = 1$. Theorem \ref{thm112} follows from the calculations
\begin{enumerate}
\item $\res( \frac{\a}{x} \otimes dx ) = 1$,
\item $\res( \frac{\a}{x^i} \otimes dx) = 0$ for all $i > 1$,
\item $\trace(y^0) = 1$, 
\item $\trace(y^j) = 0$ for all $j \geq 1$, and
\item $\e(y^j) = - j! ( \frac{\a}{x^{j+1}} \otimes dx )$ for all $j \geq 0$.
\end{enumerate}
In fact, (1) and (2) follow from the definition of the residue map, and (3) and (4) follow from Propositions \ref{(1)} and \ref{prop1116}, so it remains only to establish (5).

Recall that the map $\e$ is induced by the diagram
\begin{equation} \label{E531b}
  k[y] \xra{\cong}
  HH_0(\cE) \xla{\cong} 
H_2 \R\Gamma_{(x)} HH(\cE) \xra{\R \G_{(x)} \e'} 
H_2 \R\Gamma_{(x)}(\Omega^\bu_{k[x]_{(x)}/k}),
\end{equation}
where $\e'$ denotes the composition
$$
HH(\cE) \xra{(\id, d_K)_*}  HH^{II}(\cE^0) \xra{\e^0} 
\Omega^\bu_{k[x]_{(x)}/k}.
$$
Here, $\cE^0$ is the same as $\cE$, but with trivial differential, $(\id, d_K)$ is a morphism $\cE \to \cE^0$ of curved dga's (with trivial curvature), and $\e^0$ is as defined in \ref{nosupport}.

We need to calculate the inverse of the isomorphism
$H_2 \R\Gamma_{(x)} HH(\cE) \xra{\cong}  HH_0(\cE)$
occuring in \eqref{E531b}.
As usual, we make the identification
$$
\R\Gamma_{(x)} HH(\cE) = HH(\cE) \oplus  HH(\cE)[1/x] \cdot \a.
$$
The differential on the right is $\del := b + \a$, where $\a$ denotes left multiplication by $\a$; note that $\a^2 = 0$. So, for a class $\gamma + \gamma'\a$, we have
$$
\del(\gamma + \a \gamma') = b(\gamma) - b(\gamma')\a + \gamma \a.
$$
With this notation, the quasi-isomorphism $\R\Gamma_{(x)} HH(\cE) \xra{\simeq} HH(\cE)$ is given by setting $\a= 0$. 

For $j \geq 0$, we define 
$$
y^{(j)}  = \frac{1}{j!} y^j = \id_K [\overbrace{e^* | e^* |   \cdots | e^*}^{\text{$j$ terms}}]
$$
and
$$
\omega_j  =  e[\overbrace{e^* | e^* |   \cdots | e^*}^{\text{$j$ terms}}] 
\in  HH(\cE)[1/x].
$$
Then, for $j \geq 0$, we have 
$$
b(\omega_j)  = x y^{(j)} - y^{(j-1)},
$$
where $y^{(-1)} : = 0$, from which we get 
$$
b\left( \frac{1}{x} \omega_j + \frac{1}{x^2} \omega_{j-1} + \cdots + \frac{1}{x^{j+1}} \omega_0\right)  = y^{(j)}.
$$
It follows that, for each $j \geq 0$, the class
$$
y^{(j)} + \alpha \left(\frac{1}{x} \omega_j + \frac{1}{x^2} \omega_{j-1} + \cdots \frac{1}{x^{j+1}} \omega_0\right)
$$
is a cycle in $\R\Gamma_{(x)} HH(\cE)$ that maps to $y^{(j)} \in HH(\cE)$ under the canonical map $\R\Gamma_{(x)}(HH(\cE)) \to HH(\cE)$. We conclude that the inverse of
$$
H_2 \R\Gamma_{(x)}HH(\cE) \xra{\cong} HH_0(\cE) = k[y]
$$
maps $y^j$ to the class of 
$$
\eta_j := y^{j} + j! \alpha \left(\frac{1}{x} \omega_j + \frac{1}{x^2} \omega_{j-1} + \cdots + \frac{1}{x^{j+1}} \omega_0\right)
$$
for each $j  \geq 0$, and hence
$$
\e(y^j) = \R\Gamma_{(x)}\e'(\eta_j).
$$

Recall that $\e'$ sends $\t_0[\t_1| \cdots |\t_n] \in HH(\cE)$ to
$$
\sum (-1)^{j_0+ \cdots +j_n} \frac{1}{(n + J)!} \str(\t_0 (d_K')^{j_0} \t_1' \cdots \t_n' (d_K')^{j_n} ),
$$
where the derivatives are computed relative to any specified flat connection on $K$.
Using the Levi-Civita connection associated to the basis
$\{1, e\}$ of $K$, we get $e' = 0$, $(e^*)'= 0$ and hence $d_K' = - e^* dx$. It follows that
$$
\begin{aligned}
  \e'(\omega_j) & = 0 \text{ for $j \geq 1$,} \\
  \e'(\omega_0) & = \str(e) + \str(e e^* dx), \\
  \e'(y^{(j)}) & = 0 \text{ for $j \geq 1$, and } \\
  \e'(y^{(0)}) & = \str(\id_K) + \str(e^* dx). \\
  \end{aligned}
$$
It is easy to see that $\str(e e^*) = -1$, $\str(e^*) = 0$, $\str(e) = 0$, and $\str(\id_K) = 0$, so that 
$\e'(\omega_0) = - dx$, $\e'(\omega_j) = 0$ for all $j \geq 0$, and $\e'(y^j) = 0$ for all $j$. We obtain
$$
\e(y^j) = \R\Gamma_{(x)}\e' (\eta_j) = - j! (\frac{\a}{x^{j+1}} \otimes dx )
$$
for all $j \geq 0$, as needed.
\end{proof}

%%%%%%%%%%%%%%%%%%%%%%%%
\subsection{Proof of the conjecture}
%%%%%%%%%%%%%%%%%%%%%%%%

%%%%%%%%%%%%%%%%%%%%%%%%

Let $Q = \C[x_1, \dots, x_n]$ and $f \in \fm = (x_1, \dots, x_n) \subseteq Q$, and assume $\fm$ is the only singular point of the morphism $f: \Spec(Q) \to \A^1$. As discussed in the introduction, a result of Shklyarov (\cite[Corollary 2]{Shklyarov:higherresidues}) states that there is a commutative diagram

\begin{equation} \label{E1119d}
\xymatrix{
HH_n(mf(Q,f))^{\times 2} \ar[rr]^-{I_f(0) \times I_f(0)}_-\cong \ar[dr]_-{c_{f} \eta_{mf}} &&  
(\frac{\Omega^n_{Q/k}}{ df \smsh \Omega^{n-1}_{Q/k}})^{\times 2} 
\ar[dl]^-{\langle -,- \rangle_{\on{res}}}\\
& \C, \\
}
\end{equation}
for some constant $c_f$ which possibly depends on $f$. 

\begin{thm} \label{mainthm2}
  Let $k$ be a field of characteristic $0$, $Q$ an essentially smooth $k$-algebra, $\fm$ a $k$-rational maximal ideal, and $f$ an element of
  $\fm$ such that $\fm$ is the only singularity of the morphism $f: \Spec(Q) \to \A^1_k$. Then the  diagram
$$
\xymatrix{
HH_n(mf(Q,f))^{\times 2} \ar[rr]^-{\e \times \e}_-\cong \ar[dr]_-{(-1)^{n(n+1)/2} \eta_{mf}} &&  
(\frac{\Omega^n_{Q/k}}{ df \smsh \Omega^{n-1}_{Q/k}})^{\times 2}
\ar[dl]^-{\langle -,- \rangle_{\on{res}}}\\
& k \\
}
$$
commutes.
\end{thm}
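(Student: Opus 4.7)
The plan is to assemble the commutative diagram \eqref{outline} from the introduction, which has already been broken into three pieces: a top square, a middle square, and a bottom triangle, together with identifications of the two outer compositions. By Lemma \ref{If}, $I_f(0) = \e_{Q,f}$, so the horizontal maps across the first two rows are just HKR maps, and the commutativity of the top square reduces to Lemma \ref{lem1129}, noting that the map $\gamma$ of that lemma acts on $H_n(\Omega^\bu_{Q/k}, df)$ as $(-1)^n$ since all homology is concentrated in degree $n$. The commutativity of the middle square is precisely Corollary \ref{cor1129}, specialized to $Z = W = \{\fm\}$ (valid since $\Sing(f) = \{\fm\}$ implies every object of $mf(Q,f)$ has support contained in $\{\fm\}$). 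The commutativity of the bottom triangle, which contains the sign $(-1)^{n(n+1)/2}$, is exactly Theorem \ref{thm112}, specialized to the complex $\Omega^\bu_{Q_\fm/k}$ with trivial curving.

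Next I would identify the two outer compositions. By Lemma \ref{lem1119t}, the composition down the left side of \eqref{outline} is precisely the canonical pairing $\eta_{mf}$ (times the sign $(-1)^{n(n+1)/2}$ inherited from the bottom triangle). By Proposition \ref{prop1129}, the composition down the right side of \eqref{outline} is precisely the residue pairing $\langle -, -\rangle_{\on{res}}$: the first two arrows on the right give the isomorphism $H_n(\Omega^\bu_{Q/k}, -df) \xrightarrow{\cong} H_n(\Omega^\bu_{Q_\fm/k}, -df)$ and the sign twist $(-1)^n$, while the bottom composition (K\"unneth, wedge, residue) is literally the formula in Proposition \ref{prop1129}.

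Combining these observations, the outer square/triangle of \eqref{outline} commutes, which expresses exactly
\[
\langle \e(\alpha), \e(\alpha') \rangle_{\on{res}} = (-1)^{n(n+1)/2}\, \eta_{mf}(\alpha, \alpha')
\]
for all $\alpha, \alpha' \in HH_n(mf(Q,f))$. This is the statement of Theorem \ref{mainthm2}. To deduce Shklyarov's conjecture (Theorem \ref{introthm1}) in the case $k = \C$, $Q = \C[x_1,\dots,x_n]$, $\fm = (x_1,\dots,x_n)$, one compares with the previously established diagram \eqref{E1119d}: since the residue pairing is non-degenerate and $I_f(0) = \e$ is an isomorphism, the constant $c_f$ is uniquely determined, and must equal $(-1)^{n(n+1)/2}$.

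The main obstacle — and the reason the proof required the machinery built up in the earlier sections — is the bottom triangle (Theorem \ref{thm112}). All the other pieces are either formal (naturality of HKR, definition of the canonical pairing) or had already been essentially set up; but matching the trace map on $HH_*(mf^\fm(Q_\fm,0))$ with Grothendieck's residue up to the precise sign $(-1)^{n(n+1)/2}$ required the reduction to the one-variable case $Q = k[x]$ via Lemmas \ref{lem524} and \ref{lem5242} (using the multiplicativity properties established in Lemmas \ref{lem528c} and \ref{lem531b}), followed by the explicit Koszul computation in $mf^{(x)}(k[x]_{(x)}, 0)$. Once Theorem \ref{thm112} is in hand, the present theorem follows by pure diagram chasing.
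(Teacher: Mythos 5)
Your proof proposal is correct and follows essentially the same route as the paper: assemble diagram \eqref{outline} (= the paper's diagram \eqref{proofdia}) from the top square (Lemma \ref{lem1129}), middle square (Corollary \ref{cor1129}), and bottom triangle (Theorem \ref{thm112}), then identify the left composite with $(-1)^{n(n+1)/2}\eta_{mf}$ via Lemma \ref{lem1119t} and the right composite with $\langle -,-\rangle_{\on{res}}$ via Proposition \ref{prop1129}. The only superfluous ingredient in your write-up is the invocation of Lemma \ref{If} for the top square — Theorem \ref{mainthm2} is stated directly in terms of $\e$, so $I_f(0)=\e$ is needed only later when deducing Shklyarov's conjecture from this theorem.
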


\begin{proof}
  Consider the diagram 
   \begin{equation}
   \label{proofdia}
{
\xymatrix{
HH_n(mf(Q,f)) \times HH_n(mf(Q,f))  \ar[d]^-{\id \times \Psi} \ar[rr]^-{\e \times \e} && H_n(\Omega_Q^\bu, -df) \times H_n(\Omega_Q^\bu, -df)
\ar[d]^-{\id \times (-1)^n}\\
HH_n(mf(Q,f)) \times HH_n(mf(Q,-f)) \ar[rr]^-{\e \times \e} 
\ar[d]^-\star  && H_n(\Omega_Q^\bu, -df) \times H_n(\Omega_Q^\bu, df)
\ar[d]^-{\smsh} \\
HH_{2n}(mf^\fm(Q_\fm,0))  \ar[dr]_-{(-1)^{n(n+1)/2} \trace} \ar[rr]^-\e && H_{2n} \R\Gamma_\fm( \Omega_{Q_\fm}^\bu) \ar[dl]^-{\on{res}} \\
& k. \\
}
}
\end{equation}
The top square commutes by Lemma \ref{lem1129}, the square in the middle  commutes by Corollary \ref{cor1129},
and the triangle at the bottom commutes by Theorem \ref{thm112}.
By Lemma \ref{lem1119t},
the map
$$
HH_n(mf(Q,f)) \times HH_n(mf(Q,f)) \to k
$$
obtained by composing the maps along the left edge of (\ref{proofdia})
is $(-1)^{n(n+1)/2} \eta_{mf}$.
By Proposition \ref{prop1129}, the map
$$
\left(\frac{\Omega^n_{Q/k}}{ df \smsh \Omega^{n-1}_{Q/k}} \right)^{\times 2} = H_n(\Omega_Q^\bu, -df)^{\times 2}  \to k
$$
obtained by composing the maps along the right edge of (\ref{proofdia}) is $\langle -,- \rangle_{\on{res}}$.
\end{proof}

\begin{cor} Conjecture \ref{conj:s} holds. That is, for $f \in \fm = (x_1, \dots, x_n) \subseteq Q = \C[x_1, \dots, x_n]$ such that $\fm$ is the only singularity of the morphism $f: \Spec(Q) \to \A^1_k$,
the unique constant $c_f$ that makes diagram (\ref{E122b})
commute is $(-1)^{n(n+1)/2}$, as predicted by Shklyarov.
\end{cor}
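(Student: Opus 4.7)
The plan is to derive the corollary directly by comparing Theorem \ref{mainthm2} with Shklyarov's diagram \eqref{E1119d}. Shklyarov's result guarantees the existence of a constant $c_f$ making \eqref{E1119d} commute, and Theorem \ref{mainthm2} exhibits $(-1)^{n(n+1)/2}$ as one such constant; uniqueness of $c_f$ then forces the two to coincide.

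More concretely, I would first invoke Lemma \ref{If}, which identifies Shklyarov's isomorphism $I_f(0)$ with the HKR map $\e = \e_{Q,f}$ built in Section \ref{sec:HKR}. Under this identification, the diagram of Theorem \ref{mainthm2} literally becomes the diagram \eqref{E1119d} with the specific choice $c_f = (-1)^{n(n+1)/2}$. So after substituting $\e \times \e$ for $I_f(0) \times I_f(0)$, Theorem \ref{mainthm2} asserts that the constant $(-1)^{n(n+1)/2}$ works in \eqref{E1119d}.

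It remains only to verify uniqueness of $c_f$. Since $\e = I_f(0)$ is an isomorphism (a consequence of the HKR-type quasi-isomorphism \eqref{HKRif} after setting $u = 0$) and since the residue pairing $\langle -, - \rangle_{\on{res}}$ is non-degenerate on $\Omega^n_{Q/k}/ df \wedge \Omega^{n-1}_{Q/k}$ (a classical fact in the case $\Sing(f) = \{\fm\}$, where this $k$-vector space is the Milnor algebra of $f$ up to twist by the top form), any two constants that make \eqref{E1119d} commute must agree. Comparing with Shklyarov's theorem then gives $c_f = (-1)^{n(n+1)/2}$, which is Conjecture \ref{conj:s}.

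There is essentially no obstacle here: this is a bookkeeping step tying together the already-established commutativity (Theorem \ref{mainthm2}), the identification of maps (Lemma \ref{If}), and the standard non-degeneracy of the residue pairing. The entire content of the corollary has been absorbed into Theorem \ref{mainthm2}; the corollary simply records the value of Shklyarov's constant in his original formulation.
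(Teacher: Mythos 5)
Your argument is exactly the paper's: identify $I_f(0)$ with $\e$ via Lemma \ref{If}, read Theorem \ref{mainthm2} as exhibiting $(-1)^{n(n+1)/2}$ as a valid constant in \eqref{E1119d}, and conclude by the uniqueness of $c_f$ (which the paper justifies, equivalently, by noting $I_f(0)$ is an isomorphism and the residue pairing is non-zero). The only cosmetic difference is that the paper spells out that the constant in \eqref{E1119d} coincides with the one in \eqref{E122b}, a point your appeal to Shklyarov's Corollary 2 already covers implicitly.
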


\begin{proof} Under these assumptions, $\e = I_f(0)$ by Lemma \ref{If}.  Theorem \ref{mainthm2} thus implies that 
the value $c_f = (-1)^{n(n+1)/2}$ causes the diagram \eqref{E1119d} to commute. As discussed in the introduction, this uniquely determines the value of $c_f$, and the unique constant $c_f$ which makes diagram \eqref{E1119d} commute is the same as that which makes diagram (\ref{E122b}) commute.
\end{proof}

%%%%%%%%%%%%%%%%%%%%%%%%%%
\section{Recovering Polishchuk-Vaintrob's Hirzebruch-Riemann-Roch formula for matrix factorizations}
\label{PV}
%%%%%%%%%%%%%%%%%%%%%%%%%%
Assume $k$, $Q$, $\fm$, and $f$ are as in the statement of Theorem \ref{mainthm2}. We recall that, given objects $X, Y \in mf(Q, f)$, the \emph{Euler pairing} applied to the pair $(X, Y)$ is given by
$$
\chi(X, Y)  =  \dim_k H_0 \Hom(X, Y) - \dim_k H_1 \Hom(X, Y).
$$
In this final section, we give a new proof of a theorem due to Polishchuk-Vaintrob that relates the Euler pairing to the residue pairing
via the Chern character map. 

The following is an immediate consequence of the commutativity of diagram (\ref{proofdia}) in the proof of Theorem \ref{mainthm2}:

\begin{cor} \label{cor1215}
Let $k$, $Q$, $\fm$, and $f$ be as in the statement of Theorem \ref{mainthm2}, and assume $n = \dm(Q_\fm)$ is even.
Then the triangle
$$
\xymatrix{
  HH_0(mf(Q,f)) \otimes_k  HH_0(mf(Q,-f)) \ar[rr]^-{ \e \otimes_k \e} \ar[dr]
  && \frac{\Omega^n_{Q/k}}{df \smsh \Omega^{n-1}_{Q/k}} \otimes_k
\frac{\Omega^n_{Q/k}}{df \smsh \Omega^{n-1}_{Q/k}}\ar[dl]^-{\langle -,- \rangle_{\on{res}}} \\
& k \\
}
$$
commutes, where the left diagonal map is $(-1)^{n(n+1)/2} \on{trace}\circ (- \star - )$, and $\e$ denotes the composition of the HKR map and the isomorphism
$H_n(\Omega_{Q/k}^\bu, \pm df) \xra{\cong}\frac{\Omega^n_{Q/k}}{df \smsh \Omega^{n-1}_{Q/k}}$.
\end{cor}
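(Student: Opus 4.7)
The plan is to extract this corollary directly from the bottom two rows of diagram~(\ref{proofdia}) in the proof of Theorem~\ref{mainthm2}, skipping the topmost square (which was there to handle the dualization isomorphism $\Psi$). Since the second tensor factor in the source of the triangle is already $HH_0(mf(Q,-f))$, rather than a twist of $HH_0(mf(Q,f))$ by $\Psi$, there is no need to invoke $\Psi$ at all, and one can begin the argument one row below the top of~(\ref{proofdia}).

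Concretely, I would consider the diagram
\begin{equation*}
\xymatrix{
HH_0(mf(Q,f)) \otimes_k HH_0(mf(Q,-f)) \ar[rr]^-{\e \otimes \e} \ar[d]_-{\star} && H_n(\Omega^\bu_{Q/k}, -df) \otimes_k H_n(\Omega^\bu_{Q/k}, df) \ar[d] \\
HH_0(mf^\fm(Q_\fm, 0)) \ar[rr]^-{\e} \ar[dr]_-{(-1)^{n(n+1)/2} \trace} && H_{2n} \R\Gamma_\fm(\Omega^\bu_{Q_\fm/k}) \ar[dl]^-{\res} \\
& k &
}
\end{equation*}
in which the right-hand vertical arrow is the composition of the canonical isomorphism pushing the first factor into $\R\Gamma_\fm$, followed by the K\"unneth map and wedge multiplication of forms. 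The upper square commutes by Corollary~\ref{cor1129} (applied with $Z = W = \{\fm\}$, using that $\Sing(f) = \{\fm\}$ forces every object of $mf(Q, \pm f)$ to be supported at $\fm$), and the lower triangle commutes by Theorem~\ref{thm112}.

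The remaining task is to identify the composition down the right-hand side with the residue pairing $\langle -,-\rangle_{\res}$. By Proposition~\ref{prop1129}, that composition agrees with $\langle -,- \rangle_{\res}$ up to the factor $(-1)^n$ coming from converting between $H_n(\Omega^\bu_{Q/k}, -df)$ and $H_n(\Omega^\bu_{Q/k}, df)$ on the second factor. The hypothesis that $n$ is even is used precisely to kill this sign (it also ensures that $HH_0$ rather than $HH_1$ is the relevant Hochschild group and that $H_{2n} \R\Gamma_\fm$ lands in even $\Z/2$-degree, so that the degrees in the diagram are internally consistent). There is no essential obstacle to overcome once Corollary~\ref{cor1129}, Theorem~\ref{thm112}, and Proposition~\ref{prop1129} are in hand: the corollary simply repackages the bottom half of diagram~(\ref{proofdia}), with the tensor factor from $mf(Q,-f)$ entering ``pre-dualized'' so that both $\Psi$ and its companion sign disappear.
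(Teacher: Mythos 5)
Your proposal is correct and matches the paper's intent, which simply cites diagram~(\ref{proofdia}) as the source. You correctly identify that the corollary is the content of the bottom two rows of that diagram together with Proposition~\ref{prop1129}, and you rightly pin down the role of the hypothesis ``$n$ even'': Proposition~\ref{prop1129} expresses $\langle -,-\rangle_{\on{res}}$ as $(\id\times(-1)^n)$ followed by K\"unneth, wedge, and $\on{res}$, so once one enters the diagram one row down (with the second factor already in $HH_0(mf(Q,-f))$ and $H_n(\Omega^\bu,df)$ rather than being carried there by $\Psi$ and $(-1)^n$), the remaining composition down the right differs from the residue pairing by exactly $(-1)^n$, which vanishes because $n$ is even. (The parenthetical remark that evenness is needed for $H_{2n}\R\Gamma_\fm$ to sit in even $\Z/2$-degree is a small slip---$2n$ is always even---but it is harmless; the real content is the $(-1)^n$ sign and the fact that $HH_0 = HH_n$ when $n$ is even.)
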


Let $X \in mf(Q, f)$. We recall that the \emph{Chern character of $X$}
$$
ch(X) \in HH_0(mf(Q,f)
$$
is the class represented by
$$
\id_X[] \in \End(X) \subseteq HH(mf(Q,f)).
$$
Assume now that $n$ is even. The isomorphism 
$$
\e : HH_0(mf(Q,f)) \xra{\cong} \frac{\Omega^n_{Q/k}}{df \smsh \Omega^{n-1}_{Q/k}}
$$
sends $ch(X)$ to the class
$$
\frac{1}{n!} \str((\delta_X')^n),
$$
where $\delta_X' = [\n, \delta_X]$ for any choice of connection $\n$ on $X$. Abusing notation, we also denote this element of $\frac{\Omega^n_{Q/k}}{df \smsh \Omega^{n-1}_{Q/k}}$ as $ch(X)$. 

For example, if the components of $X$ are free, 
then, upon choosing bases, we may represent $\delta_X$ as a pair of square matrices $(A,B)$ satisfying $AB = f I_r = BA$.  
Using the Levi-Cevita connection associated to this choice of basis, we have
\begin{equation}
\label{freecomp}
ch(X) = \frac{2}{n!} \tr(\overbrace{dA dB \cdots dA dB}^{\text{ $n$ factors}}).
\end{equation}

Recall from Remark \ref{thetaremark} that, for $X \in mf(Q, f)$ and $Y \in mf(Q, -f)$, $\theta(X ,Y)$ is given by 
$$
\dim_k H_0(X \otimes  Y) - \dim_k H_1(X \otimes Y),
$$
and we have
\begin{equation}
\label{thetachern}
\theta(X, Y) = \trace(ch(X) \star ch(Y)).
\end{equation}

\begin{cor} 
\label{PVcor}
Under the assumptions of Corollary \ref{cor1215}, 
\begin{enumerate}
\item If $X \in mf(Q, f)$ and $Y \in mf(Q, -f)$, 
$$
\theta(X, Y) = (-1)^{n \choose 2}\langle ch(X), ch(Y) \rangle_{\on{res}}.
$$
\item If $X, Y \in mf(Q, f)$, 
$$
\chi(X, Y) = (-1)^{n \choose 2}\langle ch(X), ch(Y) \rangle_{\on{res}}.
$$
\end{enumerate}
\end{cor}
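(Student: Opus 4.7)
The key input is Corollary \ref{cor1215}, which gives the commutative triangle
$$
\xymatrix@R=1.5em{
HH_0(mf(Q,f)) \otimes_k HH_0(mf(Q,-f)) \ar[r]^-{\e \otimes \e} \ar[dr]_-{(-1)^{n(n+1)/2}\,\trace\,\circ\,(-\star-)} & \tfrac{\Omega^n_{Q/k}}{df \smsh \Omega^{n-1}_{Q/k}} \otimes_k \tfrac{\Omega^n_{Q/k}}{df \smsh \Omega^{n-1}_{Q/k}} \ar[d]^-{\langle-,-\rangle_{\on{res}}} \\
 & k.
}
$$
Both assertions will follow by feeding Chern characters into this triangle and matching the two diagonals. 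The sign reconciliation is the observation that, for even $n$, one has $n(n+1)/2 \equiv n(n-1)/2 = \binom{n}{2} \pmod 2$, because the two quantities differ by $n$.

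For part (1), plug $\alpha = ch(X)$ and $\beta = ch(Y)$ into Corollary \ref{cor1215}. By equation (\ref{thetachern}), the composition $\trace \circ (-\star-)$ sends $ch(X) \otimes ch(Y)$ to $\theta(X,Y)$, so the left diagonal gives $(-1)^{n(n+1)/2}\theta(X,Y)$. On the other side, $\e(ch(X)) = ch(X)$ and $\e(ch(Y)) = ch(Y)$ by definition of the Chern character in $\Omega^\bu_{Q/k}$, so the right diagonal gives $\langle ch(X), ch(Y)\rangle_{\on{res}}$. Equating the two diagonals and invoking the sign identity above yields (1).

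For part (2), use the canonical isomorphism $\Hom(X,Y) \cong D(X) \otimes Y$ in $mf(Q,0)$, which gives $\chi(X,Y) = \theta(D(X), Y)$. Applying part (1) to the pair $(D(X), Y) \in mf(Q,-f) \times mf(Q,f)$ (which is allowed because part (1) is symmetric in the role of $f$ vs $-f$) reduces the claim to showing that $\e(ch(D(X))) = \e(ch(X))$ inside $\Omega^n_{Q/k}/(df \smsh \Omega^{n-1}_{Q/k})$. I will deduce this from Lemma \ref{lem1129}: the formula for the isomorphism $\Psi$ in (\ref{E1215d}) applied to the length-zero tensor $\id_X[\,]$ gives $\Psi(ch(X)) = \id_X^*[\,] = ch(D(X))$, so Lemma \ref{lem1129} yields $\e(ch(D(X))) = \gamma(\e(ch(X)))$. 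Since $\gamma$ acts on $\Omega^n_{Q/k}$ as multiplication by $(-1)^n$ and $n$ is even, we obtain $\e(ch(D(X))) = \e(ch(X))$, as desired.

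I do not anticipate a serious obstacle here: both parts are essentially direct consequences of results already assembled in the paper, and the only bookkeeping items are the mod-$2$ sign identity $n(n+1)/2 \equiv \binom{n}{2}$ for even $n$ and the identification $\Psi(ch(X)) = ch(D(X))$, both of which are immediate from the relevant formulas.
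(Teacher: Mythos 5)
Your proof of part (1) matches the paper's exactly: feed Chern characters into Corollary \ref{cor1215}, use \eqref{thetachern}, and reconcile the sign via $n(n+1)/2 \equiv \binom{n}{2} \pmod 2$ for $n$ even. That part is fine.

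For part (2), you take a genuinely different route from the paper. The paper introduces the auxiliary functor $N$ (sending $(\d_1,\d_0)$ to $(\d_1,-\d_0)$), reduces to the local case to get a matrix description, and invokes two external results (\cite[Corollary 8.5]{BW2} and \cite[Proposition 3.18]{BMTW}) to identify $(-1)^{\binom{n}{2}}\theta(X, N(Y))$ with $\chi(X,Y)$; it then separately computes $ch(N(Y)) = (-1)^{n/2} ch(Y)$ from \eqref{freecomp}. Your route instead uses the duality $D$ directly: $\chi(X,Y) = \theta(D(X),Y)$ via the stated isomorphism $\Hom(X,Y) \cong D(X) \otimes Y$, applies part (1) to the pair $(D(X),Y)$, and shows $\e(ch(D(X))) = \e(ch(X))$ from Lemma \ref{lem1129} together with $\Psi(\id_X[\,]) = \id_{D(X)}[\,]$ and the fact that $\gamma$ acts by $(-1)^n = 1$. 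This is arguably a cleaner proof: it avoids the external citations, the reduction to $Q$ local, and the matrix computation, staying entirely within the machinery already developed in the paper. One small imprecision in your write-up: when you apply part (1) to $(D(X),Y) \in mf(Q,-f) \times mf(Q,f)$, the statement of part (1) has the inputs in the opposite order; the clean justification is that both $\theta$ (tensor product is symmetric up to isomorphism of $\Z/2$-complexes) and the residue pairing ($gh = hg$) are symmetric, so applying part (1) as stated to $(Y, D(X))$ and then swapping gives what you need. Alternatively, applying part (1) with $f$ replaced by $-f$ works, since the residue pairing changes by $(-1)^n = 1$ under this replacement — but either way, a one-line justification beyond "it is allowed" would strengthen the argument.
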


\begin{rem}
Corollary \ref{PVcor} (2) is Polishchuk-Vaintrob's Hirzebruch-Riemann-Roch formula for matrix factorizations (\cite[Theorem 4.1.4(i)]{PV}).
\end{rem}

\begin{proof} 
(1) is immediate from Corollary \ref{cor1215} and (\ref{thetachern}). We now prove (2). Without loss of generality, we may assume $Q$ is local, so that the underlying $\Z/2$-graded $Q$-modules of $X$ and $Y$ are free. Given a matrix factorization $(P, \d_P) \in mf(Q ,f)$ written in terms of its $\Z/2$-graded components as 
$$
(\d_1 : P_1 \to P_0, \d_0 : P_0 \to P_1),
$$
we define a matrix factorization $N(P, \d_P) \in mf(Q, -f)$ with components 
$$
(\d_1 : P_1 \to P_0, - \d_0 : P_0 \to P_1).
$$

We have
$$
\langle ch(X), ch(N(Y)) \rangle_{\on{res}} =  (-1)^{{n \choose 2}} \theta(X, N(Y)) =  \chi(X, Y).
$$
The first equality follows from (1), and the second equality follows from \cite[Corollary 8.5]{BW2} and 
 \cite[Proposition 3.18]{BMTW}; note that $(-1)^{{n \choose 2}} = (-1)^{\frac{n}{2}}$, since $n$ is even, and also that the notation $\chi$ in \cite[Proposition 3.18]{BMTW} has a different meaning than it does here. It suffices to show $ch(N(Y)) = (-1)^{\frac{n}{2}}ch(Y)$, and this is clear by (\ref{freecomp}).
\end{proof}

%%%%%%%%%%%%%%%%%%%%%%%%
%%%%%%%%%%%%%%%%%%%%%%%%

\bibliographystyle{amsalpha}
\bibliography{Bibliography}

\end{document}